%% file: Main/main.tex
\documentclass[11pt]{article}
\usepackage{macros}
\usepackage{xurl}

\bibliography{references}

\usepackage{subfiles}

\title{
    \vspace{-1.2cm}
    Cusp Excursions, Lattice Points on Manifolds, and the Mizohata-Takeuchi Conjecture
}
\author{Inbo Gottlieb Fenves}
\date{June 22, 2026}

\begin{document}
\maketitle
\vspace{-1cm}
\begin{abstract}
    \scriptsize{We prove new logarithm laws for cusp excursions in spaces of lattices, and produce quantitative lower bounds for lattice points near submanifolds, using tools from dynamics and the geometry of numbers. As an application, we provide a new proof of power loss for the local Mizohata-Takeuchi conjecture as in \cite{CaiZha} with explicit error terms, as well as show that power loss is generic in $C^k$. The construction uses high-dimensional probabilistic estimates, but replaces the random orthogonal subspaces of \cite{CaiZha} with random unimodular lattices; this yields stronger bounds and provides a richer family of counterexamples.}
\end{abstract}

\setcounter{tocdepth}{1}

{\footnotesize
\begin{spacing}{0.2}
    \tableofcontents
\end{spacing}
}

\subfile{Sections/1-intro}

\subfile{Sections/2-reduction}

\subfile{Sections/3-latticegeometry}

\subfile{Sections/4-pointsnearmanifolds}

\subfile{Sections/5-maximalfunctions}

\subfile{Sections/6-proofofmt}

\subfile{Sections/7-genericity}

\appendix

\subfile{Sections/8-bookkeeping}

\subfile{Sections/9-cuspvolumes}

\nocite{*}
\printbibliography[ heading=bibintoc, title={References}]
\end{document}

%% file: Sections/1-intro.tex
\section{Introduction}

Fix $n \ge 2$ and let $\Sigma \subseteq \R^n$ be a compact $C^2$ hypersurface with surface measure $\dsigma$. We define the \textit{Fourier extension operator} $\Ec_\Sigma$ on functions $f \colon \Sigma \to \C$ by
\begin{equation*}
    \Ec_\Sigma f(x) = (f\dsigma)^\vee(x) = \int_{\Sigma} e^{2\pi i x \cdot y}f(y)\dsigma(y).
\end{equation*}
We also let $X$ denote the \textit{X-ray transform}, defined for $w \colon \R^n \to \C$ and an affine line $\ell \in \AGr_1(\R^n)$ by
\begin{equation*}
    Xw(\ell) = \int_\ell w.
\end{equation*}
The \textit{Mizohata-Takeuchi conjecture} predicts the behavior of $\Ec_\Sigma$ as a weighted $L^2$ operator.

\begin{conj}[Mizohata-Takeuchi]
    If $\Sigma$ is convex, then for all $f \in L^2(\Sigma)$ and measurable weights $w \colon \R^n \to [0,\infty)$,
    \begin{equation*}
        \norm{\Ec_\Sigma f}_{L^2(\R^n,w\dx)}^2 \lesssim \norm{X w}_{L^\infty(\AGr_1(\R^n))}\norm{f}_{L^2(\Sigma)}^2.
    \end{equation*}
\end{conj}

This conjecture was recently disproven by Cairo in the breakthrough paper \cite{Cai}, and it was later shown by Cairo-Zhang in \cite{CaiZha} that any surface may be perturbed so that a local form of Mizohata-Takeuchi fails with power loss; for $k=2$ this bound is sharp up to the endpoint in view of the results of Carbery-Iliopoulou-Wang in \cite{CarIliWan}. 

Our first result is a new proof of \cite{CaiZha}, with an explicit error term.

\begin{thm}\label{thm:LocalMTFailsalways}
    Let $\Sigma$ be a $C^k$-smooth compact hypersurface in $\R^n$. Then there exist arbitrarily small $C^k$ perturbations $\Sigma'$ of $\Sigma$ so that for all $R \gg 1$, there exist $f \in L^2(\Sigma')$ and weights $w$ supported on $\B_R^n$ for which
    \begin{equation*}
        R^{\frac{n-1}{n-1+k}}\norm{Xw}_{L^\infty(\AGr_1(\R^n))}\norm{f}_{L^2(\Sigma')}^2 \norm{\Ec_{\Sigma'} f}_{L^2(\R^n,w\dx)}^{-2} \lesssim \exp\left(O(\sqrt{\log R \log\log R})\right).
    \end{equation*}
\end{thm}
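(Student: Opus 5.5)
We aim to reduce Theorem~\ref{thm:LocalMTFailsalways} to a lattice-point counting problem near the perturbed surface, in the spirit of \cite{Cai,CaiZha}. The construction uses a random unimodular lattice in place of random orthogonal subspaces. The governing mechanism is constructive interference of the extension operator. Suppose $f\dsigma$ is a sum of $N$ small caps centered at points $\xi_j \in \Sigma'$ that lie within $R^{-1}$ of a translate of a dilated lattice $\lambda\Lambda$. Then $\Ec_{\Sigma'}f$ is essentially periodic on $\B_R^n$ with respect to the dual lattice, with peaks of size $\sim N$ in neighborhoods of the dual lattice points. We take $w$ to be the indicator of these peak neighborhoods. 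The left side then behaves like $N^2\cdot|\supp w|$ against $N\,\norm{Xw}_\infty$. So the gain is $N$ divided by the maximal number of peak cells a line in $\B_R^n$ can meet. This first reduction, in Section~2's spirit, is routine bookkeeping with wave packets and Gaussian-type caps.

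The steps, in order, are as follows.
\begin{enumerate}
\item Fix a $C^k$ perturbation $\Sigma'$. Choose the scale $\lambda$ and the cap radius $\delta \sim R^{-1}$. We need a unimodular lattice $\Lambda$ for which $\#\{v\in\lambda\Lambda : \mathrm{dist}(v,\Sigma')\le \delta\}$ is as large as the heuristic volume count, namely $N \approx \lambda^{n}\cdot \delta\cdot \lambda^{-1}$ up to scaling.
\item Control the X-ray side. The dual lattice $\Lambda^*$ must not contain short vectors, since short vectors would let a line pass through too many peak cells. This is a cusp-excursion condition: the shortest vector of $g\Lambda^*$ must be $\gtrsim$ its typical size along the relevant orbit, up to a loss $\exp(O(\sqrt{\log R\log\log R}))$. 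Here I would invoke the logarithm laws of Section~3, in quantitative, Borel--Cantelli form over dyadic scales $R$.
\item Obtain the lattice count. Averaging over the Haar measure on $\mathrm{SL}_n(\R)/\mathrm{SL}_n(\Z)$ via Siegel's mean value theorem gives the expected count. A second-moment (Rogers) bound then gives concentration, and this is the content of Section~4. Both good events then hold simultaneously for a positive-measure set of $\Lambda$.
\item Balance the parameters. The perturbation must be $C^k$-small, so $\Sigma'$ can be flattened or bumped only at scale $\rho$ with height $\rho^k$. Optimizing $\rho$ against $R$ should give the exponent $\frac{n-1}{n-1+k}$. Concretely, the perturbation is a sum of $C^k$ bumps forcing $\Sigma'$ to pass near many lattice points across a multiscale family, so that a single $\Sigma'$ works for all $R\gg1$ simultaneously.
\end{enumerate}

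The main obstacle is expected to be the uniformity in $R$. One must build one perturbation $\Sigma'$ valid for every large $R$, while only losing $\exp(O(\sqrt{\log R\log\log R}))$. The loss should come from summing the failure probabilities over the roughly $\log R$ scales and choosing excursion thresholds of size $e^{\sqrt{\log R \log\log R}}$, as in a logarithm law. I would first try a telescoping construction. Perturbations at dyadic scales $R_m$ would have geometrically decaying $C^k$ norms and disjoint supports in frequency location. Each scale would use a lattice drawn independently from the good set, and a union bound would use the quantitative cusp estimates. A maximal-function bound as in Section~5 would handle intermediate $R$ between the dyadic scales.
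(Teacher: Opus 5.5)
There is a genuine gap at the core of your construction. You sample lattices from $\mathrm{SL}_n(\R)/\mathrm{SL}_n(\Z)$, i.e.\ you stay in $\R^n$, and there no cusp estimate can control the X-ray side.

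In the normalization that makes $\|w\|_{L^1}\sim R^{n-1}$, the lattice carrying the peak cells has covolume $\sim R$. By Minkowski's theorem \emph{every} such lattice has a nonzero vector of length $\lesssim R^{1/n}$, and the line through it meets $\gtrsim R^{1-1/n}$ peak cells. The number of caps your perturbation can force through is of order $R^{\frac{n-1}{n-1+k}}$. Since $k\ge 2$, we have $\frac{n-1}{n-1+k}<1-\frac{1}{n}$, so your gain (caps divided by cells per line) is a negative power of $R$. This is deterministic: the ``typical'' shortest vector in your Step~2 is already too short, so no choice of good event helps. The paper flags exactly this obstruction in its sketch.

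The missing idea is to lift to higher dimension:
\begin{itemize}
\item Embed $\R^n\hookrightarrow\R^N$ with ambient dimension $N\gg n$ (here $N$ is a dimension, not your cap count), and take $\Lambda$ Haar-random in $\mathrm{SL}_N(\R)/\mathrm{SL}_N(\Z)$.
\item Place the caps at points of the projection $\pi(R^{-1/N}\Lambda\cap\B_{\rho_0}^N)$. Your Step~3 must then be run in $\R^N$ on the thickened sectors $\Sigma_\psi(\epsilon,\theta)\times\B_{\rho_0}^{N-n}$, where Siegel and Rogers still give $\gtrsim\epsilon R^{\frac{n-1}{n-1+k}}$ hit sectors.
\item Build $w$ from bumps on $R^{1/N}\Lambda^\vee$ restricted to the slice $\iota\R^n$.
\end{itemize}
A line in $\R^n$ then sees only the dual points in a tube of length $R^{1-1/N}$ and radius $R^{-1/N}$ in $\R^N$. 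The unavoidable worst-case count drops from $R^{1-1/n}$ to $R^{(n-1)/N}$.

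Your Step~2 must also change. What needs bounding is not a shortest vector, since short dual vectors are harmless unless they lie near $\iota\R^n$. It is the number of dual points in every tube around a line of $\iota\R^n$, i.e.\ $\sup_{h}\alpha(g^t h\Lambda^\vee)$ over $h\in\mathrm{SO}(n)\times\mathrm{Id}_{N-n}$. Here $g^t$ is the diagonal flow carrying these tubes to balls, with $e^t=R$. A logarithm law along a single orbit, or Borel--Cantelli over dyadic scales for one fixed lattice, does not control this supremum over directions. The paper proves it at a single scale by combining:
\begin{itemize}
\item a net of $\lesssim R^{n-1}$ rotated ellipsoids covering all rotations;
\item $G$-invariance of $\mu$ and a union bound;
\item the cusp volume $\mu(\{\alpha>A\})\asymp A^{-N}$.
\end{itemize}
Together these show the supremum exceeds $TR^{(n-1)/N}$ with probability $\lesssim T^{-N}$. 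The paper also shows $R^{(n-1)/N}$ is a deterministic lower bound, so this loss is intrinsic.

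Your explanation of the error term is also off. The factor $\exp(O(\sqrt{\log R\log\log R}))$ comes from letting $N$ grow with $R$. Every constant in the argument (the quantitative cusp volume, the comparison $\alpha(\Lambda)\asymp|\Lambda\cap\B_1^N|$, net and covering constants) is $N^{O(N)}$. So the total loss $N^{O(N)}R^{(n-1)/N}$ is optimized at $N^2\log N\sim\log R$. It does not come from summing failure probabilities over $\log R$ scales or from tuning excursion thresholds.

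Finally, uniformity in $R$ is not where the difficulty lies. The paper handles it with the Cairo--Zhang reduction: duality gives $D_\Sigma(R)\lesssim C_\Sigma(R)\lesssim \log R\, D_\Sigma(R)$, and single-scale perturbations are then patched together. This is close in spirit to your telescoping plan.
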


Here, $\B_R^n$ denotes the ball in $\R^n$ of radius $R$ about the origin. This improves on Cairo-Zhang, who obtain an error of the shape $\lesssim_\delta R^\delta$.

Our second result is that power loss in the Mizohata-Takeuchi conjecture is in fact \textit{generic} in the space of $C^k$-submanifolds. For simplicity, we consider the space of graphs of the form
\begin{equation*}
    \Sigma_\psi = \left\{ (\xi,\psi(\xi)) : \xi \in [0,1]^{n-1}\right\},
\end{equation*}
for some $\psi \in C^k([0,1]^{n-1})$.

\begin{thm}\label{thm:localMTgenericallyfails}
    Fix $\delta > 0$. Then the set of $\psi \in C^k$ for which there exist arbitrarily large $R \ge 1$, functions $f \in L^2(\Sigma_\psi)$ and weights $w \colon \B_R^n \to [0,\infty)$ satisfying
    \begin{equation*}
        \norm{\Ec_{\Sigma_\psi}f}_{L^2(\R^n,w\dx)}^2 \gtrsim R^{\frac{n-1}{n-1+k}}R^{-\delta}\norm{Xw}_{L^\infty}\norm{f}_{L^2(\Sigma_\psi)}^2
    \end{equation*}
    contains a dense $G_\delta$-set in the $C^k$-topology.
\end{thm}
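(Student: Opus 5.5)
The argument is a Baire category one. Everything rests on an open-dense statement at each fixed scale, built from the quantitative construction behind Theorem \ref{thm:LocalMTFailsalways}.

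For $N \in \mathbb{N}$, let $U_N$ be the set of $\psi \in C^k([0,1]^{n-1})$ for which there exist $R \ge N$, $f \in L^2(\Sigma_\psi)$ and $w$ supported on $\B_R^n$ with
\begin{equation*}
    \norm{\Ec_{\Sigma_\psi}f}_{L^2(w)}^2 > c\, R^{\frac{n-1}{n-1+k}}R^{-\delta}\norm{Xw}_{L^\infty}\norm{f}_{L^2(\Sigma_\psi)}^2 ,
\end{equation*}
where $c>0$ is a fixed constant. The intersection $\bigcap_N U_N$ is exactly the set in the statement. Since $C^k$ is a complete metric space, it suffices to show that each $U_N$ contains an open dense set.

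\textbf{Openness.} Fix a witness $(R,f,w)$ for $\psi$. We may assume $f$ is a bounded function of the base variable $\xi$, transplanted to nearby graphs via $\xi$, and that $w$ is bounded with compact support in $\B_R^n$. The quantity $\Ec_{\Sigma_\psi}f(x)=\int e^{2\pi i (x'\cdot\xi + x_n\psi(\xi))}f(\xi)\sqrt{1+|\nabla\psi|^2}\,d\xi$ then depends continuously on $\psi$ in $C^1$, uniformly for $|x|\le R$. The norm $\norm{f}_{L^2(\Sigma_\psi)}$ is likewise continuous, and $w$ does not change. So the strict inequality persists on a $C^k$-neighborhood of $\psi$. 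Because $R$ is fixed, this costs nothing.

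\textbf{Density.} This is the main step. Given $\psi_0$ and $\varepsilon>0$, I would first approximate $\psi_0$ by a smooth $\psi_1$. I would then apply the perturbation construction underlying Theorem \ref{thm:LocalMTFailsalways} to $\Sigma_{\psi_1}$, inside a small ball of $C^k$-radius $\varepsilon$. That construction produces a perturbation $\Sigma'$ together with witnesses at every $R\gg1$ whose loss is $\exp(O(\sqrt{\log R\log\log R}))$. This loss is $\le R^{\delta}$ once $R$ is large, so in particular some $R \ge N$ works. This is the step I expect to be delicate. One has to check that Theorem \ref{thm:LocalMTFailsalways} really provides perturbations of arbitrarily small $C^k$ size \emph{for graphs}, so that $\Sigma'=\Sigma_{\psi'}$ with $\norm{\psi'-\psi_0}_{C^k}<2\varepsilon$. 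One also has to check that only a single scale $R\ge N$ is needed here, so a perturbation adapted to that one scale suffices. The natural choice is a bump-localized perturbation of amplitude about $R^{-k/(n-1+k)}$ on caps of width about $R^{-1/(n-1+k)}$, with positions dictated by a random unimodular lattice. Its $C^k$ norm is $O(1)$ times a small prefactor. If necessary I would localize it to a tiny sub-cap of $[0,1]^{n-1}$, which only affects constants and not the exponent. Granting this, $\psi'\in U_N$ and $U_N$ is dense.

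Finally, I would take the open dense sets $V_N\subseteq U_N$ produced above and set $G=\bigcap_N V_N$. By the Baire category theorem, $G$ is a dense $G_\delta$ set. Every $\psi\in G$ admits witnesses at arbitrarily large $R$, which proves the theorem.
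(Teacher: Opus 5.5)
Your argument is correct and has the same Baire-category skeleton as the paper's proof, but your open sets come from a different mechanism.

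The paper never argues openness abstractly. It builds openness into Proposition~\ref{prop:mainproposition}(2)(iii), whose lower bound persists for every graph within $\ll R^{-1}$ of $\psi+\eta_\Lambda$ in $C^k$. So $V_{\epsilon,R}$ is a union of $R^{-1}$-balls around an $\epsilon$-dense family of perturbed graphs. The factor $\epsilon$ lost in (iii) is then absorbed by coupling the scales, $R_k \gg \epsilon_k^{-A}$ with $A>2\delta^{-1}$, inside $\bigcup_{k\ge K}V_{\epsilon_k,R_k}$.

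You instead define $U_N$ intrinsically and get openness from continuity of $\psi\mapsto \Ec_{\Sigma_\psi}f$ on $\B_R$ at fixed $R$. This is softer and uses nothing about the lattice weights. Because you choose $R$ after $\epsilon$, it also needs no coupling of scales. The paper's version, in exchange, gives explicit neighborhood sizes.

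For density, the two points you flag as delicate disappear if you cite the single-scale Proposition~\ref{prop:mainproposition} instead of Theorem~\ref{thm:LocalMTFailsalways}. Use it with lattice dimension $N\ge 2n\delta^{-1}$ (unrelated to your index $N$). The perturbations are then graphs $\psi+\eta_\Lambda$ by construction, and you avoid both the Cairo--Zhang patching and the $N$-dependent bookkeeping behind the $\exp(O(\sqrt{\log R\log\log R}))$ error. The only extra step is the elementary duality $\norm{\hat w}_{L^2(\Sigma_\psi)}^4\le\norm{w}_{L^1}\norm{\Ec_{\Sigma_\psi}f}_{L^2(w\dx)}^2$ with $f=\hat w|_{\Sigma_\psi}$, which converts the $\hat w$ lower bound into the stated one for $\Ec_{\Sigma_\psi}f$.
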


Theorems \ref{thm:LocalMTFailsalways} and \ref{thm:localMTgenericallyfails} can be approached probabilistically, where the weights $w_R$ are chosen to approximate random lattices in $\R^N$ for some $N \gg n$. Cairo-Zhang use randomly distributed \textit{orthogonal} lattices\footnote{\cite{CaiZha} actually use the dual perspective of random orthogonal \textit{subspaces}, with a fixed lattice; moreover, they require an additional random dilation.} and high-dimensional incidence estimates for convex sets. We instead construct weights associated to random \textit{unimodular} lattices, and use moments of Siegel transforms and cusp estimates to establish our bounds. We establish the precise error term of Theorem \ref{thm:LocalMTFailsalways} in Appendices \ref{section:bookkeeping} and \ref{section:cuspvolumes}.

Along the way, we prove several results which may be of independent interest. The first is a lower bound for the number of lattice points near a submanifold, possibly contained in a low-dimensional hyperplane. Suppose $N \ge n$, $\iota \colon \R^n \hookrightarrow \R^N$ is an isometric linear embedding with transpose $\pi = {}^t\iota$, and $\Sigma \subseteq \R^n$ is an $m$-dimensional compact manifold.

Let $G = \SL_N(\R)$, $\Gamma = \SL_N(\Z)$. The quotient space $G/\Gamma$ is equipped with a finite volume form, which we normalize to be a probability measure $\mu$. We may identify $G/\Gamma$ with the space $\lattice$ of unimodular lattices in $\R^N$.

\begin{thm}\label{thm:countingnearmanifolds}
    Suppose that $N \ge 3$. Let $\Sigma \subseteq \B_1^n$ be an $m$-dimensional compact $C^k$ submanifold. Then for all $\epsilon > 0$ and all $R \gg_\epsilon 1$, the set
    \begin{equation*}
        \left\{ \vbf \in \pi(R^{-1/n}\Lambda \cap \B_1^N) : \dist(\vbf,\Sigma) < \epsilon R^{-\frac{k}{m+k(n-m)}} \right\}
    \end{equation*}
    contains $\gtrsim \epsilon^{n-m} R^{\frac{m}{m+k(n-m)}}$ points which are $\gtrsim R^{-\frac{1}{m+k(n-m)}}$-separated, for a set of $\Lambda \in \lattice$ of $\mu$-measure at least $\frac{1}{100}$.
\end{thm}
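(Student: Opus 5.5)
We would prove Theorem \ref{thm:countingnearmanifolds} by a second moment (Paley--Zygmund) argument for a Siegel transform, after reducing to a discrete family of disjoint ``boxes'' adapted to $\Sigma$. Set $\delta = R^{-\frac{1}{m+k(n-m)}}$. Since $\delta^{k} = R^{-\frac{k}{m+k(n-m)}}$ and the number of $\delta$-balls needed to cover $\Sigma$ is $\asymp \delta^{-m} = R^{\frac{m}{m+k(n-m)}}$, the statement asks for roughly one lattice point per $\delta$-cap, each cap fattened normally to thickness $\epsilon\delta^k$. The plan is: choose a maximal $C\delta$-separated set $\{\xbf_j\}_{j=1}^J \subseteq \Sigma$ with $J \asymp \delta^{-m}$. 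For each $j$, let $T_j \subseteq \R^n$ be the slab consisting of points whose projection to the tangent plane $T_{\xbf_j}\Sigma$ lies within $\delta$ of $\xbf_j$ and whose normal coordinates lie in a ball of radius $\tfrac12\epsilon\delta^k$. By Taylor's theorem in $C^k$, a point of $T_j$ is within $\epsilon\delta^k$ of $\Sigma$ only if we shrink the tangential radius to $c\delta$ with $c$ small depending on the $C^k$ norm, which we do. (Taylor only gives a $C^2$ error $\delta^2$; to get $\delta^k$ we should instead take $T_j$ to be the $\epsilon\delta^k$-neighbourhood of the graph piece $\Sigma\cap B(\xbf_j,c\delta)$ itself, i.e.\ a curved tube, and only use the flat-slab picture to compute volumes.) Then the $T_j$ are disjoint, and $|T_j| \asymp_c \epsilon^{n-m}\delta^{m+k(n-m)} = \epsilon^{n-m}R^{-1}$. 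The sets $T_j$ for different $j$ are separated by $\gtrsim\delta$, so one point from each occupied $T_j$ gives the separated family.

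Pull back to $\R^N$ by setting $S_j = \pi^{-1}(T_j)\cap \B_1^N$ and $\tilde S_j = R^{1/n} S_j$. Then $\vbf \in \pi(R^{-1/n}\Lambda\cap\B_1^N)\cap T_j$ exactly when $\Lambda\cap\tilde S_j \neq \emptyset$. Since $\Sigma\subseteq\B_1^n$ lies well inside, $\pi^{-1}(T_j)\cap\B_1^N$ has volume $\asymp |T_j|$ (fibers are $(N-n)$-balls of radius $\asymp 1$, after shrinking to $\Sigma\subseteq\B_{1/2}^n$ by rescaling if necessary). Hence $|\tilde S_j| \asymp R^{N/n}\epsilon^{n-m}R^{-1}$. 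This is off, because the normalization $R^{-1/n}\Lambda$ should give about one point per box. I therefore expect the intended scaling to make $\mathrm{vol}(\tilde S_j)\asymp \epsilon^{n-m}$, and I will calibrate the dilation so that $\E[\#(\Lambda\cap\tilde S_j)] \asymp \epsilon^{n-m}$, which is bounded by a constant. Let $F(\Lambda) = \sum_j \mathbf 1[\Lambda\cap\tilde S_j\ne\emptyset]$.

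The first moment comes from Siegel's mean value theorem applied to $\widehat{\mathbf 1}_{\tilde S_j}$, giving $\E\#(\Lambda\cap\tilde S_j) = |\tilde S_j|$. To lower bound $\E F$, we bound $\E[\#\mathbf 1]\ge \E\# - \E\binom{\#}{2}$, or more simply use $\mathbf 1[\#\ge1]\ge \#-\#(\#-1)/2$. The pair-count term is controlled by Rogers' second moment formula, valid for $N\ge3$: the primitive pairs contribute $|\tilde S_j|^2$, and the collinear pairs $\Lambda\ni v,\,qv$ contribute $\lesssim |\tilde S_j|$ times the measure of the set of rational multiples staying in $\tilde S_j$. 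That last term is small because $\tilde S_j$ is a thin slab far from the origin and convex in the fiber directions, so multiples $qv$ with $q\neq1$ exit it; we may also discard boxes near the origin. This yields $\E F \gtrsim J\epsilon^{n-m}$. For the upper bound on $\E F^2$, we use Rogers again: $\E F^2 \le \E(\sum_j \#_j)^2 = \E\,\widehat{\mathbf 1}_{\cup\tilde S_j}^{\,2} \le |\cup\tilde S_j|^2 + O(|\cup\tilde S_j|)$ plus a collinear contribution. Paley--Zygmund then gives $F\ge\frac12\E F$ with probability at least $\frac{(\E F)^2}{4\E F^2}$, which is at least $\frac1{100}$ once $J\epsilon^{n-m}\gg1$. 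That holds for $R\gg_\epsilon1$.

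The main obstacle is the collinear/degenerate term in Rogers' formula for the union $\bigcup_j\tilde S_j$. Points $v$ and $qv$ with $q=a/b$ rational may land in different boxes $\tilde S_j,\tilde S_{j'}$, because $\Sigma$ can meet rays through the origin many times, or even lie in a hyperplane through the origin after projection. We would first try bounding that term by $\sum_{q}\int \mathbf 1_{U}(v)\mathbf 1_U(qv)\,dv$ with $U=\cup\tilde S_j$, using that $U$ lies in the shell $|v|\in[c,1]R^{1/n}$ after removing a neighbourhood of the origin. Then only $q\in[c,c^{-1}]$ of bounded height matter, and each contributes $\lesssim|U|$. Hence the correction is $O(|U|)=O(J\epsilon^{n-m})$, which is lower order relative to $(\E F)^2$. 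If $\Sigma$ passes through $0$, we first translate or discard an $O(1)$-fraction of the caps.
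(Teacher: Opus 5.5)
Your outline matches the paper's proof in all essentials. You use disjoint thickened caps of volume $\asymp\epsilon^{n-m}R^{-1}$, lifted to $\R^N$ and dilated so that each has volume $\asymp\epsilon^{n-m}$. Your recalibration is exactly what Theorem~\ref{thm:countingpointsnge3} does, working with $R^{-1/N}\Lambda$ and fibres $\B_{\rho_0}^{N-n}$. You then use Siegel's formula for the first moment, Rogers' formula for the second, and Paley--Zygmund on the number of occupied caps, dominated by the total count.

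The real divergence is that you count all nonzero lattice vectors rather than primitive ones, and your handling of the resulting collinear term is wrong as written. For the full count, Rogers' formula produces the terms $\int_{\R^N}\mathbf{1}_U(kx)\mathbf{1}_U(\pm lx)\,dx=k^{-N}\int_{\R^N}\mathbf{1}_U(v)\mathbf{1}_U(\pm\tfrac{l}{k}v)\,dv$ over coprime $k,l\ge1$. Your unweighted sum $\sum_q\int\mathbf{1}_U(v)\mathbf{1}_U(qv)\,dv$ diverges, since every rational $q$ close to $1$ contributes roughly $|U|$. It is also not true that only rationals of bounded height matter, with or without the shell.

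The correct estimate for the union is easy and needs no shell. Each coprime pair contributes at most $2\max(k,l)^{-N}|U|$, and at most $2M$ pairs have $\max(k,l)=M$. So the collinear part is at most $4\zeta(N-1)|U|$ for any bounded $U$, which is finite precisely because $N\ge3$.

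The paper avoids the issue altogether by using the primitive Siegel transform, for which the only dependent pairs are $\pm v$. Rogers' formula then gives $\Var(X)\le 2\E[X]$ (Corollary~\ref{cor:variancebound}). Through $\P(X_\theta>0)\ge \E[X_\theta]^2/\E[X_\theta^2]$ it also gives the per-cap lower bound. So neither your Bonferroni step nor the removal of caps near the origin is needed.
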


Note that this theorem holds even when $N=n$, and all implied constants can be made explicit. Moreover, the argument given here in fact shows that this probability may be made arbitrarily close to 1. A corresponding result is also true for $n=N=2$, $m=1$, with weaker constants.

The second involves cusp excursions of random lattices in $\lattice$ permitted to rotate around a positive codimension subspace. To state it, let $\alpha(\Lambda)$ denote the inverse of the smallest covolume of any dimension $1 \le p \le N-1$ sublattice $\Delta \le \Lambda$; this is the \textit{height function} of $\lattice$. Consider the subgroup $H = \SO(n) \times \Id_{N-n} \le \SL_N(\R)$. Choose some $\zbf = (z_1,\dots,z_N)$ with $z_i \ge z_{i+1}$ and $\sum_i z_i = 0$, and let $g^t = \exp(-t\zbf)$ be the corresponding one-parameter subgroup of $\SL_N(\R)$. The \textit{maximal height function} $\Max_\zbf^t \colon \lattice \to [0,\infty)$ is given by
\begin{equation*}
    \Max_\zbf^t(\Lambda) = \sup_{h \in H} \alpha(g^th\Lambda).
\end{equation*}
The function $\Max_\zbf^t$ measures how far into the cusp the lattice $\Lambda$ goes under the $g^t$-flow, when one is first allowed to rotate the lattice along the subspace $\R^n \subseteq \R^N$.

\begin{thm}\label{thm:maximalestimates}
    For a.e. $\Lambda \in \lattice$, we have
    \begin{equation*}
        \frac{\rho_\zbf}{N} \le \liminf_{t \to \infty} \frac{1}{t}\log \Max_\zbf^t(\Lambda) \le \limsup_{t \to \infty} \frac{1}{t}\log \Max_\zbf^t(\Lambda) \le \frac{\delta_\zbf}{N},
    \end{equation*}
    where
    \begin{equation*}
        \rho_\zbf = nz_1 - \sum_{i=1}^n z_i \quad \mathrm{and} \quad \delta_\zbf = \sum_{1 \le i < j \le n} (z_i-z_j).
    \end{equation*}
\end{thm}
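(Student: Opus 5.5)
The plan is to prove the two inequalities separately. The lower bound comes from a deterministic geometry-of-numbers argument (Minkowski's theorem applied after a suitable rotation in $H$). The upper bound comes from a covering argument over $H$ combined with a cusp-volume tail estimate and Borel--Cantelli. Throughout, $g^t$ multiplies the $i$-th coordinate by $e^{-tz_i}$, so the first coordinate is the most contracted.

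\emph{Lower bound.} Fix $t$ and $r>0$, and consider the symmetric convex body
\begin{equation*}
    K_{t,r} = \left\{ \vbf \in \R^N : \abs{\pi \vbf} \le r,\ \abs{v_i} \le e^{-t(z_1-z_i)} r \text{ for } i > n \right\}.
\end{equation*}
Its volume is $\asymp r^N e^{-t\sum_{i>n}(z_1-z_i)}$. Using $\sum_i z_i = 0$, we have
\begin{equation*}
    \sum_{i>n}(z_1-z_i) = Nz_1 - \rho_\zbf .
\end{equation*}
Choose $r = C e^{t(z_1-\rho_\zbf/N)}$ with $C$ a large absolute constant. Minkowski's theorem then gives, for \emph{every} $\Lambda$, a nonzero $\vbf \in \Lambda \cap K_{t,r}$. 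Pick $h \in H$ rotating $\pi\vbf$ onto the $e_1$-axis; $h$ does not change the coordinates $i>n$. Then every coordinate of $g^t h\vbf$ has size $\lesssim e^{-tz_1} r \asymp e^{-t\rho_\zbf/N}$. Hence $\alpha(g^th\Lambda) \gtrsim e^{t\rho_\zbf/N}$, and taking logarithms gives the $\liminf$ bound for all $\Lambda$, not merely almost every $\Lambda$.

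\emph{Upper bound.} By the Iwasawa/Bruhat structure of $\SO(n)$, I would cover $H$ by $\lesssim e^{t(\delta_\zbf+\epsilon)}$ sets of the form $B h_j$. Here $B$ is a ``Bowen box'' adapted to the root spaces $\mathfrak{so}(n) \cap (\mathfrak g_{ij}\oplus\mathfrak g_{ji})$, $i<j\le n$: it has width $e^{-t(z_i-z_j)}$ in the direction that $\Ad(g^t)$ expands by $e^{t(z_i-z_j)}$. With this choice, $g^t B g^{-t}$ lies in a fixed compact neighbourhood $\mathcal{O}$ of the identity in $G$. The number of boxes is about $\prod_{i<j\le n} e^{t(z_i-z_j)} = e^{t\delta_\zbf}$, since the volume of $B$ is essentially the product of its widths; the $\epsilon$ absorbs the boundary effects.

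Because $\alpha(u\Lambda) \asymp_{\mathcal{O}} \alpha(\Lambda)$ for $u \in \mathcal{O}$, we get $\Max_\zbf^t(\Lambda) \lesssim \max_j \alpha(g^th_j\Lambda)$. The measure $\mu$ is $G$-invariant, so a union bound gives
\begin{equation*}
    \mu\left(\Max_\zbf^t > T\right) \lesssim e^{t(\delta_\zbf+\epsilon)}\, \mu(\alpha \gtrsim T).
\end{equation*}
The cusp-volume estimate $\mu(\alpha > T) \asymp T^{-N}$ (Appendix \ref{section:cuspvolumes}) then yields, for $T = e^{t(\delta_\zbf/N + 2\epsilon)}$, a bound summable over integer $t$. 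Borel--Cantelli gives the $\limsup$ bound along integer times. Non-integer times $s\in[t,t+1]$ are handled because $g^{s}g^{-t}$ lies in a fixed compact set, which distorts $\alpha$ by a bounded factor. Finally, let $\epsilon\to0$ along a countable sequence.

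\emph{Main obstacle.} I expect the main difficulty to be the covering step. One must check that the box $B$ can be chosen uniformly in $h$, which needs a local product decomposition of $\SO(n)$ near each $h_j$ into the root directions together with the centralizer part, on which $g^t$ acts boundedly. One must also confirm that the count is $e^{t\delta_\zbf}$, with the correct root sum, and not something larger coming from degenerate $z_i=z_j$ or non-commutativity. I would first try exponential coordinates $h = h_j\exp(X)$ with $\abs{X_{ij}} \le e^{-t(z_i-z_j)}$, and control the conjugation error via the Baker--Campbell--Hausdorff formula. A secondary point is to confirm the tail $T^{-N}$ uniformly over all sublattice dimensions $p$. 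This should follow from Siegel-transform moments, with the $p=1$ and $p=N-1$ cases dominant.
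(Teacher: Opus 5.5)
Your proposal is correct and follows essentially the same route as the paper. The lower bound is the paper's Lemma \ref{lemma:cuspestimatefinitetimeproblower}: Minkowski's theorem on the same thickened body, then a rotation in $H$ into the most contracted direction. The upper bound is Lemma \ref{lemma:netsize} plus Theorem \ref{thm:cuspestimatefinitetimeprobupper}, i.e.\ an $e^{\delta_\zbf t}$-size cover of $H$, a union bound via $G$-invariance and the $T^{-N}$ cusp tail, then Borel--Cantelli; your Bowen boxes are the paper's ellipsoid net written in group language.

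The covering step you flag is easy. Take $B=\{h\in H : g^thg^{-t}\in\mathcal{O}\}$ and a maximal family of pairwise disjoint translates $Bh_j$. Then every $h$ lies in some $B^{-1}Bh_j$, whose conjugate by $g^t$ lies in the fixed compact set $\mathcal{O}^{-1}\mathcal{O}$. Since the Haar measure of $B$ is $\gtrsim e^{-\delta_\zbf t}$, the number of translates is $\lesssim e^{\delta_\zbf t}$, with no need for BCH or the extra $\epsilon$.
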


\subsection{Additional directions}

There has been much recent work in dynamics aimed at understanding moments of transforms defined on moduli spaces: see, e.g., \cite{FaiHan}, \cite{KelYu}, \cite{Kim} for Siegel transforms on alternative spaces of lattices, and e.g., \cite{Vee}, \cite{AthCheMas} for Siegel-Veech transforms on strata of translation surfaces. Using the techniques of this paper, one can produce lower bounds of the same shape as Theorems \ref{thm:countingnearmanifolds} and \ref{thm:maximalestimates} in these other geometric contexts.

Cairo-Zhang prove a more general result in \cite[Theorem 5.3]{CaiZha}, where $\Sigma$ is allowed to be an arbitrary $m$-dimensional submanifold, and where weights are studied according to their $l$-plane transforms. These same results should follow from the methods of this paper, using a different choice of $\zbf$ as in Theorem \ref{thm:cuspestimatefinitetimeprobupper}.

Since the arguments in this paper are probabilistic (just like in \cite{CaiZha}), we can say nothing about the local Mizohata-Takeuchi conjecture for a \textit{fixed} surface $\Sigma$ (e.g., $\Sigma = \S^{n-1}$).

\subsection{Some notation}

Throughout this paper, there will be dimensional constants $n \le N$, a smoothness parameter $k \ge 2$, a perturbative parameter $\epsilon > 0$, and a scaling parameter $R \ge 1$.

We write $A \lesssim B$ or $A = O(B)$ if $A \le CB$ for some constant $C > 0$, and we write $A \asymp B$ if $A \lesssim B$ and $B \lesssim A$. The notation $A \ll B$ means that $A \le cB$ for some constant $c > 0$ which is additionally assumed to be quite small (thus, $\lesssim$ and $\ll$ formally have the same meaning). We write $A \lessapprox B$ if $A \lesssim_\delta R^\delta B$ for all $\delta > 0$. The notation $A = o(B)$ means that $A/B \to 0$ as the relevant parameters are extremized.

For auxiliary parameter(s) $par$, we will use subscripts in the asymptotic notation to denote dependency on $par$; for example, we have $A \lesssim_{par} B$ if $A \le C_{par} B$, where $C_{par}$ is a constant depending on $par$. We will allow the implicit constants in $\lesssim,\ll,\lessapprox,O(-),o(-)$ to depend on the ambient dimensions $n,N$ and the smoothness parameter $k$, but will be independent of the perturbative parameter $\epsilon$ and the scaling parameter $R$, unless otherwise specified. In Appendix \ref{section:bookkeeping}, we will study the explicit dependencies of these constants on $N$ to obtain the precise error bound stated in Theorem \ref{thm:LocalMTFailsalways}.

Throughout, we fix for each $N \ge 2$ a bump function $b_1 \in C_c^\infty(\R^N)$ which is radially symmetric, satisfies $\ind_{\B_1} \le b_1 \lesssim \ind_{\B_2}$, and $\hat{b}_1 \ge 0$. We write $b_R(x) = b_1(R^{-1}x)$, and we let $\phi_R = \hat{b}_R$.

Given $S \subseteq \R^n$ and $r > 0$, we write
\begin{equation*}
    \Nc_r(S) = \{x \in \R^n : \dist(x,S) < r\} \quad \mathrm{and} \quad rS = \{rs : s \in S\}.
\end{equation*}
If $\Lambda \subseteq \R^n$ is discrete (in particular, a lattice), we write $\delta_\Lambda = \sum_{\vbf \in \Lambda} \delta_\vbf$ for the Dirac distribution on $\Lambda$.

We use the notation $\B_r^m(x) = \{y \in \R^m : \dist(x,y) < r\}$ for the $r$-ball in $\R^m$ about a point $x$. When $x = 0$, we simply write $\B_r^m = \B_r^m(0)$, and moreover when the dimension is clear we write simply $\B_r$.

%% file: Sections/2-reduction.tex
\section{Reductions and a sketch}\label{section:reductions}

To start, note that after translation and restriction to a coordinate patch, we can (and always will) assume $\Sigma \subseteq \B_1^n$ without affecting any estimates.

We will first establish the inequality
\begin{equation*}
    R^{\frac{n-1}{N}}\norm{\Ec_{\Sigma'}f}_{L^2(\B_R,w\dx)}^2 \gtrsim_N R^{\frac{n-1}{n-1+k}}\norm{Xw}_{L^\infty(\AGr_1(\R^n))} \norm{f}_{L^2(\Sigma')}^2;
\end{equation*}
this will yield Theorem \ref{thm:LocalMTFailsalways} with error $\lessapprox 1$. In Appendix \ref{section:bookkeeping}, we will go through the specific dependencies in $N$ to obtain the stated error $\exp(O(\sqrt{\log R\log\log R}))$.

\subsection{A reduction}

Following Cairo-Zhang, we perform a technical reduction to pass from Theorem~\ref{thm:LocalMTFailsalways} to a statement at a fixed scale, which no longer involves the function $f$. There is no new content here, as all arguments are due to \cite{CaiZha}.

\begin{prop}\label{prop:oldmainproposition}
    Suppose $s_0 \colon [0,1]^{n-1} \to \R^n$ is a parameterization of a $C^k$ hypersurface $\Sigma_0$, so that $\norm{D^\gamma s_0}_{L^\infty} \lesssim 1$ for each $1 \le |\gamma| \le k$. Then for each $0 < \epsilon < 1$ and $R \gg_\epsilon 1$, there exist:
    \begin{enumerate}[label=(\arabic*)]
        \item Perturbations $\eta_R \colon \R^{n-1} \to \R^n$, supported on $[0,1]^{n-1}$, so that $\norm{D^\gamma \eta_R}_{L^\infty} \lesssim \epsilon$ for $0 \le |\gamma| \le k$, and associated submanifolds $\Sigma_R = (s_0 + \eta_R)([0,1]^{n-1})$;\footnote{As stated, this is trivial; however, in order to ensure that Proposition~\ref{prop:oldmainproposition}(2)(iii) holds, we will have to produce the perturbations $\eta_R$ probabilistically so as to satisfy a certain incidence estimate (see Lemma \ref{lemma:mainprobabilisticlemma}).}
        \item\label{weights} Weights $w_R \colon \B_R^n \to [0,\infty)$ satisfying:
        \begin{enumerate}[label=(\roman*)]
            \item $\norm{w_R}_{L^1} \lesssim R^{n-1}$.
            \item $\norm{X w_R}_{L^\infty} \lessapprox 1$.
            \item $\norm{\hat{w}_R}_{L^2(\Sigma_R)}^2 \gtrsim R^{\frac{n-1}{n-1+k}}R^{n-1}$.
        \end{enumerate}
    \end{enumerate}
\end{prop}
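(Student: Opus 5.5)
We will prove Proposition~\ref{prop:oldmainproposition} by choosing the weight $w_R$ as the image of a random unimodular lattice in a high-dimensional space $\R^N$, and by using Theorem~\ref{thm:countingnearmanifolds} to force many frequency points to lie close to the perturbed surface $\Sigma_R$. Concretely, fix $N \gg n$, an isometric embedding $\iota\colon \R^n \hookrightarrow \R^N$ with $\pi = {}^t\iota$, and $\Lambda \in \lattice$. We set
\begin{equation*}
    w_R = \Big(\sum_{\vbf \in R^{1/N}\Lambda^*} \phi_1(\cdot - \iota^*\text{-slice})\Big)
\end{equation*}
more precisely, $w_R$ is the restriction to $\iota(\R^n)\cap \B_R$ of a unit thickening of the dilated dual lattice $R^{\theta}\Lambda^*$ (with $\theta$ tuned so that the lattice has roughly $R^{n-1}$ points per unit-thick slab in $\B_R^n$), multiplied by $b_R$. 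Its Fourier transform is then, up to the smooth cutoff, a sum of bumps of width $R^{-1}$ centered at the projections $\pi(R^{-\theta}\Lambda)$. Thus $\hat w_R$ concentrates near the set $\pi(R^{-1/n}\Lambda\cap \B_1^N)$ considered in Theorem~\ref{thm:countingnearmanifolds}.

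The three weight conditions are then handled separately. Property (i) is a lattice-point count: by Siegel's mean value theorem, $\E_\mu |w_R|_{L^1} \asymp R^{n-1}$, and Markov's inequality gives this bound with probability $\ge 1 - \frac{1}{1000}$. Property (iii) follows from Theorem~\ref{thm:countingnearmanifolds}, applied with $m = n-1$ and hypersurface $\Sigma_0$. With probability $\ge \frac1{100}$, there are $\gtrsim \epsilon R^{\frac{n-1}{n-1+k}}$ separated points of $\pi(R^{-1/n}\Lambda\cap\B_1^N)$ within $\epsilon R^{-k/(n-1+k)}$ of $\Sigma_0$. These are separated at scale $R^{-1/(n-1+k)}$, so we can choose $\eta_R$ as a sum of disjointly supported bumps of height $\le \epsilon R^{-k/(n-1+k)}$ and width $R^{-1/(n-1+k)}$; their $C^k$ norms are $\lesssim\epsilon$. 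These bumps push $\Sigma_R$ exactly through those points. Each such point contributes a bump of $|\hat w_R|^2$ of mass $\gtrsim R^{2(n-1)}\cdot R^{-(n-1)}$ on $\Sigma_R$, and summing gives (iii). We must also check that the bumps at the chosen points do not cancel. This is where $\hat b_1 \ge 0$ is used, since it makes all contributions nonnegative.

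The main obstacle is (ii), the X-ray bound $\norm{Xw_R}_{L^\infty}\lessapprox 1$ uniformly over all lines in $\B_R^n$. A line $\ell$ in $\iota(\R^n)$ meets many points of the thickened lattice only if $\Lambda$ has a short vector (or small-covolume sublattice) nearly aligned with $\ell$ after rescaling. Controlling this uniformly over all directions amounts to controlling $\sup_{h\in H}\alpha(g^t h\Lambda)$ for a suitable diagonal flow $g^t$ with $e^t \approx R^{c}$. Here $H$ rotates inside $\R^n$, and we choose $\zbf$ to expand one direction of $\R^n$ and contract the complement. The upper bound of Theorem~\ref{thm:maximalestimates} gives $\Max^t_\zbf(\Lambda) \le e^{t(\delta_\zbf/N + o(1))}$. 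With our choice of $\zbf$, $\delta_\zbf/N = O(1/N)$, so the maximal number of points on a line is $\lesssim R^{O(1/N)}$. This is $\lessapprox 1$ once $N$ is large, and it is exactly the $R^{\frac{n-1}{N}}$ loss noted above.

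Theorem~\ref{thm:maximalestimates} is an almost-everywhere asymptotic statement, but we need a finite-$R$ statement holding with probability $\ge 1-\frac1{1000}$. We therefore intend to use the finite-time probabilistic cusp bound that underlies it (the estimate referenced as Theorem~\ref{thm:cuspestimatefinitetimeprobupper}), together with a covering of $H$ by $R^{O(1)}$ rotations at scale $R^{-1}$. We expect this step to be the main technical difficulty. A union bound then produces a single $\Lambda$ satisfying (i), (ii) and (iii) at once.
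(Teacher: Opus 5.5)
Your proposal is correct and has the same architecture as the paper:
\begin{itemize}
\item a $\mu$-random unimodular lattice in $\R^N$ with $N \gg n$;
\item the counting theorem with $m=n-1$, used to build a $C^k$-small bump perturbation through $\gtrsim \epsilon R^{\frac{n-1}{n-1+k}}$ separated projected lattice points;
\item Theorem~\ref{thm:cuspestimatefinitetimeprobupper} with $\zbf=(1-\frac1N,-\frac1N,\dots,-\frac1N)$ and $R=e^t$ for (ii);
\item Markov's inequality plus a union bound.
\end{itemize}

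The one genuine difference is the weight. You take $w_R=\hat h_R$, i.e.\ the restriction of $b_R\cdot(\phi_1*\delta_{R^{1/N}\Lambda^\vee})$ to $\iota\R^n$. The paper takes $w_R=\hat h_R^2$. Since $\phi_1=\hat b_1\ge 0$ and $b_1\ge 0$, your $w_R$ is nonnegative. By projection--slice and Poisson summation, $\hat w_R$ is a nonnegative sum of bumps of height $\asymp R^{n-1}$ and width $R^{-1}$, centred at the points of $\pi(R^{-1/N}\Lambda)$ and weighted by $b_1$. So the square is not needed, and dropping it simplifies every step:
\begin{itemize}
\item $\|w_R\|_{L^1}$ and $Xw_R$ become linear in the lattice sum. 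So (i) is one application of Siegel's mean value theorem and Markov.
\item (ii) needs only the diagonal estimate (Lemma~\ref{lemma:diagonalestimate} with $\phi_1$ in place of $\phi_1^2$), and Lemma~\ref{lemma:offdiagonalestimate} is not needed at all.
\item (iii) follows from the pointwise bound $\hat w_R\gtrsim R^{n-1}$ on $\B_{cR^{-1}}(\pi\vbf)$ for $\vbf\in R^{-1/N}\Lambda\cap\B_1^N$. The paper instead uses the autocorrelation $h_R*h_R$ together with the count $|R^{-1/N}\Lambda\cap\B_c^N|\gtrsim R$.
\end{itemize}
The squared weight mainly mirrors the Cairo--Zhang template and buys nothing essential here.

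There are a few small fixes.
\begin{itemize}
\item Your $\theta$ must be $1/N$, since that is what gives $\asymp R^{n-1}$ dual points in the unit slab. So the bumps of $\hat w_R$ sit at $\pi(R^{-1/N}\Lambda)$, not at $\pi(R^{-1/n}\Lambda)$. You therefore need the counting statement at scale $R^{-1/N}$, which is what Theorem~\ref{thm:countingpointsnge3} actually proves. Take $m=n-1$, with $\rho_0$ small and $\Sigma_0$ inside $\B_{1/2}^n$, so that the chosen points lie in $\B_1^N$, where $b_1=1$.
\item Theorem~\ref{thm:cuspestimatefinitetimeprobupper} already has the net built in, so no separate covering of $H$ is required. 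If you do cover all of $H$ at scale $R^{-1}$, you need $\asymp R^{n(n-1)/2}$ elements, which costs $R^{n(n-1)/(2N)}$. Since the tube depends only on the direction of $\ell$, $\asymp R^{n-1}=e^{\delta_\zbf t}$ directions suffice; this is what Lemma~\ref{lemma:netsize} amounts to for this $\zbf$, and it gives the $R^{(n-1)/N}$ loss. Either is $\lessapprox 1$ once $N$ is large.
\item For affine lines not through the origin, use $|L\cap(T+y)|\le|L\cap 2T|$ for centrally symmetric convex $T$, as in the proof of Lemma~\ref{lemma:diagonalestimate}.
\end{itemize}
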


\begin{rmk}
    This result is the analogue of \cite[Proposition 5.3]{CaiZha}. In their results, extremely high-dimensional probabilistic estimates are needed to establish all of the properties above, and in particular they get $\lessapprox 1$-loss in both (ii) and (iii). We are able to remove this loss for (iii), although some loss in (ii) is inevitable using our methods (see Remark \ref{rmk:matchingbounds}). To prove Proposition \ref{prop:oldmainproposition}, we consider random weights $w_\Lambda$ associated to lattices in $\R^N$, for some $N \ge n$. The only place where we need to take $N$ large relative to $n$ is in (ii), as all other estimates hold when $N=n$.
\end{rmk}

\begin{lemma}[\cite{CaiZha}, Lemma 5.3]\label{lemma:errorreduction}
    Proposition~\ref{prop:oldmainproposition} implies Theorem~\ref{thm:LocalMTFailsalways}.
\end{lemma}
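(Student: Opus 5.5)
The plan is to deduce Theorem~\ref{thm:LocalMTFailsalways} from Proposition~\ref{prop:oldmainproposition} in two stages: first a fixed-scale counterexample with $f$ built from $\hat{w}_R$, then a gluing of the scale-by-scale perturbations into one surface $\Sigma'$.

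\textbf{Fixed-scale counterexample.} At scale $R$, I will set $f_R = \overline{\hat{w}_R}|_{\Sigma_R}$, which is natural by duality. Then
\begin{equation*}
    \int \Ec_{\Sigma_R} f_R \, w_R \dx = \int_{\Sigma_R} f_R \, \hat{w}_R(-\cdot)\dsigma,
\end{equation*}
and with the sign convention chosen appropriately this equals $\norm{\hat w_R}_{L^2(\Sigma_R)}^2$. Cauchy--Schwarz in $L^2(w_R\dx)$ gives
\begin{equation*}
    \norm{\hat w_R}_{L^2(\Sigma_R)}^4 \le \norm{\Ec_{\Sigma_R} f_R}_{L^2(w_R)}^2 \norm{w_R}_{L^1}.
\end{equation*}
I then combine this with (i) and (iii) of Proposition~\ref{prop:oldmainproposition}(2), and use $\norm{f_R}_{L^2}^2 = \norm{\hat w_R}_{L^2(\Sigma_R)}^2$. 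This yields
\begin{equation*}
    \norm{\Ec f_R}_{L^2(w_R)}^2 \gtrsim R^{\frac{n-1}{n-1+k}} \norm{f_R}_{L^2}^2 \gtrsim R^{\frac{n-1}{n-1+k}}\norm{Xw_R}_{L^\infty}\norm{f_R}^2 \cdot (\text{loss from (ii)}).
\end{equation*}
The constant from (ii) produces exactly the error factor in Theorem~\ref{thm:LocalMTFailsalways}; its explicit form is deferred to the appendix.

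\textbf{Gluing across scales.} The perturbation $\eta_R$ depends on $R$, but the theorem needs a single $\Sigma'$ that works for all $R \gg 1$. I will cover dyadic scales $R = 2^j$ using disjoint small patches. Concretely, I pick disjoint cubes $Q_j \subseteq [0,1]^{n-1}$ and apply a rescaled copy of the proposition on each $Q_j$, with perturbation size $\epsilon_j$ chosen so that $\sum_j \norm{\eta_{R_j}}_{C^k} \lesssim \epsilon$.

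Rescaling a cube of side $r$ to unit size changes the relevant frequency scale by a factor of $r$ and distorts the $C^k$ norms by powers of $r$. A patch of side $r_j$ therefore realizes scale $R$ with a loss polynomial in $r_j$, and the proposition is applied at effective scale $R r_j$. To keep $\Sigma'$ a single $C^k$ surface, I will instead take one slowly shrinking sequence of cubes and, for each $R$, use the patch with $r_j \approx (\log R)^{-1}$ or similar, so that the loss is subpolynomial. Then $f$ is taken to be the function $f_R$ supported on that patch. The weight $w_R$, rescaled, stays supported in $\B_R$, and the X-ray bound is scale invariant up to constants.

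For the statement ``for all $R$'', intermediate $R$ between dyadic scales are handled by monotonicity and constant losses.

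\textbf{Main obstacle.} The hardest step is making the multi-scale perturbations compatible. The $C^k$ norms of the rescaled $\eta$ blow up like $r^{-k}$ times the amplitude. The amplitude must therefore shrink like $r^k$, which changes the effective $\epsilon$ in Proposition~\ref{prop:oldmainproposition}. One then has to check that $R \gg_\epsilon 1$ still holds with $\epsilon = \epsilon_j$, which forces the patches to shrink slowly enough in $j$. The remaining issue is to confirm that the $\epsilon$-dependence of the implied constants in (iii) is polynomial (it should scale like $\epsilon^{n-1}$, as in Theorem~\ref{thm:countingnearmanifolds}), so that the loss stays absorbed in the error term.
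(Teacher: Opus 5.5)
Your argument is correct and follows the paper's sketch. The fixed-scale step is exactly the easy duality direction $D_\Sigma(R)\lesssim C_\Sigma(R)$: test with $f=\hat w_R|_{\Sigma_R}$ and apply Cauchy--Schwarz, and the reverse, pigeonholed inequality is not needed. The multi-scale step is the Cairo--Zhang patching of rescaled single-scale perturbations on disjoint shrinking cubes, which works because the Mizohata--Takeuchi ratio is dilation- and translation-invariant. The only bookkeeping fix is to choose $r_j$ with $\sum_j r_j^{n-1}\lesssim 1$, e.g.\ $r_j\approx j^{-2}$; your $r_j\approx j^{-1}$ does not pack into $[0,1]$ when $n=2$. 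Also, for a hypersurface the $\epsilon$-loss in (iii) is linear ($\epsilon^{n-m}=\epsilon$, not $\epsilon^{n-1}$), which is harmless either way.
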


We will not give a full proof of Lemma~\ref{lemma:errorreduction}, but provide a sketch. Given $R \ge 1$, let us define the constant $C_\Sigma(R)>0$ to be the smallest constant for which
\begin{equation*}
    \norm{\Ec_\Sigma f}_{L^2(\B_R,w\dx)}^2 \le C_\Sigma(R)\norm{Xw}_{L^\infty}\norm{f}_{L^2(\Sigma)}^2
\end{equation*}
holds for all $f$ and all $w$. Moreover define $D_\Sigma(R) > 0$ to be the smallest constant for which
\begin{equation*}
    \norm{\hat{w}}_{L^2(\Sigma)}^2 \le D_\Sigma(R)\norm{Xw}_{L^\infty}\norm{w}_{L^1}
\end{equation*}
holds for all $w$. Then an argument involving duality and dyadic pigeonholing (\cite[Lemma 2.1]{CaiZha}) shows
\begin{equation*}
    D_\Sigma(R) \lesssim C_\Sigma(R) \lesssim \log(R)D_\Sigma(R).
\end{equation*}
Once this inequality is established, a patching argument (\cite[Lemma 5.4]{CaiZha}) allows us to pass from the single-scale statement of Proposition \ref{prop:oldmainproposition} to all scales $R$, with some small additional loss which may be absorbed into the larger error term.

\subsection{A sketch of the proof of Theorem \ref{thm:LocalMTFailsalways}, assuming Theorems \ref{thm:countingnearmanifolds} and \ref{thm:maximalestimates}}

In view of the previous reduction, it suffices to show a lower bound of the shape
\begin{equation*}
    \norm{\hat{w}}_{L^2(\Sigma)}^2 \gtrapprox R^{\frac{n-1}{n-1+k}}\norm{Xw}_{L^\infty}\norm{w}_{L^1},
\end{equation*}
for some weight $w$ supported on $\B_R^n$. Let us ignore for a moment the X-ray factor, and assume $n= N$. Take $h$ to be a suitably normalized approximation of $\delta_{R^{-1/n}\Lambda}$, where $\Lambda$ is a random (unimodular) lattice in $\R^n$, and let $w = \hat{h}^2$. Then $\norm{w}_{L^1}$ is roughly the number of elements of the dual lattice $(R^{-1/n}\Lambda)^\vee = R^{1/n}\Lambda^\vee$ lying in $\B_R^n$; this lattice has covolume $R$, so we should expect $\lesssim R^{n-1}$ points in this intersection.

On the other hand, $\norm{\hat{w}}_{L^2(\Sigma)}^2$ is roughly $R^{\dim \Sigma} = R^{n-1}$ times the number of points of $R^{-1/n}\Lambda$ which lie on $\Sigma$. By applying Theorem \ref{thm:countingnearmanifolds}, we can thicken our surface $\Sigma$ a bit to make this intersection quite large, on the order of $R^{\frac{n-1}{n-1+k}}$, while keeping it slim enough in the transversal directions that a subsurface $\Sigma_R$ which passes through all these lattice points remains $C^k$-close to $\Sigma$.

However, we have neglected the X-ray factor, and in fact staying in dimension $n$, this construction cannot produce a contradiction. The reason for this is that every lattice $L$ of covolume $R$ has a nonzero vector $\vbf \in L$ of length $\lesssim R^{1/n}$ (its \textit{systole}), and in particular $|Xw(\R \vbf)| \asymp R^{1-1/n}$.

To remedy this, let us instead embed $\R^n \subseteq \R^N$ for some $N \gg n$, and take $w$ to be sampled randomly from lattices in $\R^N$ instead. Since the X-ray transform still only ``sees'' the first $n$ directions, we only need to avoid the set of lattices which contains relatively short vectors lying very close to this high-codimension subspace. We can translate this into a maximal inequality for excursions of the associated lattices into the cusp; Theorem \ref{thm:maximalestimates} then shows that the probability such an event happens is small when $N$ is large.

\begin{rmk}
    We refer the reader to Carbery-Iliopoulou-Wang (\cite{CarIliWan}), Bennett-Guti\'errez-Nakamura-Oliveira (\cite{BenGutNakOli}), Carbery-Li-Pang-Yung (\cite{CarLiPanYun}), Cairo (\cite{Cai}), and Cairo-Zhang (\cite{CaiZha}) for recent results on the Mizohata-Takeuchi conjecture, as well as Stein (\cite{Ste}), Takeuchi (\cite{Tak1}, \cite{Tak2}), Mizohata (\cite{Miz}), and Barcelo-Ruiz-Vega (\cite{BarRuiVeg}) for classical references.
\end{rmk}

%% file: Sections/3-latticegeometry.tex
\section{Geometry of spaces of lattices}\label{section:latticegeometry}

To prove Proposition \ref{prop:oldmainproposition}, we use a probabilistic construction, with weights being chosen randomly over the space of lattices of an appropriate scale. To do so, we need some results from dynamics and the geometry of numbers. From now on, we fix some integer $N \ge n$; to prove Proposition \ref{prop:oldmainproposition} with error $\lesssim_\delta R^\delta$, we will eventually take $N \sim \delta^{-1}n$, but all results hold in general.\footnote{In \cite{CaiZha}, the authors require $N \gg \delta^{-1}n^2$, and so our high-dimensional probabilistic estimates are ``not as high-dimensional.''}

\subsection{Elementary facts on lattices}

Most definitions and facts in this subsection are standard, and can be found in e.g. \cite{Cas}. 

Recall the notation $G = \SL_N(\R)$, $\Gamma = \SL_N(\Z)$. Note that $G$ acts transitively on the space $\lattice$, and the stabilizer of $\Z^N \in \lattice$ is $\Gamma$, so we may identify $\lattice = G/\Gamma$. Since $\Gamma \le G$ is a lattice, the space $\lattice$ carries a canonical $G$-invariant probability measure, which we will denote by $\mu$ (see \cite{Sie}).

Given $\Lambda \in \lattice$, let $\Lambda^\vee \in \lattice$ be its \textit{dual lattice}. The subset $\Lambda_{\prim} \subseteq \Lambda$ refers to the set of \textit{primitive vectors}, i.e., vectors $\vbf \in \Lambda$ for which there does not exist $\wbf \in \Lambda$ and an integer $m \ge 2$ with $\vbf=m \wbf$.

We reserve the notation $\Lambda$ for a unimodular lattice, and write $L$ for a general lattice, with no condition on its covolume.

Let $\Kc_0^N$ denote the collection of all compact, convex, centrally symmetric bodies in $\R^N$.

\begin{thm}[\cite{VDC}]\label{thm:VDC}
    If $K \in \Kc_0^N$, then we have
    \begin{equation*}
        |\Lambda \cap K| \gtrsim \Vol(K)
    \end{equation*}
    uniformly in $\Lambda \in \lattice$.
\end{thm}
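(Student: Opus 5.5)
We have to prove van der Corput's theorem: for $K\in\Kc_0^N$ and unimodular $\Lambda$, $|\Lambda\cap K|\gtrsim \Vol(K)$, with the implied constant depending only on $N$. The quantitative input is Blichfeldt's principle: if a measurable set $S$ satisfies $\Vol(S) > m\cdot\mathrm{covol}(\Lambda) = m$, then some point of $\R^N/\Lambda$ has at least $m+1$ preimages in $S$. Equivalently, there are $m+1$ distinct points $s_0,\dots,s_m\in S$ whose pairwise differences lie in $\Lambda$. To prove this, integrate $\sum_{\vbf\in\Lambda}\ind_S(x+\vbf)$ over a fundamental domain $F$. The integral equals $\Vol(S)>m$, and $\Vol(F)=1$, so the integer-valued integrand exceeds $m$ at some point.

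Now apply this to $S=\tfrac12 K$. Set $m=\lceil 2^{-N}\Vol(K)\rceil-1$, so that $\Vol(S)=2^{-N}\Vol(K)>m$. Blichfeldt gives distinct $s_0,\dots,s_m\in \tfrac12K$ with $s_i-s_j\in\Lambda$. Then $s_i-s_0\in \tfrac12K-\tfrac12K=\tfrac12K+\tfrac12K=K$, using central symmetry and convexity. This produces $m+1$ distinct lattice points $s_i-s_0\in\Lambda\cap K$, one of which is $0$. Hence
\[
|\Lambda\cap K|\ge m+1\ge 2^{-N}\Vol(K).
\]
This gives the claim with constant $2^{-N}$, uniformly in $\Lambda$.

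The argument also yields the sharper van der Corput count $|\Lambda\cap K|\ge 2\lfloor 2^{-N}\Vol(K)\rfloor+1$, since nonzero points come in pairs $\pm\vbf$. That refinement is not needed here. When $\Vol(K)<2^N$ the bound is trivial, because $0\in\Lambda\cap K$ and so $|\Lambda\cap K|\ge 1> 2^{-N}\Vol(K)$; the argument above also covers this case automatically.

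There is no serious obstacle. The only points to check are the following.
\begin{enumerate}
\item The Blichfeldt averaging requires measurability, which holds since $K$ is compact.
\item The difference-set identity $\tfrac12K-\tfrac12K=K$ is exactly where both convexity and symmetry of $K$ are used.
\end{enumerate}
Compactness is not needed for the counting itself. It only ensures that the count is finite, so the statement is meaningful.
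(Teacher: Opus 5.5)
Your proof is correct, and it gives the explicit constant: Blichfeldt's averaging applied to $\tfrac12K$, together with $\tfrac12K-\tfrac12K=K$, yields $|\Lambda\cap K|\ge\lceil 2^{-N}\Vol(K)\rceil\ge 2^{-N}\Vol(K)$. The paper gives no proof and simply cites van der Corput. As its title indicates, that source generalizes Mordell's pigeonhole proof of Minkowski's theorem, which is a discretized version of the same averaging idea, so your argument is the standard modern route to the same result. Your Blichfeldt step works verbatim for any covolume, or follows by rescaling. It therefore also gives $|\Delta\cap K|\ge 2^{-d}\Vol(K)/\Covol(\Delta)$ for a non-unimodular $d$-dimensional lattice $\Delta$. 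This is the form the paper actually invokes in Propositions \ref{prop:cusptocount} and \ref{prop:quantheightcount}, and in Lemma \ref{lemma:propertyiii}.

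One caution about your side remark on the factor-two refinement: it does not follow merely from the fact that nonzero points come in pairs $\pm v$. Some of the vectors $s_i-s_0$ could be negatives of one another, so symmetry alone gives no doubling. You need to choose $s_0$ to be, say, the lexicographically smallest of the $s_i$. Then every $s_i-s_0$ with $i\ge 1$ is lexicographically positive, and their negatives are genuinely new points. This gives $2\lceil 2^{-N}\Vol(K)\rceil-1$ points. To get $2\lfloor 2^{-N}\Vol(K)\rfloor+1$ when $2^{-N}\Vol(K)$ is an integer, you need a further limiting argument: apply the bound to $(1+\delta)K$, then use compactness of $K$ and discreteness of $\Lambda$. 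Since you do not use the refinement, this does not affect the proof of the stated theorem.
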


Let $K \in \Kc_0^N$, and define the successive minima $\lambda_1,\dots,\lambda_N$ associated to $K$ by
    \begin{equation*}
        \lambda_i(\Lambda) = \inf\left\{\lambda > 0 : \text{$\Lambda \cap K$ contains $i$ linearly independent vectors} \right\}.
    \end{equation*}

\begin{thm}[Minkowski's Second Theorem]\label{thm:minksecond}
    If $K \in \Kc_0^N$ has associated successive minima $\lambda_1,\dots,\lambda_N$, then for any lattice $L$ (of any covolume), we have
    \begin{equation*}
        \prod_{i=1}^N \lambda_i(L) \asymp \frac{\Covol(L)}{\Vol(K)}.
    \end{equation*}
\end{thm}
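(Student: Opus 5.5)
The plan is to prove the two inequalities separately. We fix linearly independent vectors $\vbf_1,\dots,\vbf_N \in L$ with $\vbf_i \in \lambda_i K$; these exist by compactness of $K$ and discreteness of $L$. Write $V_i = \mathrm{span}(\vbf_1,\dots,\vbf_i)$. Since constants may depend on $N$, we aim for the classical bounds
\begin{equation*}
    \frac{2^N}{N!}\Covol(L) \le \Vol(K)\prod_{i=1}^N \lambda_i(L) \le 2^N \Covol(L).
\end{equation*}

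For the lower bound, consider the cross-polytope $P = \mathrm{conv}\{\pm \vbf_i/\lambda_i\}$. Each vertex $\pm\vbf_i/\lambda_i$ lies in $K$, and $K$ is convex and centrally symmetric, so $P \subseteq K$. A direct computation gives
\begin{equation*}
    \Vol(P) = \frac{2^N}{N!}\,\frac{|\det(\vbf_1,\dots,\vbf_N)|}{\prod_i \lambda_i}.
\end{equation*}
The $\vbf_i$ span a full-rank sublattice of $L$, so $|\det(\vbf_1,\dots,\vbf_N)| \ge \Covol(L)$. Combining these gives the lower bound.

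For the upper bound, the plan is the standard Minkowski/Davenport packing argument. Let $\mathrm{int}$ denote the interior. Choose coordinates adapted to the flag $V_1 \subseteq \cdots \subseteq V_N$. Define a map $\phi$ on $K$ that, on the "layer" of $V_i \setminus V_{i-1}$ components, rescales by $\lambda_i/\lambda_{i+1}$ successively, in the continuous-piecewise way of Davenport (equivalently, use the iterated shrinking $\phi_j$ acting on the quotient $\R^N/V_{j-1}$). The key properties to verify are the following.
\begin{itemize}
    \item[(a)] The translates $\tfrac{\lambda_N}{2}\phi(\mathrm{int}\,K) + \ubf$, for $\ubf \in L$, are pairwise disjoint. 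The reason is that a difference of two points lying in $\lambda_i\,\mathrm{int}\,K$ modulo $V_{i-1}$ cannot be a lattice vector outside $V_{i-1}$, by the definition of $\lambda_i$.
    \item[(b)] $\Vol(\phi(K)) = \Vol(K)\prod_{i<N}(\lambda_i/\lambda_N)$. This follows because each stage scales the volume of fibers over $V_{i-1}^\perp$ exactly, via Fubini and the Brunn--Minkowski-type fact that the fiberwise maps are volume-scaling on convex sections.
\end{itemize}
Disjointness of translates by $L$ then gives
\begin{equation*}
    (\lambda_N/2)^N \Vol(\phi(K)) \le \Covol(L),
\end{equation*}
which is exactly $\Vol(K)\prod_i\lambda_i \le 2^N\Covol(L)$.

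If one is willing to lose $N$-dependent constants, there is a simpler route for the upper bound. By John's theorem, $K$ is comparable, within a factor $\sqrt N$, to an ellipsoid. A linear map then reduces to $K$ being the Euclidean ball, since volumes and covolumes scale identically and successive minima are invariant under simultaneously transforming $L$ and $K$. For the ball one wants to show $\Covol(L) \gtrsim_N \prod \lambda_i$, for instance by comparing $L$ with a reduced (HKZ/LLL-type) basis, whose Gram--Schmidt lengths control both quantities up to $N$-dependent factors.

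The main obstacle is the upper bound, specifically the disjointness in step (a): the chosen $\vbf_i$ need not form a basis of $L$, so a naive Gram--Schmidt comparison fails. A lattice vector in $V_i\setminus V_{i-1}$ might have a short projection to $V_{i-1}^\perp$ while being long overall. This is why the Davenport fiber-wise construction, or a reduced basis, is needed rather than a direct estimate.
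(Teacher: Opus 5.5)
The paper does not prove this statement. It is quoted as a standard fact (with a pointer to \cite{Cas}), so there is no argument of the paper's to compare yours with, and your proposal has to stand on its own.

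Your lower bound via the cross-polytope $\mathrm{conv}\{\pm\vbf_i/\lambda_i\}\subseteq K$ is complete and correct. It is also the only direction the paper ever uses. Proposition~\ref{prop:counttocusp} needs only $\Covol(\Delta)\lesssim\prod_i\lambda_i(\Delta)$, and Proposition~\ref{prop:quantheightcount} invokes exactly your constant, $\prod_i\lambda_i(\Delta)\ge\frac{2^d}{d!}\Covol(\Delta)/\Vol(\B_1^d)$.

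Your upper bound names the right argument (Davenport's), and the volume bookkeeping is correct. As written, though, it is an outline: $\phi$ is never defined, and (a) is asserted rather than proved. Here is one way to make it precise.

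\begin{itemize}
\item \emph{Construction.} Put $T_N=\frac{\lambda_N}{2}K^\circ$. For $j=N-1,\dots,1$, obtain $T_j$ from $T_{j+1}$ by applying, on each coset $x+V_j$, the homothety of ratio $\lambda_j/\lambda_{j+1}$ centered at the centroid of the section $T_{j+1}\cap(x+V_j)$.
\item \emph{Nesting.} By induction, every such section is a homothetic copy of a section of $T_N$, hence convex. So $T_1\subseteq T_2\subseteq\cdots\subseteq T_N$.
\item \emph{Composite map.} Restricted to any coset of $V_j$, the composite map $T_N\to T_j$ is a single homothety of ratio $\lambda_j/\lambda_N$. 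Hence two points of $T_1$ lying in a common coset of $V_j$ differ by an element of $\frac{\lambda_j}{\lambda_N}\bigl(\frac{\lambda_N}{2}K^\circ-\frac{\lambda_N}{2}K^\circ\bigr)=\lambda_jK^\circ$.
\item \emph{Key fact.} $L\cap\lambda_jK^\circ\subseteq V_{j-1}$. Indeed, a lattice point in $\lambda'K$ with $\lambda'<\lambda_j$ lies in the span of those $\vbf_i$ with $\lambda_i\le\lambda'$. This holds for any choice of the $\vbf_i$, even when minima coincide.
\item \emph{Disjointness.} Suppose $p,p'\in T_1$ and $p-p'\in L$. Descending induction over $j=N,\dots,1$, using the previous two points, gives $p-p'\in V_{j-1}$ for every $j$, so $p=p'$.
\item \emph{Volume.} Stage $j$ multiplies volume by exactly $(\lambda_j/\lambda_{j+1})^j$, by Fubini over cosets of $V_j$. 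No Brunn--Minkowski input is needed. The product telescopes to your formula.
\end{itemize}

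Note that the contraction acts along the cosets of $V_j$. Read literally, your parenthetical about shrinking on the quotient $\R^N/V_{j-1}$ contracts the wrong coordinates.

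Your John's-theorem fallback is also fine, and the ball case needs no reduced bases. The ellipsoid with semi-axes $\lambda_1,\dots,\lambda_N$ along the Gram--Schmidt directions of $\vbf_1,\dots,\vbf_N$ contains no nonzero lattice point in its interior, because a lattice point in $V_i\setminus V_{i-1}$ has length at least $\lambda_i$. Minkowski's first theorem then gives $\Vol(\B_1^N)\prod_i\lambda_i\le 2^N\Covol(L)$. Combined with John's theorem, this yields the upper bound with constant $2^N N^{N/2}$, which suffices for the $\asymp$ as stated.
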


\begin{thm}[\cite{Hen}]\label{thm:succminest}
    For all $K \in \Kc_0^N$ and any lattice $L$, we have
    \begin{equation*}
        |L \cap K| \lesssim \prod_{i=1}^N \left( \frac{1}{\lambda_i(L)} + 1 \right).
    \end{equation*}
\end{thm}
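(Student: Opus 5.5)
The plan is to prove the bound by induction on the dimension, slicing along the shortest lattice direction. Only a dimensional constant is needed, not Henk's sharp $2^{N-1}$.

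\textbf{Step 1: reduce to the span of the short vectors.} Choose linearly independent $\vbf_1,\dots,\vbf_N \in L$ with $\vbf_i \in \lambda_i(L)K$. If $\lambda_1(L) > 1$, then $L \cap K = \{0\}$ and the bound is trivial. Otherwise let $d$ be the largest index with $\lambda_d(L) \le 1$, and set $V_d = \mathrm{span}(\vbf_1,\dots,\vbf_d)$. Every $\xbf \in L \cap K$ lies in $V_d$: if not, then $\vbf_1,\dots,\vbf_d,\xbf$ would be $d+1$ independent vectors of $L$ in $K$, forcing $\lambda_{d+1}(L) \le 1$. For $i > d$ the factor $1/\lambda_i + 1$ is at least $1$. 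So it suffices to show
\begin{equation*}
    |L_d \cap K_d| \lesssim \prod_{i=1}^d \frac{1}{\lambda_i(L)},
\end{equation*}
where $L_d = L \cap V_d$ and $K_d = K \cap V_d$. The successive minima of $L_d$ with respect to $K_d$ are exactly $\lambda_1,\dots,\lambda_d$, all at most $1$.

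\textbf{Step 2: fibre along $\vbf_1$.} Let $\pi$ be the projection $V_d \to V_d/\R\vbf_1$. Primitivity of $\vbf_1$ (minimality of $\lambda_1$) means each fibre $(\xbf + \R\vbf_1) \cap L$ is a translate of $\Z\vbf_1$. Since $K_d$ is convex and symmetric, each chord of $K_d$ in direction $\vbf_1$ has $K$-length at most $2$. Hence each fibre contains at most $2/\lambda_1 + 1 \lesssim 1/\lambda_1$ points of $L \cap K$. The images of these points lie in $\pi(L_d) \cap \pi(K_d)$, a $(d-1)$-dimensional lattice-point count. By induction this is $\lesssim \prod_{i=1}^{d-1}(1/\mu_i + 1)$, where $\mu_i$ are the successive minima of $\pi(L_d)$ with respect to $\pi(K_d)$.

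\textbf{Step 3: compare $\mu_i$ with $\lambda_{i+1}$.} Lift independent vectors of $\pi(L_d) \cap \mu_i\pi(K_d)$ and reduce the $\vbf_1$-coordinate modulo $\vbf_1$. This gives lifts in $\mu_i K + [0,1]\vbf_1 \subseteq (\mu_i + \lambda_1)K$, which together with $\vbf_1$ span $i+1$ dimensions. Hence
\begin{equation*}
    \lambda_{i+1} \le \mu_i + \lambda_1 .
\end{equation*}
When $\mu_i \ge \lambda_1$ this gives $\mu_i \ge \lambda_{i+1}/2$, so $1/\mu_i + 1 \lesssim 1/\lambda_{i+1} + 1$, as needed.

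\textbf{Main obstacle.} The difficulty is the regime $\mu_i < \lambda_1$, where Step 3 gives no useful lower bound on $\mu_i$. The fix I would try first is to avoid losing in this regime by choosing the fibering direction adaptively. Order the $\mu_i$ and note that $\mu_1 \ge$ (the $K$-distance from the projected lattice to the origin) is controlled, since a short projected vector lifts to a vector in $(\mu_1 + \lambda_1)K$ independent of $\vbf_1$.

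Alternatively, I would replace $K$ by $2K$ in the inductive hypothesis. Then the only loss is at indices with $\lambda_{i+1} \le 2\lambda_1$, and there $1/\lambda_{i+1} \asymp 1/\lambda_1$. In that regime I would bound the contribution by a volume packing estimate instead: the translates $\xbf + \tfrac{\lambda_1}{2}K_d$ for $\xbf \in L_d \cap K_d$ are disjoint and contained in $2K_d$. Combined with Theorem~\ref{thm:minksecond} on $V_d$, which gives $\prod_{i \le d}\lambda_i \asymp \Covol(L_d)/\Vol(K_d)$, this should handle the clustered minima.

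If this bookkeeping becomes unwieldy, I would fall back on citing the statement from \cite{Hen} directly.
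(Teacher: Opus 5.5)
\textbf{Where the proposal stops short.} Steps 1 and 2 are sound, but the induction never closes. The reason is Step 3: with the coefficient of $\vbf_1$ reduced to $[0,1]$, the lift lies only in $(\mu_i+\lambda_1)K$. The fact that the lift is a nonzero lattice vector then gives only $\mu_i \ge 0$, so the regime $\mu_i \ll \lambda_1$ is not excluded. Neither of your remedies is carried out, and the packing one would not work as stated:
\begin{itemize}
\item Disjointness of the sets $x+\frac{\lambda_1}{2}K_d$ inside $2K_d$ gives only $|L_d\cap K_d| \le (4/\lambda_1)^d$.
\item That bound matches $\prod_{i\le d}\lambda_i^{-1}$ only when \emph{all} the minima are comparable to $\lambda_1$. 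It fails when only the first few cluster.
\item Minkowski's second theorem does not rescue this. For thin bodies, $|L_d\cap K_d|$ is not controlled by $\Vol(K_d)/\Covol(L_d)$, which is exactly why the statement is phrased through successive minima.
\end{itemize}
Falling back on \cite{Hen} is what the paper itself does: it gives no proof of this theorem and simply cites Henk.

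\textbf{The missing idea.} When you lift $y \in \pi(L_d)\cap \mu_i\pi(K_d)$, write $\ell = k + s\vbf_1$ with $k\in\mu_i K$. Subtract the \emph{nearest} integer multiple of $\vbf_1$ instead of $\lfloor s\rfloor \vbf_1$. The lift then lies in $\mu_i K + \frac{\lambda_1}{2}K = (\mu_i+\frac{\lambda_1}{2})K$. It is a nonzero lattice vector, since its projection is $y\neq 0$. Hence $\lambda_1 \le \mu_i + \lambda_1/2$, that is, $\mu_i \ge \lambda_1/2$. Your independence argument then gives
\begin{equation*}
\lambda_{i+1} \le \mu_i + \frac{\lambda_1}{2} \le 2\mu_i .
\end{equation*}
So $1/\mu_i \le 2/\lambda_{i+1}$ always, and the problematic regime never occurs.

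\textbf{What the repaired argument gives.} The induction now closes with $C_d \le 2^d C_{d-1}$. Step 1 even becomes unnecessary, because the fibre bound $2/\lambda_1+1$ already has the right shape. The resulting constant is $2^{O(N^2)}$. That is enough for the theorem as stated, since implicit constants may depend on $N$. It is weaker than Henk's $2^{N-1}\prod_i(2/\lambda_i+1)$, which is what the paper uses in Proposition~\ref{prop:quantheightcount} to keep the appendix bookkeeping at size $N^{O(N)}$.
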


\subsection{Moments of Siegel transforms}

Suppose $f \in C_c(\R^N)$; we define its \textit{Siegel transform} $\hat{f}^{\SV} \colon \lattice \to \C$ via the formula
\begin{equation*}
    \hat{f}^{\SV}(\Lambda) = \sum_{\vbf \in \Lambda_{\prim}} f(\vbf).
\end{equation*}

\textit{Siegel's mean value theorem} states that the Siegel transform is $L^1 \to L^1$-bounded.

\begin{thm}[\cite{Sie}]\label{thm:Siegelsformula}
    If $f \in C_c(\R^N)$, then
    \begin{equation*}
        \int_{\lattice} \hat{f}^{\SV}(\Lambda)\dmu(\Lambda) = \frac{1}{\zeta(N)}\int_{\R^N} f \dx.
    \end{equation*}
\end{thm}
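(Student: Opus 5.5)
The plan is the classical unfolding argument for Siegel's formula. We need to show that $\hat{f}^{\SV}$ is $\mu$-integrable and that its integral equals $\zeta(N)^{-1}\int f$.

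\textbf{Step 1: the integral is a rotation-invariant Radon measure on $\R^N\setminus\{0\}$.} For $f \ge 0$ in $C_c(\R^N)$, define $I(f) = \int_{\lattice}\hat{f}^{\SV}\dmu$. This is a positive linear functional. Since $\mu$ is $G$-invariant and $\widehat{f\circ g}^{\SV}(\Lambda) = \hat f^{\SV}(g\Lambda)$, we get $I(f\circ g)=I(f)$ for all $g\in G$.

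Finiteness must be checked first. By Theorem~\ref{thm:succminest} and Minkowski's Second Theorem (Theorem~\ref{thm:minksecond}), $\hat{f}^{\SV}(\Lambda)\lesssim_f \prod_i(\lambda_i^{-1}+1)\lesssim \lambda_1(\Lambda)^{-(N-1)}$ up to constants. This bound lies in $L^1(\mu)$, which follows from the standard Siegel-domain computation of cusp volumes: $\mu(\lambda_1<\epsilon)\lesssim \epsilon^N$.

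By the Riesz representation theorem, $I$ is therefore given by a Radon measure $\nu$ on $\R^N\setminus\{0\}$; primitive vectors are nonzero, so the origin plays no role. The measure $\nu$ is $\SL_N(\R)$-invariant. Since $\SL_N(\R)$ acts transitively on $\R^N\setminus\{0\}$ for $N\ge2$, and the stabilizer of a vector is unimodular, the invariant measure is unique up to scaling. Hence $\nu = c\cdot\mathrm{Leb}$ for some constant $c$.

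\textbf{Step 2: compute $c$.} Here I would use the unfolding identity. Writing $\Lambda=g\Z^N$, the primitive vectors of $\Lambda$ are exactly $g\gamma e_1$ for $\gamma\in\Gamma/\Gamma_{e_1}$, where $\Gamma_{e_1}$ is the stabilizer of $e_1$ in $\Gamma$. Unfolding gives
\begin{equation*}
I(f)=\int_{G/\Gamma_{e_1}} f(ge_1)\,dg,
\end{equation*}
and the right side then decomposes as $\int_{\R^N} f\,dx$ times the covolume of $\Gamma_{e_1}$ in the stabilizer of $e_1$ in $G$. The stabilizer $\Gamma_{e_1}$ is $\SL_{N-1}(\Z)\ltimes\Z^{N-1}$, so $c$ is a ratio of normalized covolumes. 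The difficulty is keeping track of the Haar normalizations on $G$, on the stabilizer, and on the orbit.

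I expect this constant to be the main obstacle, and I would bypass the normalizations with a counting argument instead. Take $f=\ind_{\B_r}$ (approximated by continuous functions) and let $r\to\infty$. On one side, $I(f)=c\Vol(\B_r)$ exactly, since $\nu=c\cdot\mathrm{Leb}$. On the other side, for a fixed lattice $\Lambda$ the number of primitive points in $\B_r$ is asymptotic to $\Vol(\B_r)/\zeta(N)$. This follows by M\"obius inversion: the full count $|\Lambda\cap\B_r|\sim\Vol(\B_r)$, and each nonzero vector is a unique integer multiple $m\vbf$ of a primitive vector, giving $\sum_{m}$ of primitive counts in $\B_{r/m}$. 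Dividing by $\Vol(\B_r)$ gives pointwise convergence of $\hat f^{\SV}(\Lambda)/\Vol(\B_r)$ to $1/\zeta(N)$.

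To pass this limit under the integral, I need a dominating function. The normalized count is bounded by $C(1+\lambda_1(\Lambda)^{-(N-1)})$ uniformly for $r\ge1$, which is again Theorem~\ref{thm:succminest} combined with the cusp integrability from Step 1. Dominated convergence then gives $c=1/\zeta(N)$, completing the proof.
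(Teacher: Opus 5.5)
The paper gives no proof of this statement; it is quoted from Siegel \cite{Sie}. Your argument is a correct, self-contained proof. The useful comparison is with the unfolding route you sketch and then set aside, which is the one the paper itself uses for cusp volumes in Appendix~\ref{section:cuspvolumes} (Lemma~\ref{lemma:pvolumesparabolic}).

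\textbf{Comparison of the two routes.} Unfolding reduces the integral to $\int_{G/\Gamma_{e_1}} f(ge_1)\,dg$. Reading off $\zeta(N)^{-1}$ from there needs the covolumes of $\SL_N(\Z)$ and of $\Gamma_{e_1}\cong\SL_{N-1}(\Z)\ltimes\Z^{N-1}$ in compatible Haar normalizations. That is, it needs the volume formula $\zeta(2)\cdots\zeta(N)$, which the appendix imports from \cite{Wei} and which is usually proved by an induction on $N$ that itself passes through Siegel's formula or an equivalent counting argument. Your route needs no Haar normalizations and no volume computation, only qualitative cusp integrability:
\begin{itemize}
\item $G$-invariance forces $\nu=c\cdot\mathrm{Leb}$;
\item $c$ is then fixed by $|\Lambda_{\prim}\cap\B_r|\sim\Vol(\B_r)/\zeta(N)$ and dominated convergence against $1+\lambda_1^{-(N-1)}$.
\end{itemize}
It does not produce the volume of $G/\Gamma$ as a by-product, though combining it with the unfolding identity is exactly how one recovers that formula inductively.

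\textbf{Points to tighten.}
\begin{itemize}
\item Uniqueness of an invariant Radon measure on $\R^N\setminus\{0\}\cong G/G_{e_1}$ does not need unimodularity of the stabilizer. That hypothesis concerns existence, which Lebesgue measure already witnesses.
\item You should explicitly rule out an atom of $\nu$ at $0$; otherwise you only get $I(f)=c\int f+a f(0)$. This is one line: $\Lambda_{\prim}\cap\B_\delta=\emptyset$ once $\delta<\lambda_1(\Lambda)$, so $\nu(\B_\delta)\to0$ by your dominating function.
\item In the M\"obius step, record that $|\Lambda\cap\B_s|-1\lesssim_\Lambda s^N$ for all $s>0$ (it vanishes for $s<\lambda_1(\Lambda)$). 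Then the inverted series over $m$ can be passed to the limit by dominated convergence, using $\sum_m m^{-N}<\infty$ for $N\ge2$.
\item Your choice of the Siegel-set computation for $\mu(\lambda_1<\epsilon)\lesssim\epsilon^N$ is the right one. It is independent of Siegel's formula and works for $N\ge 2$, even though Theorem~\ref{thm:cuspvolume} is stated only for $N\ge3$.
\end{itemize}
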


\begin{rmk}
    In view of Siegel's mean value theorem, the map $f \mapsto \int_{\lattice} \hat{f}^{\SV}\dmu$ defines a locally finite Borel measure on $\R^N$, and thus it is straightforward to extend results on compactly supported continuous functions to general bounded, compactly supported measurable functions. We will implicitly use these arguments in what follows, and in particular refer to $\hat{\ind}_B^{\SV}$ for bounded measurable sets $B$, which is the (primitive) lattice point counting function.
\end{rmk}

\begin{cor}
    If $B \subseteq \R^N$ is a bounded set, then
    \begin{equation*}
        \E_{\lattice}\left[ |\Lambda_{\prim} \cap B| \right] = \frac{1}{\zeta(N)}\Vol(B).
    \end{equation*}
\end{cor}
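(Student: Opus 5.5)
The corollary is the specialization of Theorem~\ref{thm:Siegelsformula} to $f = \ind_B$, justified through the preceding remark. I would proceed as follows.

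First, I would record the identity
\begin{equation*}
    \hat{\ind}_B^{\SV}(\Lambda) = \sum_{\vbf \in \Lambda_{\prim}} \ind_B(\vbf) = |\Lambda_{\prim} \cap B|,
\end{equation*}
so the claim is exactly $\int_{\lattice} \hat{\ind}_B^{\SV}\dmu = \zeta(N)^{-1}\Vol(B)$. This count is finite for each $\Lambda$ because $\Lambda$ is discrete and $B$ is bounded.

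Next, I would extend Siegel's formula from $C_c(\R^N)$ to indicators. By Theorem~\ref{thm:Siegelsformula}, the functional $f \mapsto \int_{\lattice} \hat{f}^{\SV}\dmu$ is positive on $C_c(\R^N)$. Riesz representation therefore gives a Radon measure $\nu$ on $\R^N$, and Siegel's formula says $\nu = \zeta(N)^{-1}\,\mathrm{Leb}$.

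To pass to $\ind_U$ for a bounded open set $U$, I would take continuous compactly supported functions $0 \le f_j \uparrow \ind_U$ pointwise. The sums $\hat{f}_j^{\SV}(\Lambda)$ then increase to $\hat{\ind}_U^{\SV}(\Lambda)$ for every $\Lambda$, since each sum is over the finitely many primitive vectors in a fixed bounded set. Monotone convergence on both sides gives the formula for $U$. This also shows that $\hat{\ind}_U^{\SV}$ is measurable, as a pointwise limit of continuous functions.

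For a compact set $K$, I would either take $f_j \downarrow \ind_K$ with $0 \le f_j \le \ind_{\B_r}$, using dominated convergence with the integrable majorant from the open-set case, or simply take differences. A monotone class ($\pi$--$\lambda$) argument then extends the identity to all bounded Borel sets. Both sides are countably additive in $B$ on subsets of a fixed ball: the left side by monotone convergence applied to the counting sums, and the right side trivially. They agree on the $\pi$-system of bounded open sets, so they agree on all bounded Borel sets.

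The only point needing care is measurability and finiteness of $\Lambda \mapsto |\Lambda_{\prim} \cap B|$, which the monotone limits handle. If $B$ is merely a bounded Lebesgue-measurable set rather than Borel, one sandwiches it between Borel sets of equal volume; the difference is a null set $Z$. Then $\E|\Lambda_{\prim} \cap Z| = 0$, so the count is a.e. unaffected.
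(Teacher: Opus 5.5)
Your proposal is correct and is essentially the paper's argument: the paper obtains the corollary by applying Siegel's mean value theorem to $f = \ind_B$, justified by the preceding remark that $f \mapsto \int_{\lattice} \hat{f}^{\SV}\dmu$ defines a locally finite Borel measure and so extends from $C_c(\R^N)$ to bounded, compactly supported measurable functions. Your monotone-convergence and $\pi$--$\lambda$ argument makes that ``straightforward'' extension explicit; the compact-set step is not actually needed, since bounded open sets already form the generating $\pi$-system.
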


We will also need control of the tails of the measure $\mu$. In the case $N \ge 3$, there is an explicit estimate for the second moment of the Siegel transform, known as \textit{Rogers' second moment formula}.\footnote{A formula for the second moment of the Siegel transform exists in the $N=2$ case, but it is not as well-behaved, and working with it directly requires nontrivial integral estimates in some $KAK$-decomposition; see, e.g., \cite{FaiHan}. Rogers incorrectly states that his formula holds for $N=2$; this is noted in \cite{Sch}.}

\begin{thm}[\cite{Rog}, Lemma 1]\label{thm:Rogersformula}
    Suppose $N \ge 3$, and $f \in C_c(\R^N)$. Then
    \begin{equation*}
        \int_{\lattice} (\hat{f}^{\SV})^2\dmu = \frac{1}{\zeta(N)^2}\left(\,\int_{\R^N} f\dx \right)^2 + \frac{1}{\zeta(N)}\int_{\R^N}f(x)^2\dx + \frac{1}{\zeta(N)}\int_{\R^N}f(x)f(-x)\dx.
    \end{equation*}
\end{thm}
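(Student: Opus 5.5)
The plan is to expand the square and sort the resulting double sum over pairs of primitive vectors according to whether the two vectors are linearly dependent. Writing
\begin{equation*}
    (\hat{f}^{\SV}(\Lambda))^2 = \sum_{\vbf,\wbf \in \Lambda_{\prim}} f(\vbf)f(\wbf),
\end{equation*}
we split the pairs into those with $\wbf \in \R\vbf$ and those that are linearly independent. If $\vbf,\wbf$ are both primitive and dependent, then $\wbf = \pm\vbf$. The dependent part is therefore $\hat{g}_1^{\SV}(\Lambda) + \hat{g}_2^{\SV}(\Lambda)$ with $g_1 = f^2$ and $g_2(x) = f(x)f(-x)$. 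Integrating and applying Siegel's mean value theorem (Theorem \ref{thm:Siegelsformula}) gives exactly the two terms $\frac{1}{\zeta(N)}\int f^2$ and $\frac{1}{\zeta(N)}\int f(x)f(-x)\dx$. It remains to show that the independent part integrates to $\zeta(N)^{-2}(\int f)^2$.

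For the independent part, consider for $F \in C_c(\R^N \times \R^N)$ the functional
\begin{equation*}
    \nu(F) = \int_{\lattice} \sum_{\substack{\vbf,\wbf \in \Lambda_{\prim} \\ \text{lin. indep.}}} F(\vbf,\wbf)\dmu(\Lambda).
\end{equation*}
This is a positive functional, and since $\mu$ is $G$-invariant it is invariant under the diagonal action of $G$ on $\R^N \times \R^N$. I first check that it is finite, i.e.\ that $\nu$ is a locally finite Radon measure. For $F$ supported in $\B_r\times\B_r$, the inner sum is bounded by $(\#\Lambda \cap \B_r)^2$. By Theorems \ref{thm:minksecond} and \ref{thm:succminest}, $\#\Lambda \cap \B_r \lesssim_r \alpha(\Lambda)$. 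Standard cusp volume estimates give $\alpha \in L^p(\mu)$ for $p<N$. Hence $\alpha^2$ is integrable when $N \ge 3$, and this is the first place where $N\ge 3$ enters.

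The key structural point is that for $N \ge 3$ the group $G=\SL_N(\R)$ acts transitively on the set $U$ of linearly independent pairs, which is open and of full Lebesgue measure. Any pair can be moved to $(e_1,e_2)$ because the determinant can be absorbed in a third coordinate. For $N=2$ this fails: $\det(\vbf,\wbf)$ is an invariant, which is exactly why the formula breaks down there. The stabilizer of $(e_1,e_2)$ is unimodular, so up to scaling there is a unique $G$-invariant Radon measure on $U \cong G/\mathrm{Stab}$, and Lebesgue measure on $\R^{2N}$ restricted to $U$ is one. Since $\nu$ is supported on $U$, this gives $\nu = c\,\Leb$ for some constant $c \ge 0$. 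I expect this identification step to be the main point requiring care. One must check that $\nu$ gives no mass to $U^c$, which holds by construction, and that uniqueness of the invariant measure applies on the homogeneous space.

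Finally I would determine $c$ by testing against $F(\vbf,\wbf) = f(\vbf)\,g_T(\wbf)$, where $f \ge 0$ is fixed and $g_T = T^{-N}\ind_{\B_T}$. For each fixed $\Lambda$, the number of primitive vectors in $\B_T$ is asymptotic to $\Vol(\B_T)/\zeta(N)$ as $T\to\infty$. The excluded pairs with $\wbf = \pm\vbf$ contribute only $O(T^{-N})$. So pointwise,
\begin{equation*}
    \sum_{\wbf} g_T(\wbf) \to \frac{\Vol(\B_1)}{\zeta(N)}.
\end{equation*}
The same counting bound $\lesssim \alpha(\Lambda)$, uniform in $T\ge1$, together with $\hat{f}^{\SV} \lesssim \alpha$ and $\alpha^2 \in L^1$, lets me apply dominated convergence. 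This gives
\begin{equation*}
    \nu(F) \to \frac{\Vol(\B_1)}{\zeta(N)}\int \hat{f}^{\SV}\dmu = \frac{\Vol(\B_1)}{\zeta(N)^2}\int f .
\end{equation*}
On the other hand, $\nu(F) = c\,\Vol(\B_1)\int f$ for every $T$. Comparing the two yields $c = \zeta(N)^{-2}$, which completes the formula.
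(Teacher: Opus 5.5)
The paper does not prove this statement; it is quoted from Rogers \cite{Rog}. Your argument is correct, and it is the standard invariant-measure proof, which is a genuinely different, self-contained route.

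You split into two parts:
\begin{itemize}
\item The dependent primitive pairs are exactly $(\vbf,\pm\vbf)$, and Theorem \ref{thm:Siegelsformula} handles them.
\item The independent pairs define an $\SL_N(\R)$-invariant Radon measure carried by the open set $U$. For $N\ge3$, $U$ is a single orbit with unimodular stabilizer $\SL_{N-2}(\R)\ltimes\R^{2(N-2)}$, so this measure is $c\cdot\mathrm{Leb}$. That also makes testing against the discontinuous $g_T$ legitimate.
\end{itemize}

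Two steps deserve a line of justification.
\begin{itemize}
\item The bound $T^{-N}|\Lambda\cap\B_T|\lesssim\alpha(\Lambda)$, uniform in $T\ge1$, follows from Theorem \ref{thm:succminest} after rescaling: $|\Lambda\cap\B_T|\lesssim\prod_i(T/\lambda_i(\Lambda)+1)\le T^N\prod_i(1/\lambda_i(\Lambda)+1)$. The last product is $\lesssim\alpha(\Lambda)$ by Minkowski's second theorem applied to the span of the minima that are $<1$, as in Proposition \ref{prop:counttocusp}.
\item The integrability $\alpha\in L^2(\mu)$ should be taken from the Siegel-set upper bound in Theorem \ref{thm:cuspvolume}, not from Appendix \ref{section:cuspvolumes}. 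The appendix's lower bound (Lemma \ref{lemma:cuspvolumelowerbound3+}) itself uses Rogers' formula. Since you only need the upper tail bound, there is no circularity.
\end{itemize}

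As for what each route buys:
\begin{itemize}
\item Your route is self-contained given Siegel's theorem, reduction theory, and the pointwise asymptotic $T^{-N}|\Lambda_{\prim}\cap\B_T|\to\Vol(\B_1)/\zeta(N)$.
\item It explains structurally why $N\ge3$ is needed. For $N=2$ one has $\det(\vbf,\wbf)\in\Z$, so the independent-pair measure is singular to Lebesgue measure.
\item It adapts to the other homogeneous settings mentioned in the introduction.
\item Rogers' work, as the paper notes in Remark \ref{rmk:integrability}, also supplies the higher integer moments $p\le N-1$. Your argument would reach those only after a more delicate orbit decomposition of dependent $p$-tuples.
\end{itemize}
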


\begin{cor}\label{cor:variancebound}
    Suppose $N \ge 3$, and $B \subseteq \R^N$ is a bounded set. Let $X = \hat{\ind}_B^{\SV}$. Then
    \begin{equation*}
        \Var(X) \le 2\E[X].
    \end{equation*}
\end{cor}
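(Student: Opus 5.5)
We apply Rogers' second moment formula (Theorem \ref{thm:Rogersformula}) to $f = \ind_B$. Let $X = \hat{\ind}_B^{\SV}$; since $B$ is bounded, $\ind_B$ is bounded, measurable and compactly supported. As remarked after Siegel's mean value theorem, both Theorem \ref{thm:Siegelsformula} and Theorem \ref{thm:Rogersformula} extend from $C_c(\R^N)$ to such functions. The extension argument is standard. One approximates $\ind_B$ monotonically by continuous compactly supported functions, for instance from above by $\ind_U$-approximants for open $U \supseteq B$, and from below on compact subsets. Monotone or dominated convergence then passes both identities to the limit, since all integrands are nonnegative and the limiting second-moment expression is finite.

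With the formulas available for $f = \ind_B$, the computation is direct. Siegel's formula gives
\begin{equation*}
    \E[X] = \frac{\Vol(B)}{\zeta(N)}.
\end{equation*}
Rogers' formula gives
\begin{equation*}
    \E[X^2] = \frac{\Vol(B)^2}{\zeta(N)^2} + \frac{\Vol(B)}{\zeta(N)} + \frac{\Vol(B \cap (-B))}{\zeta(N)}.
\end{equation*}
The first term is exactly $\E[X]^2$, so subtracting it gives
\begin{equation*}
    \Var(X) = \frac{\Vol(B) + \Vol(B \cap (-B))}{\zeta(N)}.
\end{equation*}
Since $\Vol(B\cap(-B)) \le \Vol(B)$, we obtain $\Var(X) \le 2\Vol(B)/\zeta(N) = 2\E[X]$. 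Note that $\int \ind_B^2 = \Vol(B)$ and $\int \ind_B(x)\ind_B(-x)\,dx = \Vol(B\cap(-B))$.

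The only step needing care is the passage from $C_c$ to indicator functions. I would handle it as follows. For compact $B$, take continuous $f_j \downarrow \ind_B$ with supports in a fixed ball. Then $\hat{f_j}^{\SV} \downarrow X$ pointwise, because each lattice has only finitely many points in that ball, so each sum is finite. Dominated convergence, with dominating function $\hat{f_1}^{\SV} \in L^2$ by Rogers, passes both formulas to the limit. For general bounded measurable $B$, one uses inner and outer regularity of the locally finite measures $f \mapsto \E[\hat f^{\SV}]$ and $(f,g) \mapsto \E[\hat f^{\SV}\hat g^{\SV}]$. Alternatively, the inequality need only be proved for the sets actually used later, such as open or compact ones. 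Everything else is algebra.
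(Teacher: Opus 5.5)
Your proof is correct and is the same as the paper's: apply Rogers' formula to $\ind_B$, subtract $\E[X]^2$ obtained from Siegel's formula, and use $\Vol(B\cap(-B)) \le \Vol(B)$. Your extension from $C_c(\R^N)$ to indicator functions, with $f_j \downarrow \ind_B$ and $\hat{f}_1^{\SV}$ as an $L^2$ dominating function, fills in a step the paper only asserts in its remark after Siegel's theorem.
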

\begin{proof}
    We compute
    \begin{align*}
        \Var(X) &= \E[X^2] - \E[X]^2 \\
        &= \frac{1}{\zeta(N)^2}\Vol(B)^2 + \frac{1}{\zeta(N)}\left( \Vol(B) + \Vol(B \cap -B)\right) - \frac{1}{\zeta(N)^2}\Vol(B)^2 \\
        &\le 2\zeta(N)^{-1}\Vol(B) \\
        &= 2\E[X].
    \end{align*}
\end{proof}

\subsection{Height functions, thick-thin decompositions, and cusps}\label{subsection:heightfunctions}

The results of this subsection and the following are adapted from Eskin-Margulis-Mozes in \cite{EskMarMoz}.

For $1 \le p \le N-1$, we define the \textit{height functions} $\alpha_p \colon \lattice \to [0,\infty)$ by
\begin{equation*}
    \alpha_p(\Lambda) = \sup\left\{ \frac{1}{\Covol_V(\Lambda \cap V)} : V \in \Gr(p,N) \right\},
\end{equation*}
where by convention $\Covol_V(\Lambda \cap V) = \infty$ if $\Lambda \cap V$ is not a lattice in $V$. We additionally define $\alpha = \max_p \alpha_p$.

The (inverse of the) function $\alpha_1$ is typically called the \textit{systole}, and records the shortest nonzero vector of a lattice. The following result is standard, and follows from a packing argument.

\begin{lemma}\label{lemma:sysbounds}
    The inequality
    \begin{equation*}
        |\Lambda \cap \B_r^N(x)| \lesssim (r\alpha_1(\Lambda))^N
    \end{equation*}
    holds uniformly in $r \gtrsim 1$, $\Lambda \in \lattice$, and $x \in \R^N$.
\end{lemma}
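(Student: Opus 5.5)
The plan is a packing argument built on the minimal vector length $\alpha_1(\Lambda)^{-1}$. By definition, every nonzero vector of $\Lambda$ has length at least $s := \alpha_1(\Lambda)^{-1}$. Hence the open balls $\B_{s/2}^N(\vbf)$, for $\vbf \in \Lambda$, are pairwise disjoint. If $\vbf \in \Lambda \cap \B_r^N(x)$, then $\B_{s/2}^N(\vbf) \subseteq \B_{r+s/2}^N(x)$. Comparing volumes then gives
\begin{equation*}
    |\Lambda \cap \B_r^N(x)| \cdot (s/2)^N \le (r+s/2)^N,
    \quad\text{so}\quad
    |\Lambda \cap \B_r^N(x)| \le \left(\frac{2r}{s}+1\right)^N = \left(2r\alpha_1(\Lambda)+1\right)^N.
\end{equation*}

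The remaining step is to absorb the $+1$, and this is where the hypothesis $r \gtrsim 1$ enters. I would use the uniform lower bound $\alpha_1(\Lambda) \gtrsim 1$ on $\lattice$. By Minkowski's first theorem (or Theorem~\ref{thm:minksecond} with $K = \B_1^N$), every unimodular lattice has a nonzero vector of length $\lesssim_N 1$, so $s \lesssim 1$ and $\alpha_1(\Lambda) \gtrsim 1$. Therefore $r\alpha_1(\Lambda) \gtrsim 1$ when $r \gtrsim 1$. This gives $2r\alpha_1(\Lambda) + 1 \lesssim r\alpha_1(\Lambda)$, and hence $|\Lambda \cap \B_r^N(x)| \lesssim (r\alpha_1(\Lambda))^N$, uniformly in $x$, $\Lambda$ and $r \gtrsim 1$, with constants depending only on $N$.

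There is no real obstacle here. The only care needed is to apply the uniform lower bound on $\alpha_1$ over $\lattice$ so that the additive constant is absorbed. Implicitly, the implied constant in ``$r \gtrsim 1$'' should be at least of the order of the Minkowski constant; if it is smaller, the loss is only a constant depending on $N$.
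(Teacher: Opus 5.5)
Your proof is correct and is exactly the standard packing argument the paper invokes without writing out: disjoint balls of radius $\alpha_1(\Lambda)^{-1}/2$ around lattice points give $|\Lambda \cap \B_r^N(x)| \le (2r\alpha_1(\Lambda)+1)^N$. The $+1$ is then absorbed because Minkowski's first theorem gives $\alpha_1(\Lambda) \gtrsim 1$ on $\lattice$, and you correctly identify this together with $r \gtrsim 1$ as the only place the hypothesis on $r$ is needed.
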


For each $\epsilon > 0$, we have an associated \textit{thick-thin decomposition} $\lattice = \lattice^{\thick,\epsilon} \uplus \lattice^{\thin,\epsilon}$ of the space of lattices via
\begin{equation*}
    \lattice^{\thick,\epsilon} = \left\{\Lambda \in \lattice : \alpha(\Lambda) \le \epsilon^{-1} \right\} \quad \mathrm{and} \quad \lattice^{\thin,\epsilon} = \left\{ \Lambda \in \lattice : \alpha(\Lambda) > \epsilon^{-1} \right\}.
\end{equation*}

\begin{thm}[Mahler's Compactness Theorem]
    The sets $(\lattice^{\thick,\epsilon})_{\epsilon > 0}$ form a compact exhaustion of $\lattice$.
\end{thm}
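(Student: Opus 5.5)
The statement has three parts. The sets $\lattice^{\thick,\epsilon}$ increase as $\epsilon \downarrow 0$, which is immediate from the definition. Their union is all of $\lattice$, i.e.\ $\alpha(\Lambda) < \infty$ for every $\Lambda$. And each $\lattice^{\thick,\epsilon}$ is compact in the quotient topology on $G/\Gamma$. I would prove compactness by showing that $\lattice^{\thick,\epsilon}$ is closed and is contained in the image of a compact subset of $G$ under the continuous projection $g \mapsto g\Z^N$.

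\emph{Finiteness of $\alpha$.} Fix $\Lambda$ and $1 \le p \le N-1$. Any $V \in \Gr(p,N)$ with $\Lambda \cap V$ a lattice in $V$ contains $p$ linearly independent vectors of $\Lambda$. Applying Minkowski's Second Theorem (Theorem \ref{thm:minksecond}) inside $V$ with $K$ the unit ball of $V$ gives
\begin{equation*}
    \Covol_V(\Lambda \cap V) \gtrsim \prod_{i=1}^p \lambda_i(\Lambda \cap V) \ge \lambda_1(\Lambda)^p.
\end{equation*}
Since $\Lambda$ is discrete, $\lambda_1(\Lambda) > 0$, so $\alpha_p(\Lambda) \lesssim \lambda_1(\Lambda)^{-p} < \infty$.

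\emph{Closedness.} I will show that $\{\alpha > \epsilon^{-1}\}$ is open. Suppose $\Lambda = g\Z^N$ has a $p$-dimensional sublattice of covolume $< \epsilon$. That sublattice is spanned by $g\vbf_1,\dots,g\vbf_p$ for some integer vectors $\vbf_j$. Its covolume is $\det\big((g\vbf_i\cdot g\vbf_j)_{i,j}\big)^{1/2}$, which is continuous in $g$. So for $g'$ in a neighbourhood of $g$, the vectors $g'\vbf_j$ span a sublattice of $g'\Z^N$ with covolume still $< \epsilon$. Since the projection $G \to G/\Gamma$ is open, a neighbourhood of $g$ maps to a neighbourhood of $\Lambda$, and this gives openness. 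Hence $\lattice^{\thick,\epsilon}$ is closed.

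\emph{Boundedness.} This is the main step. If $\Lambda \in \lattice^{\thick,\epsilon}$, then $\alpha_1(\Lambda) \le \epsilon^{-1}$, so $\lambda_1(\Lambda) \ge \epsilon$ with respect to the Euclidean ball. Minkowski's Second Theorem then gives
\begin{equation*}
    \lambda_N(\Lambda) \lesssim \lambda_1(\Lambda)^{-(N-1)} \le \epsilon^{-(N-1)}.
\end{equation*}
Next I need a basis of $\Lambda$, not merely $N$ independent vectors, of controlled length. The obstacle is that the vectors realising the successive minima need not form a basis. The standard reduction (e.g.\ in \cite{Cas}) produces a basis $\bbf_1,\dots,\bbf_N$ with $|\bbf_i| \le \max(1,i/2)\,\lambda_i(\Lambda) \lesssim \epsilon^{-(N-1)}$. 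This basis is built inductively by choosing each $\bbf_i$ in the primitive coset of the span of the previous ones, using a short vector within a fundamental parallelepiped. Let $g$ be the matrix with columns $\bbf_i$. Then $\Lambda = g\Z^N$, $\det g = 1$, and every entry of $g$ is $O_\epsilon(1)$. So $g$ lies in the compact set $\{g \in \SL_N(\R) : \norm{g} \lesssim \epsilon^{-(N-1)}\}$, which is closed and bounded in $\R^{N^2}$.

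\emph{Conclusion.} The image of that compact set under the continuous projection is compact, and it contains $\lattice^{\thick,\epsilon}$. A closed subset of a compact set is compact, so each $\lattice^{\thick,\epsilon}$ is compact. Together with nestedness and exhaustion, this proves the theorem.
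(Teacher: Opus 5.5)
Your proof is correct. The paper gives no proof of this statement: it quotes Mahler's criterion as a classical fact and uses it as a black box. So what you have written is the standard geometry-of-numbers argument rather than a variant of anything in the text.

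The nearest internal tool is the Siegel set $S=KA_2U_1$ from the proof of Theorem \ref{thm:cuspvolume}. The first $p$ columns of $kau$ span a sublattice of covolume $a_1\cdots a_p$, so $\alpha(kau\Z^N)\le\epsilon^{-1}$ forces $a_1\cdots a_p\ge\epsilon$ for every $p$. Together with $a_i\le 2a_{i+1}$ and $\prod_i a_i=1$, this confines $a$ to a compact subset of $A$, so $\lattice^{\thick,\epsilon}$ lies in the image of a compact piece of $S$. That route replaces your reduced-basis lemma with the reduction-theory fact $S\Gamma=G$, which is essentially the same input. Your version has the small advantage of using only $\alpha_1$, and your finiteness step shows in passing that $\alpha_p\lesssim\alpha_1^p$.

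A few small points to tidy:
\begin{itemize}
\item The matrix whose columns are your reduced basis has determinant $\pm1$, not necessarily $1$. Negate the first basis vector if needed.
\item In the closedness step, say explicitly that $g'\Z^N\cap V'$, with $V'$ the span of the $g'\vbf_j$, contains the lattice spanned by the $g'\vbf_j$. Its covolume is therefore no larger, which is what $\alpha_p$ actually measures.
\item If your definition of compact exhaustion also requires $\lattice^{\thick,\epsilon}\subseteq\mathrm{int}\,\lattice^{\thick,\epsilon'}$ for $\epsilon'<\epsilon$, lower semicontinuity of $\alpha$ is not enough.
\end{itemize}
A single estimate handles both semicontinuity directions. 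For $h\in\SL_N(\R)$ and any rank-$p$ sublattice $\Delta$, one has $\Covol(h\Delta)\ge\|h^{-1}\|^{-p}\Covol(\Delta)$, and $\|h^{-1}\|\ge 1$. Hence
\[
\|h\|^{-(N-1)}\alpha(\Lambda)\le\alpha(h\Lambda)\le\|h^{-1}\|^{N-1}\alpha(\Lambda).
\]
So $\alpha$ is continuous on $\lattice$; this is the same comparison the paper uses in Proposition \ref{prop:asloglaw}. Continuity gives closedness and the interior condition at once.
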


\begin{figure}[H]
    \centering
    \includegraphics[scale=0.7]{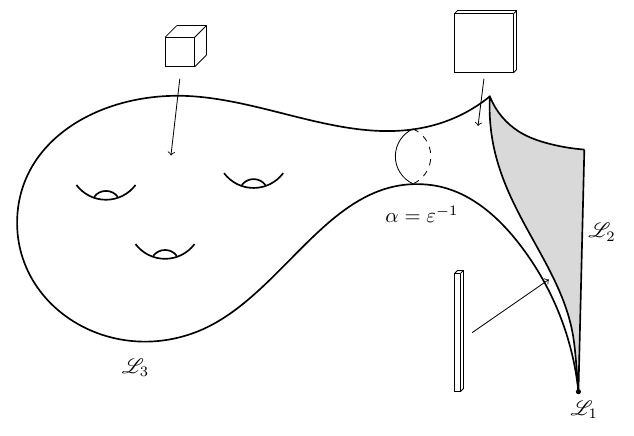}
    \caption{The Thick-Thin Decomposition of $\Ls_3$}
    \label{fig:placeholder}
\end{figure}

To prove Theorem \ref{thm:LocalMTFailsalways} with $\lessapprox 1$-error bounds, we will require the following (semi)-quantitative version of Mahler compactness.

\begin{thm}\label{thm:cuspvolume}
    For all $N \ge 3$ and all $\epsilon > 0$, we have
    \begin{equation*}
        \mu(\lattice^{\thin,\epsilon}) \asymp \epsilon^N.
    \end{equation*}
\end{thm}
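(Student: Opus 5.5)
The plan is to prove the two bounds separately. The upper bound $\mu(\lattice^{\thin,\epsilon}) \lesssim \epsilon^N$ will come from a first-moment (Siegel) count over all dimensions $p$. The lower bound $\gtrsim \epsilon^N$ will come from the systole alone, using Rogers' second moment formula (Theorem \ref{thm:Rogersformula}) via a second-moment argument. Since $\alpha = \max_p \alpha_p$, we have $\lattice^{\thin,\epsilon} = \bigcup_{p=1}^{N-1}\{\alpha_p > \epsilon^{-1}\}$. It therefore suffices to show $\mu(\alpha_1 > \epsilon^{-1}) \gtrsim \epsilon^N$ and $\mu(\alpha_p > \epsilon^{-1}) \lesssim \epsilon^N$ for each $p$. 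We may assume $\epsilon$ is small, since for $\epsilon \gtrsim 1$ both sides are $\asymp 1$; the left side is $\asymp 1$ there because $\alpha_1 \ge 1$ always, by Minkowski.

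\textbf{Lower bound.} Let $B = \B_\epsilon^N$ and $X = \hat{\ind}_B^{\SV}$. Then $\{X > 0\} \subseteq \{\alpha_1 \ge \epsilon^{-1}\}$. By Siegel, $\E X = \zeta(N)^{-1}\Vol(B) \asymp \epsilon^N$. By Corollary \ref{cor:variancebound}, $\E X^2 \le \E[X]^2 + 2\E X \lesssim \epsilon^N$ for small $\epsilon$. Cauchy--Schwarz (Paley--Zygmund) then gives
\begin{equation*}
\mu(X > 0) \ge \frac{(\E X)^2}{\E X^2} \gtrsim \epsilon^N.
\end{equation*}
Replacing $\epsilon$ by $\epsilon/2$ handles the strict inequality in the definition of the thin part.

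\textbf{Upper bound.} For each $p$, a lattice with $\alpha_p(\Lambda) > \epsilon^{-1}$ contains a primitive sublattice of rank $p$ and covolume $< \epsilon$. The plan is to bound its probability by the expected number of such sublattices. That expected number is a Siegel-type mean over the Grassmannian of primitive $p$-sublattices: by unfolding as in Siegel's proof (or Rogers' higher-rank mean value formula), it equals $c_{N,p}$ times the Haar volume of $\{$rank-$p$ lattices of covolume $< \epsilon\}$, which scales as $\epsilon^N$. Indeed, the count of $p$-sublattices of $\Z^N$ with covolume $\le T$ grows like $T^N$ (Schmidt). This gives $\mu(\alpha_p > \epsilon^{-1}) \lesssim \epsilon^N$. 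To avoid quoting the higher-rank formula, an alternative route reduces to $p = 1$. If $\Delta \le \Lambda$ is a rank-$p$ sublattice with covolume $< \epsilon$, apply Minkowski's second theorem (Theorem \ref{thm:minksecond}) inside $\Delta$: either $\Lambda$ has a vector of length $\lesssim \epsilon^{1/p}$, or the exterior power $\wedge^p\Lambda$ has a short primitive decomposable vector. One then uses the reduction-theory fact that the $\mu$-measure of the cusp region where the $p$-th height exceeds $T$ is $\asymp T^{-N}$. This fact follows from the Siegel-domain description of $G/\Gamma$: in Iwasawa coordinates $a = \diag(a_1,\dots,a_N)$, the Haar density is $\prod_{i<j} a_i/a_j$, and integrating over $a_1\cdots a_p < \epsilon$ gives $\epsilon^N$, because the modular character restricted to the maximal parabolic of type $(p,N-p)$ is $(a_1\cdots a_p)^{N}$.

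\textbf{Main obstacle.} I expect the hardest step to be the upper bound for $2 \le p \le N-2$. The cleanest path is the Siegel-set computation: parametrize the cusp by $P_p(\R)$ times a compact set, note that the fiber over a given $\Delta$-covolume $s$ has measure $\asymp s^{N-1}\,ds$ by the modular character, and integrate from $0$ to $\epsilon$. The finite-overlap property of Siegel sets bounds the multiplicity. The hypothesis $N \ge 3$ is used only for Rogers' formula in the lower bound.
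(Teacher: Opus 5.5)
Your proof is correct, but its lower bound takes a different route from the paper's proof of this theorem.

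\textbf{The paper's proof.} The paper proves both directions inside the Siegel set $S=KA_2U_1$. It writes $au=u'a$ with $u'$ of bounded condition number, which replaces $\alpha(kau\Z^N)$ by $\max_p(a_1\cdots a_p)^{-1}$. It then substitutes $b_i=a_i/a_{i+1}$ and concludes from $c_i/q_i\ge N$. The lower bound comes from integrating the $p=1$ region explicitly, and this uses the finite-overlap property of $S$.

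\textbf{Your lower bound.} You apply Siegel's and Rogers' formulas, with Cauchy--Schwarz, to the primitive count in $\B_\epsilon$. This is exactly Lemma \ref{lemma:cuspvolumelowerbound3+} of the appendix. It needs no reduction theory and gives the sharp constant $\pi^{N/2}/(N\Gamma(N/2)\zeta(N))$. The cost is that it requires $N\ge 3$, which the Siegel-set argument does not.

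\textbf{Your upper bound.} Of your two routes, the first-moment count of primitive rank-$p$ sublattices is the unfolding of Lemmas \ref{lemma:pvolumesparabolic} and \ref{lemma:pcuspvolumeupper}. There your modular character appears as the Jacobian $\rho(a^t)=t^N$. The resulting constant $C_{p,q}$ avoids the $C^{N^3}$ loss of Remark \ref{rmk:suboptimalcuspbound}. Your other route, the Siegel-set computation, is the paper's. Your observation that $\prod_{i<j}a_i/a_j$ contains the factor $\prod_{i\le p<j}a_i/a_j=(a_1\cdots a_p)^N$ shows the exponent more cleanly than the $b_i$-substitution. Dropping the single constraint $a_p\le 2a_{p+1}$ that couples the two blocks bounds the integral by $\lesssim\epsilon^N$ times the (finite) Siegel-set volumes for $\SL_p$ and $\SL_q$. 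For this upper bound only $S\Gamma=G$ is needed; finite overlap plays no role.

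\textbf{Corrections.}
\begin{itemize}
\item Discard the proposed ``reduction to $p=1$.'' Minkowski inside $\Delta$ always produces a vector of length $\lesssim\epsilon^{1/p}$, so your dichotomy is vacuous. Moreover, the event $\alpha_1\gtrsim\epsilon^{-1/p}$ has measure $\asymp\epsilon^{N/p}$, which is far too large when $p\ge 2$. The ``reduction-theory fact'' you then invoke is the theorem itself.
\item Minkowski gives only $\alpha_1\gtrsim_N 1$, not $\alpha_1\ge 1$, since Hermite's constant exceeds $1$.
\item Since $\mu\le 1$ while $\epsilon^N\to\infty$, the estimate can only hold for $\epsilon\lesssim 1$, as in Theorem \ref{thm:cuspvolumequant}. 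Your reduction to small $\epsilon$ should be read that way.
\end{itemize}
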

\begin{proof}
    We present here a simple proof which does not recover the optimal leading constant: a slightly more sophisticated argument is given in Appendix \ref{section:cuspvolumes} to get the error bound stated in Theorem \ref{thm:LocalMTFailsalways}.
    
    Consider the Iwasawa decomposition $G = KAU$, where $K = \SO(N)$, $A \le G$ is the subgroup of positive diagonal matrices, and $U \le G$ is the subgroup of strictly upper triangular matrices (see e.g. \cite{Mor}). The Haar measure on $G$ decomposes as
    \begin{equation*}
        \int f(g)\dg = \int_K \int_A \int_U f(kau)\prod_{i < j} \frac{a_i}{a_j} \du\da\dk.
    \end{equation*}
    We define a \textit{Siegel set} $S \subseteq G$ by $S = KA_2U_1$, where
    \begin{equation*}
        A_2 = \{a \in A : a_i \le 2a_{i+1}\} \quad \mathrm{and} \quad U_1 = \left\{ u \in U : |u_{ij}| \le 1 \right\}.
    \end{equation*}
    It is well-known that $S\Gamma = G$, and in fact $S$ is a coarse fundamental domain for $\Gamma$ (see e.g. \cite[Ch. 7]{Mor}). Thus, it suffices to estimate the volume of
    \begin{equation*}
        S(\epsilon) = \left\{ s \in S : \alpha(s\Z^N) > \epsilon^{-1} \right\}.
    \end{equation*}
    As $\alpha$ is $K$-invariant (and $K$ is compact), the $K$-integral is irrelevant. Moreover, since the diagonal entries of $a \in A_2$ are weakly decreasing and have product 1, the set $U_1$ is weakly stable under the $A$-action, i.e., $A_2U_1 \subseteq U_{O(1)}A_2$. The condition numbers of $u \in U_{O(1)}$ are uniformly bounded, and so in fact
    \begin{equation*}
        \int_K \int_A \int_U \ind_{S(\epsilon)}(kau)\prod_{i < j} \frac{a_i}{a_j} \du\da\dk \asymp \int_{\{a \in A_2 : a_1 \cdots a_p < \epsilon \text{ for some $p$}\}}\prod_{i < j} \frac{a_i}{a_j} \da.
    \end{equation*}
    The Haar measure on $A$ is given by $da = \prod_{i=1}^{N-1} \frac{\da_i}{a_i}$, so we are reduced to estimating
    \begin{equation*}
        \sum_{p=1}^{N-1}\int_{\{a \in A_2 : a_1 \cdots a_p < \epsilon\}}\left( \prod_{i < j} \frac{a_i}{a_j} \right)\left( \prod_{i=1}^{N-1} a_i \right)^{-1}\da_1 \cdots \da_{N-1}.
    \end{equation*}
    We perform the substitution $b_i = a_i/a_{i+1}$. The Jacobian of this change of variables is $\propto \frac{1}{a_1}$, and moreover $\prod_{i < j} \frac{a_i}{a_j} = \prod_{i=1}^{N-1}b_i^{i(N-i)}$. Since $\prod_{i=1}^N a_i = 1$, we see that
    \begin{equation*}
        \int_{\{a \in A_2 : a_1 \cdots a_p < \epsilon\}}\left( \prod_{i < j} \frac{a_i}{a_j} \right)\left( \prod_{i=1}^{N-1} a_i \right)^{-1}\da_1 \cdots \da_{N-1} \lesssim \int_{B_p(\epsilon)}\prod_{i=1}^{N-1}b_i^{c_i-1}\db_1 \cdots \db_{N-1},
    \end{equation*}
    where $B_p(\epsilon) = \{b \in [0,2]^{N-1} : a_1 \cdots a_p \le \epsilon\}$, and $c_i = i(N-i)$. We transform this into a product over the $b_i$ via
    \begin{equation*}
        a_1 \cdots a_k = \prod_{i=1}^{N-1} b_i^{q_i}, \quad \mathrm{where} \quad q_i = \left\{ \begin{array}{cc}
            i(N-p)/N, & i \le p \\
            p(N-i)/N, & i > p
        \end{array} \right.
    \end{equation*}
    Note that $\frac{c_i}{q_i} \ge N$ uniformly in $i$, which implies $\int_{B_p(\epsilon)} \prod_i b_i^{c_i-1}\db \lesssim \epsilon^N$, as desired.

    The corresponding lower bound $\gtrsim \epsilon^N$ follows from taking $p=1$ as above and explicitly integrating.
\end{proof}

\begin{rmk}\label{rmk:suboptimalcuspbound}
    By explicitly performing the integration, one sees that this argument yields
    \begin{equation*}
        \mu\left(\lattice^{\thin,\epsilon} \right) \le C^{N^3}\epsilon^N
    \end{equation*}
    for some $C \ge 1$. Theorem \ref{thm:cuspvolume} is essentially contained in \cite{EskMarMoz}, proof of Lemma 3.10, and can be proven in (at least) two alternate ways beyond the ones presented here; one can use equidistribution of random geodesics on $G/\Gamma$ alongside the logarithm laws of \cite{KleMar1}, or work in local period coordinates in the style of \cite{Doz}.
\end{rmk}

\subsection{Cusp excursions and counting}

In this subsection, we will relate the height $\alpha(\Lambda)$ of a lattice $\Lambda$ to the quantity $|\Lambda \cap \B_1^N|$.

\begin{prop}\label{prop:counttocusp}
    If $|\Lambda \cap \B_1^N| \ge T$, then $\Lambda \in \lattice^{\thin,O(T^{-1})}$.
\end{prop}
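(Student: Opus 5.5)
Throughout, we may assume $T$ is larger than a suitable dimensional constant; for bounded $T$ the claim is empty, since $\alpha(\Lambda) \ge 1$ always (take $p = N-1$ with $V$ a rational hyperplane of covolume at most $1$, or simply note the implied constant can absorb it). The plan is to use Henk's bound, Theorem~\ref{thm:succminest}, applied with $K = \B_1^N$, together with Minkowski's second theorem, Theorem~\ref{thm:minksecond}. Write $\lambda_1 \le \cdots \le \lambda_N$ for the successive minima of $\Lambda$ with respect to $\B_1^N$.

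The first step is to extract a small successive minimum. Theorem~\ref{thm:succminest} gives
\begin{equation*}
    T \le |\Lambda \cap \B_1^N| \lesssim \prod_{i=1}^N \left( \frac{1}{\lambda_i} + 1 \right) \le 2^N \prod_{i : \lambda_i < 1} \lambda_i^{-1}.
\end{equation*}
If no $\lambda_i$ is less than $1$, the right-hand side is $O(1)$, which contradicts $T \gg 1$. Hence there is a largest index $p$ with $\lambda_p < 1$. Since $\lambda_N \ge \Covol(\Lambda)^{1/N}$ up to constants by Minkowski, while the $\lambda_i$ themselves are not bounded below, we cannot immediately rule out $p = N$. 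We therefore first treat $p \le N-1$, and then show $p = N$ cannot occur. With $p \le N-1$ we get $\prod_{i \le p} \lambda_i \lesssim T^{-1}$.

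The second step converts this into a sublattice of small covolume. Choose linearly independent vectors $\vbf_1, \dots, \vbf_p \in \Lambda$ with $|\vbf_i| = \lambda_i$, and let $V = \mathrm{span}(\vbf_1, \dots, \vbf_p)$. Then $\Lambda \cap V$ is a lattice in $V$. Its successive minima with respect to $\B_1^N \cap V$ are exactly $\lambda_1, \dots, \lambda_p$, because the minimizing vectors can be taken in $V$. Applying Theorem~\ref{thm:minksecond} inside $V$ gives
\begin{equation*}
    \Covol_V(\Lambda \cap V) \asymp \prod_{i \le p} \lambda_i \cdot \Vol_p(\B_1^p) \lesssim T^{-1}.
\end{equation*}
Therefore $\alpha(\Lambda) \ge \alpha_p(\Lambda) \gtrsim T$, that is, $\Lambda \in \lattice^{\thin, O(T^{-1})}$.

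The main obstacle is excluding the case $p = N$, i.e.\ all $\lambda_i < 1$, which the second step does not cover. In this case Minkowski on the whole of $\R^N$ gives $\prod_{i=1}^N \lambda_i \asymp 1$. Set $p' = N-1$. Then
\begin{equation*}
    \prod_{i \le N-1} \lambda_i \asymp \lambda_N^{-1}.
\end{equation*}
Henk's bound in this case reads $T \lesssim \prod_i \lambda_i^{-1} \asymp 1$, which contradicts $T \gg 1$. So this case only arises for bounded $T$, where the claim is trivial. I therefore expect the only delicate point to be bookkeeping: the implied constants are dimensional, and one must check that the Minkowski constants for the sublattice $\Lambda \cap V$ are uniform in $p$.
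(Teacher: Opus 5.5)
Your proposal is correct and is essentially the paper's argument: Henk's bound combined with Minkowski's second theorem, applied to the sublattice spanned by the short vectors. You take the successive minima of $\Lambda$ and restrict to the span $V$ of those below $1$, whereas the paper works directly with the sublattice generated by $\Lambda\cap\B_1^N$. Your explicit exclusion of the full-rank case $p=N$ makes precise a point the paper leaves implicit.

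One small slip: $\alpha(\Lambda)\ge 1$ does not hold for every unimodular $\Lambda$. The bounded-$T$ case only needs $\alpha(\Lambda)\ge\alpha_1(\Lambda)\gtrsim 1$, which follows from Minkowski's first theorem.
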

\begin{proof}
    Let $\Delta \subseteq \Lambda$ be the sublattice spanned by $\Lambda \cap \B_1^N$, and write $d = \dim \Delta$. Consider\footnote{By slight abuse of notation.} the ball $\B_1^d = \B_1^N \cap \Span_\R(\Delta)$, and let $\lambda_1,\dots,\lambda_d$ be the successive minima associated to $\B_1^d$. Minkowski's second theorem implies
    \begin{equation*}
        \prod_{i=1}^d \lambda_i(\Delta) \asymp \Covol(\Delta).
    \end{equation*}
    Note that $\lambda_i(\Delta) \le 1$ for each $i$ by assumption. Therefore, by Theorem \ref{thm:succminest} we have
    \begin{equation*}
        T \le |\Lambda \cap \B_1^d| \lesssim  \left(\, \prod_{i=1}^d \lambda_i(\Delta) \right)^{-1}.
    \end{equation*}
    This implies
    \begin{equation*}
        \Covol(\Delta) \lesssim T^{-1},
    \end{equation*}
    which in turn implies $\alpha(\Lambda) \gtrsim T$, as desired.
\end{proof}

Conversely, we have the following.

\begin{prop}\label{prop:cusptocount}
    If $\Lambda \in \lattice^{\thin,\epsilon}$, then $|\Lambda \cap \B_1^N| \gtrsim \epsilon^{-1}$.
\end{prop}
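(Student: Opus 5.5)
If $\Lambda \in \lattice^{\thin,\epsilon}$, then by definition there is some $1 \le p \le N-1$ and a primitive sublattice $\Delta \le \Lambda$ of rank $p$, spanning $V = \Span_\R(\Delta)$, with $\Covol_V(\Delta) < \epsilon$. The plan is to count points of $\Delta$ alone inside the ball $\B_1^p = \B_1^N \cap V$. Since $\Delta \subseteq \Lambda$, we have $|\Lambda \cap \B_1^N| \ge |\Delta \cap \B_1^p|$, so it suffices to show $|\Delta \cap \B_1^p| \gtrsim \epsilon^{-1}$. We may assume $\epsilon \ll 1$; otherwise the claim is trivial, because $0 \in \Lambda \cap \B_1^N$.

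The natural tool is Theorem \ref{thm:VDC}, rescaled to a lattice of arbitrary covolume. Rescale $\Delta$ to the unimodular lattice $\Delta' = c^{-1}\Delta$ in $V \cong \R^p$, where $c = \Covol(\Delta)^{1/p}$. Then
\begin{equation*}
    |\Delta \cap \B_1^p| = |\Delta' \cap c^{-1}\B_1^p| \gtrsim \Vol(c^{-1}\B_1^p) \asymp \Covol(\Delta)^{-1} > \epsilon^{-1}.
\end{equation*}
This uses the van der Corput bound in dimension $p$, applied to a unimodular lattice in $\R^p$. The implied constants depend only on $p \le N$, so they are uniform.

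The one subtlety is that Theorem \ref{thm:VDC} as stated (a bound $\gtrsim \Vol(K)$ with no additive loss) is only meaningful when $\Vol(K)$ is large. Here that holds: $\Vol(c^{-1}\B_1^p) \asymp \epsilon^{-1} \gg 1$. Concretely, Minkowski's first theorem gives $\gtrsim \Vol(K)$ points for $\Vol(K) \ge 2^p$, by applying it to $K/j$ for each $j$ up to about $\Vol(K)^{1/p}$, or by Blichfeldt.

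If one prefers to avoid Theorem \ref{thm:VDC}, an alternative uses Minkowski's second theorem (Theorem \ref{thm:minksecond}) on $\Delta$ with $K = \B_1^p$. The successive minima $\lambda_i = \lambda_i(\Delta)$ satisfy $\prod_i \lambda_i \asymp \Covol(\Delta) < \epsilon$. Choose vectors $\vbf_i$ realizing the minima. The integer combinations $\sum_i m_i \vbf_i$ with $|m_i| \le (p\lambda_i)^{-1}$ all lie in $\B_1^p$, and they are distinct by linear independence. The number of such combinations is
\begin{equation*}
    \prod_{i=1}^p \left(2\lfloor (p\lambda_i)^{-1}\rfloor + 1\right),
\end{equation*}
and each factor is $\gtrsim \max(1, \lambda_i^{-1})$. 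The product is therefore $\gtrsim \prod_{i : \lambda_i \le 1}\lambda_i^{-1} \ge \prod_i \lambda_i^{-1} \gtrsim \epsilon^{-1}$, where the middle inequality holds because dropping the factors with $\lambda_i > 1$ only increases the product. This second route is the cleanest, and it handles all cases without any largeness assumption.
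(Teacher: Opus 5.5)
Your first route is the paper's proof. You take the sublattice $\Delta = \Lambda \cap V$ with $\Covol(\Delta) < \epsilon$, intersect $\B_1^N$ with its span, and apply Theorem~\ref{thm:VDC} in dimension $p$; the rescaling to a unimodular $\Delta'$ just makes the normalization explicit. Your alternative via Minkowski's second theorem and the explicit combinations $\sum_i m_i \vbf_i$ is also correct and self-contained.

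One small quibble with your aside: iterating Minkowski's first theorem on $K/j$ can keep returning multiples of a single short vector, so it only gives $\gtrsim \Vol(K)^{1/p}$ points. This is moot, though, since Blichfeldt, Theorem~\ref{thm:VDC} itself (valid for every $K$, because $0$ is counted) or your second route covers it.
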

\begin{proof}
    Suppose $\alpha_d(\Lambda) = \alpha(\Lambda)$, and let $\Delta \subseteq \Lambda$ be the $d$-dimensional sublattice realizing this supremum. Let $\B_1^d = \B_1^N \cap \Span_\R(\Delta)$; then this is a $d$-dimensional convex symmetric set of volume $\asymp 1$, and so by Theorem \ref{thm:VDC} we have
    \begin{align*}
        |\Delta \cap \B_1^d| \gtrsim \Covol(\Delta)^{-1},
    \end{align*}
    as desired.
\end{proof}

Most important for us is the following immediate corollary.

\begin{cor}\label{cor:height=count}
    We have the estimate
    \begin{equation*}
        \alpha(\Lambda) \asymp |\Lambda \cap \B_1^N|,
    \end{equation*}
    uniformly in $\Lambda \in \lattice$.
\end{cor}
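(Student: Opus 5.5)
The statement is Corollary~\ref{cor:height=count}, and I would prove it by combining Propositions~\ref{prop:counttocusp} and~\ref{prop:cusptocount}. The only care needed is with the parameter ranges: each proposition only gives the bound once $\alpha(\Lambda)$ or $|\Lambda \cap \B_1^N|$ is large. So I would first handle the regime where both quantities are $O(1)$, and then apply the two propositions in the complementary regime.

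\emph{Lower bounds in the bounded regime.} The origin lies in $\Lambda \cap \B_1^N$, so $|\Lambda \cap \B_1^N| \ge 1$. For $\alpha$, take $V = \R^N$, i.e.\ formally $p = N$; since $\Lambda$ is unimodular this gives covolume $1$. If one insists that $1 \le p \le N-1$, use Minkowski's second theorem (Theorem~\ref{thm:minksecond}) with $K = \B_1^N$. It gives $\prod_i \lambda_i(\Lambda) \asymp 1$, so $\lambda_1(\Lambda) \lesssim 1$, and hence $\alpha_1(\Lambda) \gtrsim 1$. Either way, $\alpha(\Lambda) \gtrsim 1$ and $|\Lambda \cap \B_1^N| \ge 1$ hold uniformly.

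\emph{Upper bounds.} Set $T = |\Lambda \cap \B_1^N| \ge 1$. Proposition~\ref{prop:counttocusp} says $\Lambda \in \lattice^{\thin, c/T}$ for some constant $c$, which means $\alpha(\Lambda) > T/c$. This gives $|\Lambda \cap \B_1^N| \lesssim \alpha(\Lambda)$. Its proof bounds $\Covol(\Delta) \lesssim T^{-1}$ for the sublattice $\Delta$ spanned by the short vectors. That bound is valid for every $T \ge 1$, provided $\Delta$ is a proper sublattice or we allow $p = N$; when $T = O(1)$ the bound follows from the lower bound on $\alpha$ anyway. For the reverse direction, given $\Lambda$, choose $\epsilon$ slightly larger than $\alpha(\Lambda)^{-1}$, say $\epsilon = 2\alpha(\Lambda)^{-1}$. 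Then $\Lambda \in \lattice^{\thin,\epsilon}$, and Proposition~\ref{prop:cusptocount} gives $|\Lambda \cap \B_1^N| \gtrsim \alpha(\Lambda)$. This uses only Theorem~\ref{thm:VDC} on the sublattice realizing $\alpha$, so it holds for every value of $\alpha(\Lambda)$.

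Putting the two directions together yields $\alpha(\Lambda) \asymp |\Lambda \cap \B_1^N|$ uniformly in $\Lambda$, with implied constants depending only on $N$. The one step that needs attention, though it is not a real obstacle, is making sure the propositions apply in the regime where $\alpha(\Lambda)$ or the count is $O(1)$; the uniform lower bound $\alpha \gtrsim 1$ from the first step takes care of this.
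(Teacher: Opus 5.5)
Your proof is correct and follows the paper's route: the paper presents the corollary as an immediate consequence of Propositions~\ref{prop:counttocusp} and~\ref{prop:cusptocount}, with no separate argument. Your extra care in the bounded regime fills in a detail the paper leaves implicit. That is the uniform bound $\alpha(\Lambda)\gtrsim 1$ from Minkowski's theorem, which also covers the case where the short vectors span a full-rank sublattice and so force $|\Lambda\cap\B_1^N| = O(1)$.
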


As an aside, we obtain the following integrability condition for Siegel transforms of compactly supported functions on $\R^N$.

\begin{cor}\label{cor:lpestimate}
    If $f \in C_c(\R^N)$, then the non-primitive Siegel transform $\hat{f}^{\SV'} \in L^p(\lattice)$ if and only if $1 \le p < N$.
\end{cor}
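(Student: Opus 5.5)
The plan is to compare $|\hat{f}^{\SV'}|$ with the height function $\alpha$ from both sides, and then integrate using the cusp volume estimate of Theorem \ref{thm:cuspvolume}. Here $\hat{f}^{\SV'}(\Lambda) = \sum_{\vbf \in \Lambda \setminus \{0\}} f(\vbf)$ denotes the non-primitive Siegel transform. For the lower bound we assume $f \not\equiv 0$; otherwise the statement is vacuous.

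\emph{Upper bound ($p<N$ implies $L^p$).} Suppose $\operatorname{supp} f \subseteq \B_r^N$. Then $|\hat{f}^{\SV'}(\Lambda)| \le \norm{f}_\infty |\Lambda \cap \B_r^N|$. Cover $\B_r^N$ by $O_r(1)$ balls of radius $1/2$. If $\vbf_0 \in \Lambda$ lies in such a ball $\B_{1/2}(x)$, then $\Lambda \cap \B_{1/2}(x) \subseteq \vbf_0 + (\Lambda \cap \B_1^N)$. Hence
\begin{equation*}
    |\Lambda \cap \B_r^N| \lesssim_r |\Lambda \cap \B_1^N| \asymp \alpha(\Lambda)
\end{equation*}
by Corollary \ref{cor:height=count}, so $|\hat{f}^{\SV'}| \lesssim_f \alpha$. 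Decompose $\lattice$ into the compact set $\{\alpha \le 1\}$ and the shells $\{2^j < \alpha \le 2^{j+1}\}$ for $j \ge 0$. Theorem \ref{thm:cuspvolume} gives $\mu(\alpha > 2^j) \lesssim 2^{-jN}$, so
\begin{equation*}
    \int_{\lattice} \alpha^p \dmu \lesssim 1 + \sum_{j \ge 0} 2^{(j+1)p}2^{-jN},
\end{equation*}
which is finite for $p < N$. Membership in $L^1$ is also immediate from Theorem \ref{thm:Siegelsformula} applied to $|f|$.

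\emph{Lower bound ($p \ge N$ excludes $L^p$).} Pick $x_0 \ne 0$ with $f(x_0) \ne 0$; by continuity this is possible. Pick $\rho>0$ so small that $|f - f(x_0)| < |f(x_0)|/2$ on $\B_\rho(x_0)$ and $0 \notin \B_{\rho}(x_0)$. Consider lattices with a very short primitive vector $\vbf$, say $|\vbf| \in (\epsilon/C, \epsilon]$, whose direction lies in the cap of directions within angle $\asymp \rho/|x_0|$ of $x_0/|x_0|$. Then the multiples $m\vbf$ with $m \ge 1$ meet $\B_\rho(x_0)$ in $\gtrsim \rho/\epsilon$ points. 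Every term of the sum indexed by these points has real part in the direction of $f(x_0)$ at least $|f(x_0)|/2$. The remaining lattice points in the support are where cancellation could occur, and controlling them is the main obstacle. I would handle it by restricting to lattices whose short vector is the unique very short direction: for $\Lambda$ with $\alpha_1 \gg$ every other $\alpha_p$ contribution, the points of $\Lambda \cap \B_r$ essentially all lie on the line $\R\vbf$, and then all contributing terms come from the multiples. Alternatively, for nonnegative $f$ no cancellation arises at all, and that case already captures the phenomenon. Either way $|\hat{f}^{\SV'}(\Lambda)| \gtrsim \epsilon^{-1}$ on this set.

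It remains to bound the measure of that set from below. The lower bound in Theorem \ref{thm:cuspvolume}, which comes from $p=1$ in its proof, shows that the lattices with $\alpha_1 \in (C^{j}, C^{j+1}]$ have measure $\gtrsim C^{-jN}$ for a large fixed $C$. In the Siegel-set parametrization these lattices can be taken generic, with the other $a_i$ bounded, which gives the uniqueness of the short direction used above. Since $\mu$ is $K$-invariant, rotating shows that a proportion $\gtrsim_\rho 1$ of them have short vector in the required cap. Hence
\begin{equation*}
    \norm{\hat{f}^{\SV'}}_p^p \gtrsim \sum_j C^{jp}C^{-jN} = \infty \quad \text{for } p \ge N.
\end{equation*}
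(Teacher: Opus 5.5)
Your upper bound is correct. It is the paper's argument in dyadic rather than layer-cake form: domination by $\norm{f}_\infty|\Lambda\cap\B_r^N|$, then Corollary \ref{cor:height=count}, then Theorem \ref{thm:cuspvolume}.

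\textbf{The gap.} The lower bound breaks at ``all contributing terms come from the multiples \dots\ Either way $|\hat{f}^{\SV'}(\Lambda)|\gtrsim\epsilon^{-1}$.'' On your set the transform is $\sum_{m\neq0}f(m\vbf)$ over \emph{all} nonzero multiples, not only the $\gtrsim\rho/\epsilon$ of them lying in $\B_\rho(x_0)$. The terms with $m\le-1$ sit near $-x_0$, and the other positive multiples run along the rest of the ray inside the support. None of these has its sign controlled by $f(x_0)$. With $u=\vbf/|\vbf|$ the sum is a Riemann sum,
$\sum_{m\neq0}f(m\vbf)=|\vbf|^{-1}\int_{\R}f(su)\,ds+o(|\vbf|^{-1})$,
and this integral over the full line through the origin can vanish. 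Isolating a single short direction does not help, because the cancellation happens inside $\Z\vbf$ itself.

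In fact no argument can close this gap. If $f$ is odd (e.g.\ $f(x)=x_1\chi(|x|)$ with $\chi$ a bump), then $\Lambda=-\Lambda$ forces $\hat{f}^{\SV'}\equiv0$, which lies in every $L^p$. So the ``only if'' direction is false for general $f\in C_c(\R^N)$, and a hypothesis must be added. The case $f\equiv0$ is also a counterexample, not a vacuous case as you say.

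\textbf{What your construction does prove.} Replace the condition $f(x_0)\neq0$ by the right one: some line $\R u_0$ has $\int_{\R}f(su_0)\,ds\neq0$. By uniform continuity the line integral stays bounded away from $0$ for lines near $\R u_0$. Now take your set of lattices: a unique short primitive pair $\pm\vbf$ with $|\vbf|\asymp\epsilon$, direction in that cap, and no other lattice points in the support. On this set $|\hat{f}^{\SV'}|\gtrsim\epsilon^{-1}$, and your measure count then gives divergence for $p\ge N$.

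This covers every $f\ge0$ with $f\not\equiv0$, i.e.\ your ``alternative'' case. That is slightly more than the paper proves. Its reduction to $f=\ind_{\B_r}$ is valid for the upper bound by domination. For the lower bound it only reaches $f$ dominating some $c\,\ind_{\B_\rho}$ centered at $0$, so nonnegative $f$ with $f(0)=0$ is not literally treated there.

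\textbf{A small fix in your measure step.} With $a_1\asymp\epsilon$ and $\prod_i a_i=1$, the other $a_i$ cannot be bounded. Take instead $a_2\asymp\cdots\asymp a_N\asymp\epsilon^{-1/(N-1)}$, i.e.\ bounded ratios $a_i/a_{i+1}$ for $i\ge2$. This is what guarantees that no lattice points off $\R\vbf$ lie in the support, and the region still has measure $\asymp\epsilon^N$.
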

\begin{proof}
    It suffices to show the integrability for $f = \ind_{\B_r}$, so that $\hat{f}^{\SV'}(\Lambda) = |\Lambda \cap \B_r| - 1$. By a layer-cake decomposition, we obtain
    \begin{align*}
        \int_{\lattice} |\Lambda \cap \B_r^N|^p\dmu(\Lambda)  &= p\int_0^\infty t^{p-1} \mu(\{\Lambda \in \lattice : |\Lambda \cap \B_r^N| \ge t\}) \dt \\
        &\asymp_{p} 1 + \int_1^\infty t^{p-1}\mu(\lattice^{\thin,O_r(t^{-1})}) \dt \\
        &\asymp_r \int_1^\infty t^{p-1-N}\dt,
    \end{align*}
    which is finite if and only if $p < N$.
\end{proof}

\begin{rmk}\label{rmk:integrability}
    The same proof shows that $\hat{f}^{\SV'} \in L^\Phi(\lattice)$ if and only if $\int_1^\infty \Phi'(t)\frac{\dt}{t^N} < \infty$. In the case where $p \in \{1,\dots,N-1\}$ is an integer, an explicit formula for these moments is established by Rogers in \cite{Rog} (with finiteness established in general in \cite{Sch}). Of course, $f$ only needs to be bounded and compactly supported.
\end{rmk}

%% file: Sections/4-pointsnearmanifolds.tex
\section{Points near manifolds}\label{section:ptsnearmflds}

The goal of this section is to prove Theorem \ref{thm:countingnearmanifolds}.

Assume that $\Sigma \subseteq \B_1^n$ is an $m$-dimensional compact $C^k$ submanifold. Using the implicit function theorem, we may reduce to the case where $\Sigma$ is the graph of a function
\begin{equation*}
    \Sigma_\psi = \left\{ (\xi,\psi(\xi)) : \xi \in [0,1]^m \right\},
\end{equation*}
for some $\psi \in C^k([0,1]^m;\R^{n-m})$ with $\norm{\psi}_{C^k} \lesssim 1$.\footnote{The choice of norm on $C^k$ is unimportant; for concreteness, take $\norm{f}_{C^k} = \max_{0 \le |\alpha| \le k} \norm{D^\alpha f}_{L^\infty}$.}

Let us set
\begin{equation*}
    \beta = \frac{1}{m+k(n-m)} \quad \mathrm{and} \quad q = \frac{m}{m+k(n-m)}.
\end{equation*}
Given $R \ge 1$, we subdivide $[0,1]^m$ into $\sim R^{q}$ cubes $\{I_\theta\}_{\theta \in \Theta}$ of side lengths $\sim R^{-\beta}$. Fix some $0 < \epsilon < 1$. For each $\theta \in \Theta$, we define the associated \textit{thickening}
\begin{equation*}
    \Sigma_\psi(\epsilon,\theta) = \left\{ (\xi,\psi(\xi) + y) : \xi \in I_\theta,\ y \in [0,\epsilon R^{-k\beta})^{n-m} \right\}.
\end{equation*}
Up to shrinking $\Theta$ by a multiplicative factor of at most $3^m$, we may assume that the sets $\Sigma_\psi(\epsilon,\theta)$ are all $R^{-\beta}$-separated.

\subsection{Counting points in sectors}

Fix some $N \ge \max\{n,3\}$, and set $C_0 = (\frac{3^m\zeta(N)}{\omega_{N-n}})^{1/q}$, where $\omega_l = \Vol(\B_1^l)$ is the volume of the $l$-dimensional unit ball. Let $0 < \rho_0 \le 1$ be an additional parameter.

\begin{thm}\label{thm:countingpointsnge3}
    For all $0 < \epsilon < \frac{1}{10}$ and all $R \ge C_0\rho_0^{-(N-n)/q}\epsilon^{-\frac{n-m}{q}}$, we have
    \begin{equation*}
        \P\left( \left| \left\{ \theta \in \Theta : R^{-1/N}\Lambda \cap \left( \Sigma_\psi(\epsilon,\theta) \times \B_{\rho_0}^{N-n} \right) \ne \emptyset \right\} \right| \ge \frac{\omega_{N-n}\rho_0^{N-n}}{6 \cdot 3^m\zeta(N)}\epsilon^{n-m} R^{q} \right) \ge \frac{1}{100}.
    \end{equation*}
\end{thm}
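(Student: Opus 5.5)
Write $B_\theta = R^{1/N}\bigl(\Sigma_\psi(\epsilon,\theta)\times \B_{\rho_0}^{N-n}\bigr) \subseteq \R^N$ for the rescaled sectors, so that the event in question is $\Lambda\cap B_\theta \ne \emptyset$. Let $Y = \sum_{\theta\in\Theta} \hat{\ind}_{B_\theta}^{\SV}(\Lambda)$ count primitive points in the union; then the number of hit sectors is at least the number of $\theta$ with $\hat{\ind}_{B_\theta}^{\SV}\ge 1$. The plan is a second-moment argument that avoids controlling multiple hits in a single sector. To do this, we pass to the disjoint union $B = \biguplus_\theta B_\theta$ and count instead the random variable
\begin{equation*}
Z = \sum_\theta \ind\{\hat{\ind}_{B_\theta}^{\SV}(\Lambda)\ge 1\}.
\end{equation*}

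First, the volumes. Each $\Sigma_\psi(\epsilon,\theta)$ has $n$-volume $\asymp R^{-m\beta}\epsilon^{n-m}R^{-k(n-m)\beta} = \epsilon^{n-m}R^{-1}$, and more precisely it equals $|I_\theta|\,\epsilon^{n-m}R^{-k(n-m)\beta}$, because the thickening is a vertical shear of a box. Scaling by $R^{1/N}$ in all $N$ coordinates multiplies the volume by $R$. Hence $\Vol(B_\theta) = \omega_{N-n}\rho_0^{N-n}\epsilon^{n-m}R|I_\theta|R^{-k(n-m)\beta}$. Summing over the $\gtrsim 3^{-m}R^q$ surviving cubes, and using $|I_\theta| = R^{-m\beta}$, gives
\begin{equation*}
V := \Vol(B) \ge 3^{-m}\omega_{N-n}\rho_0^{N-n}\epsilon^{n-m}R^q.
\end{equation*}
The hypothesis $R\ge C_0\rho_0^{-(N-n)/q}\epsilon^{-(n-m)/q}$ is exactly $V/\zeta(N)\ge 1$, so $\E[Y] = V/\zeta(N)\ge 1$ by Siegel's formula (Theorem~\ref{thm:Siegelsformula}). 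The sets $B_\theta$ are disjoint and each satisfies $B_\theta\cap -B_\theta=\emptyset$, since every coordinate $\xi$ is positive. Rogers' formula, via Corollary~\ref{cor:variancebound}, then gives $\Var(Y)\le 2\E[Y]$, and Chebyshev yields $Y\ge \tfrac12\E[Y]$ with probability at least $1-8/\E[Y]$. This is useful once $\E[Y]$ is a large constant, which the constants in $C_0$ should be tuned to ensure; if they do not, I would enlarge $C_0$ by an absolute factor.

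The main obstacle is passing from $Y$, the number of points, to $Z$, the number of sectors hit, since a single sector could contain many lattice points. I would handle this with the Rogers second moment applied to pairs within the same sector:
\begin{equation*}
\E\Bigl[\sum_\theta \hat{\ind}_{B_\theta}^{\SV}\bigl(\hat{\ind}_{B_\theta}^{\SV}-1\bigr)\Bigr]
= \sum_\theta \Bigl(\zeta(N)^{-2}\Vol(B_\theta)^2 + \zeta(N)^{-1}\Vol(B_\theta\cap -B_\theta)\Bigr),
\end{equation*}
obtained by applying Theorem~\ref{thm:Rogersformula} to each $\ind_{B_\theta}$. The last term vanishes, and the first is $\zeta(N)^{-2}\sum_\theta\Vol(B_\theta)^2 \lesssim \E[Y]\cdot \max_\theta\Vol(B_\theta)$. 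Here $\Vol(B_\theta)\asymp \E[Y]/|\Theta|$, which is tiny when $|\Theta|$ is large. We can also simply note that $\Vol(B_\theta)\lesssim \rho_0^{N-n}\epsilon^{n-m}\le 10^{-(n-m)}$, which is small. Either way, the excess $Y - Z \le \sum_\theta \hat{\ind}_{B_\theta}^{\SV}(\hat{\ind}_{B_\theta}^{\SV}-1)$ has expectation at most $c\,\E[Y]$ with $c$ small, using $\epsilon<\tfrac1{10}$. Markov's inequality then bounds $Y-Z\le \tfrac16\E[Y]$ off an event of probability at most $6c$.

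Combining the two estimates, with probability at least $1 - 8/\E[Y] - 6c$ we get
\begin{equation*}
Z\ge \tfrac12\E[Y]-\tfrac16\E[Y] = \tfrac13 \E[Y] \ge \tfrac{\omega_{N-n}\rho_0^{N-n}}{3\cdot 3^m\zeta(N)}\epsilon^{n-m}R^q.
\end{equation*}
This leaves a factor of $2$ of slack relative to the claimed $\tfrac{1}{6\cdot 3^m\zeta(N)}$, which can absorb the constants lost in the $3^m$ shrinking and the Chebyshev step. Securing probability at least $\tfrac1{100}$ then only requires $\E[Y]$ to exceed a modest absolute constant. If the stated $C_0$ only gives $\E[Y]\ge 1$, I would instead use Paley--Zygmund: $\P(Y\ge \tfrac12\E Y)\ge \tfrac14\E[Y]^2/\E[Y^2]\ge \tfrac14\cdot\tfrac{1}{1+2/\E Y}\ge \tfrac1{12}$. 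This handles the regime $\E[Y]\ge 1$ directly, and together with the pair-count bound it gives probability at least $\tfrac1{100}$.
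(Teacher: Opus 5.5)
\textbf{There is a genuine gap in the step where you combine the two estimates.}

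You control the excess $Y-Z$ only in expectation, $\E[Y-Z]\le c\,\E[Y]$ with $c=\zeta(N)^{-1}\Vol(B_\theta)$. You then need Markov's failure probability to fit inside the success probability of the concentration step. When $\E[Y]$ is only $\ge 1$, that success probability is the $\frac{1}{12}$ from Paley--Zygmund; this is all the stated $C_0$ guarantees, and $C_0$ is fixed by the statement, so you may not enlarge it.

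But $c$ is not small.

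\begin{itemize}
\item Your own computation gives $\Vol(B_\theta)=\omega_{N-n}\rho_0^{N-n}\epsilon^{n-m}$ exactly, independent of $R$. It is not ``tiny when $|\Theta|$ is large'': $\E[Y]/|\Theta|$ is a fixed constant.
\item Your fallback bound $\rho_0^{N-n}\epsilon^{n-m}\le 10^{-(n-m)}$ drops the factor $\omega_{N-n}$, which reaches about $5.26$ at $N-n=5$.
\end{itemize}

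For instance, with $\rho_0=1$, $N-n=5$, $n-m=1$ and $\epsilon$ near $\frac{1}{10}$, one gets $c\approx\frac12$, so your bound $6c\approx 3$ is vacuous.

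Rebalancing the split does not rescue this. At $\E[Y]=1$ the second moment gives at best $\P(Y\ge a\E[Y])\ge (1-a)^2/3<\frac14$ for $a>\frac16$. Meanwhile Markov only gives $\P(Y-Z\ge (a-\frac16)\E[Y])\le c/(a-\frac16)$, and this bound is at least $\frac{6c}{5}\approx 0.6$. So a union bound of ``$Y$ is large'' and ``$Y-Z$ is small'' cannot close with these inputs. The excess is a fixed positive fraction of $\E[Y]$, not a negligible one.

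\textbf{The fix is to apply Paley--Zygmund to the sector count $Z$ itself.}

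\begin{itemize}
\item Second moment: since $Z\le Y$ pointwise, $\E[Z^2]\le \E[Y^2]\le \E[Y]^2+2\E[Y]$.
\item First moment: your pair-count computation, which correctly uses that $B_\theta\cap -B_\theta$ is null, is exactly what is needed. Writing $X_\theta=\hat{\ind}_{B_\theta}^{\SV}$, it gives $\E[Z]\ge \E[Y]-\sum_\theta \E[X_\theta(X_\theta-1)]=(1-c)\,\E[Y]$.
\item Conclusion: $\P(Z\ge \frac12\E[Z])\ge \frac{(1-c)^2}{12}\ge\frac{1}{100}$, since $c<0.53$. Moreover $\frac12\E[Z]\ge\frac{1-c}{2}\E[Y]\ge\frac16\E[Y]$, which dominates the required threshold.
\end{itemize}

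This is the paper's argument: it works with the sector count, and bounds its second moment by that of the total count. The one difference is the first-moment bound. The paper lower-bounds $\E[Z]$ sector by sector via $\P(X_\theta>0)\ge \E[X_\theta]^2/\E[X_\theta^2]\ge \frac{c}{2+c}$, without exploiting the vanishing antipodal term. Your pair count instead gives the slightly better factor $1-c$ in place of $\frac{1}{2+c}$ in the relevant range.
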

\begin{proof}
    To start, let us compute
    \begin{equation*}
        \Vol\left( \Sigma_\psi(\epsilon,\theta) \times \B_{\rho_0}^{N-n}\right) = R^{-q} \epsilon^{n-m}R^{-(n-m)k\beta}\omega_{N-n}\rho_0^{N-n} = \omega_{N-n}\rho_0^{N-n}\epsilon^{n-m} R^{-1}.
    \end{equation*}
    Thus, the dilates $T_\theta = R^{1/N}\left( \Sigma_\psi(\epsilon,\theta) \times \B_{\rho_0}^{N-n}\right)$ are each of volume $\omega_{N-n}\rho_0^{N-n}\epsilon^{n-m}$. Let us define the functions
    \begin{equation*}
        f_\theta = \ind_{T_\theta}, \quad f = \sum_{\theta \in \Theta} f_\theta, \quad X_\theta = \hat{f}_{\theta}^{\SV}, \quad X = \sum_{\theta \in \Theta} X_\theta.
    \end{equation*}
    Set $E = \E[X_\theta]$. By Siegel's mean value theorem, this is well-defined independently of $\theta$, and in fact
    \begin{equation*}
        E = \frac{\omega_{N-n}\rho_0^{N-n}}{\zeta(N)}\epsilon^{n-m}.
    \end{equation*}
    Rogers' second moment formula implies
    \begin{equation*}
        \E[X_\theta^2] \le E^2 + 2E.
    \end{equation*}
    Thus, by the second moment method we have
    \begin{equation*}
        \P(X_\theta > 0) \ge \frac{E^2}{\E[X_\theta^2]} \ge \frac{E}{2+E}.
    \end{equation*}
    Now, set $Y_\theta = \ind_{X_\theta > 0}$ and $Y = \sum_\theta Y_\theta$. Then
    \begin{equation*}
        \E[Y] \ge \frac{E}{2+E} |\Theta|.
    \end{equation*}
    Paley-Zygmund implies
    \begin{align*}
        \P\left(Y \ge \frac{1}{2}\E[Y] \right) \ge \frac{1}{4}\frac{\E[Y]^2}{\E[Y^2]} \ge \frac{1}{4}\frac{\E[Y]^2}{\E[X^2]} &\ge \frac{1}{4}\frac{|\Theta|^2\frac{E^2}{(2+E)^2}}{\E[X]^2 + \Var(X)} \\
        &= \frac{1}{4(2+E)^2}\frac{|\Theta|^2E^2}{|\Theta|^2E^2 + \Var(X)}.
    \end{align*}
    By Corollary \ref{cor:variancebound}, we have $\Var(X) \le 2|\Theta|E$, and so
    \begin{equation*}
        \frac{|\Theta|^2E^2}{|\Theta|^2E^2 + \Var(X)} \ge \frac{1}{1 + \frac{2}{|\Theta|E}}.
    \end{equation*}
    Therefore, so long as $|\Theta|E \ge 1$, we have
    \begin{equation*}
        \P\left( Y \ge \frac{1}{2}\E[Y] \right) \ge \frac{1}{12(2+E)^2}.
    \end{equation*}
    The bound $|\Theta|E \ge 1$ holds so long as $R \ge C_0\rho_0^{-(N-n)/q}\epsilon^{-(n-m)/q}$.
    
    Now, a straightforward computation shows $E = \frac{\omega_{N-n}\rho_0^{N-n}}{\zeta(N)}\epsilon^{n-m} \le 0.6$ uniformly in all parameters. Thus we have
    \begin{equation*}
        \P\left( Y \ge \frac{1}{2}\E[Y] \right) \ge \frac{1}{100}.
    \end{equation*}
    Since $\E[Y] \ge \frac{E}{2+E}|\Theta|$, we have
    \begin{equation*}
        \P\left( Y \ge \frac{1}{2}\E[Y] \right) \le \P\left( Y \ge \frac{E}{6}|\Theta| \right) \le \P\left( Y \ge \frac{\omega_{N-n}\rho_0^{N-n}}{6 \cdot 3^m\zeta(N)}\epsilon^{n-m} R^{q} \right).
    \end{equation*}
    Finally, note that
    \begin{equation*}
        Y = \left| \left\{ \theta \in \Theta : \Lambda_{\prim} \cap T_\theta \ne \emptyset \right\} \right|,
    \end{equation*}
    which completes the proof.
\end{proof}

\subsection{Proof of Theorem \ref{thm:countingnearmanifolds}}

\begin{proof}[Proof of Theorem \ref{thm:countingnearmanifolds}]
    This follows from Theorem \ref{thm:countingpointsnge3}, after noting that the set of lattices which do not map injectively under $\R^n \times \R^{N-n} \to \R^n$ has positive codimension, and in particular has measure zero. Observe that the separation condition on the set $\Theta$ guarantees that these projected points remain $R^{-\beta}$-separated even under the projection.
\end{proof}

\begin{rmk}\label{rmk:highprob}
    The estimate $\P \ge \frac{1}{100}$ is far from optimal, and the above argument in fact gives a lower bound of the shape $\P \ge \frac{1}{4+o(1)}$; here, the constant $\frac{1}{4} = \frac{1}{2^2}$ ``comes from'' the fact $\mathrm{Aut}(\Lambda) = \{\pm 1\}$ for a $\mu$-random $\Lambda$. In fact, with slightly more effort one can show $\P = 1-o(1)$.

    The strategy for proving Theorem~\ref{thm:countingnearmanifolds} is similar to the one employed by Athreya-Margulis in \cite{AthMar}. For us, the explicit formulation of Rogers' second moment formula is used both to control individual events, and as an independence condition on the events $|\Lambda \cap T_\theta| \ne 0$.
\end{rmk}

\begin{rmk}
    The question of counting lattice points near manifolds is of central importance in diophantine approximation, and has been studied from multiple perspectives. We refer the reader to Kleinbock-Margulis (\cite{KleMar}), Bernik-Kleinbock-Margulis (\cite{BerKleMar}), Iosevich-Sawyer-Seeger (\cite{IosSawSee}), Iosevich-Taylor (\cite{IosTay}), Beresnevich (\cite{Ber}), Huang (\cite{Hua}), Schindler-Srivastava-Technau (\cite{SchSriTec}), Srivastava (\cite{Sri}), and Smith (\cite{Smi}) for an (incomplete) list of references including a mix of number-theoretic, harmonic-analytic, and dynamical approaches.
\end{rmk}

%% file: Sections/5-maximalfunctions.tex
\section{Maximal function estimates}\label{section:maximalfunctionestimates}

In this section, we study logarithm laws for escape rates of lattices which are allowed to rotate in an adversarial way along positive-codimension subspaces, with the goal of proving Theorem \ref{thm:maximalestimates}, as well as Theorem \ref{thm:cuspestimatefinitetimeprobupper}. 

To start, we identify $\R^n$ with its inclusion into $\R^N$ in the first $n$ coordinates:
\begin{equation*}
    \R^n = \{(x_1,\dots,x_n,0,\dots,0) : x_i \in \R \}.
\end{equation*}
Let $H \le G$ denote the subgroup of $K = \SO(N)$ which stabilizes the subspace $(\R^n)^\perp$. This is an $\frac{1}{2}n(n-1)$-dimensional compact subgroup, and in matrix coordinates we have
\begin{equation*}
    H = \begin{bmatrix}
        \SO(n) & \\
        & \Id_{N-n}
    \end{bmatrix}.
\end{equation*}
Let $\afrak = \{\zbf \in \R^N : \sum_i z_i = 0\}$. We identify $\afrak$ with a Cartan subalgebra of $\slinear_N(\R) = \Lie(G)$, and let $\afrak_+ = \{\zbf \in \afrak : z_1 \ge \cdots \ge z_N\}$ be the associated Weyl chamber.

Suppose now that $\zbf \in \afrak_+$, and consider the one-parameter subgroup $g^t = \exp(-t\zbf)$. We will be interested in the maximal operator $\Max_\zbf^t \colon \lattice \to [0,\infty)$ defined by
\begin{equation*}
    \Max_\zbf^t(\Lambda) = \sup_{h \in H} \alpha(g^th\Lambda).
\end{equation*}
We moreover define constants $\delta_\zbf,\rho_\zbf > 0$ via the formulas
\begin{equation*}
    \delta_\zbf = \sum_{1 \le i < j \le n} (z_i-z_j) \quad \mathrm{and} \quad \rho_\zbf = nz_1 - \sum_{1 \le i \le n} z_i.
\end{equation*}

\begin{rmk}\label{rmk:matchingbounds}
    The function $\Max_\zbf^t$, as well as the constants $\delta_\zbf,\rho_\zbf$, implicitly depend on $n$. The identity $\delta_\zbf = \rho_\zbf$ holds if and only if $z_i = z_j$ whenever $i,j \ge 2$; this is the case we consider for proving Proposition \ref{prop:oldmainproposition}.
    
    Note that $\rho_\zbf > 0$ except in trivial cases. This shows that we cannot obtain $\lesssim 1$ for Proposition \ref{prop:oldmainproposition}(2)(ii) using this method. It would be interesting to find the correct almost sure asymptotic for $\limsup_t \frac{1}{t} \log \Max_\zbf^t$, for a general $\zbf$ (this limit exists a.s.; see Proposition \ref{prop:asloglaw}).

    Both $\delta_\zbf$ and $\rho_\zbf$ can be viewed as the topological entropy of the $g^t$-flow when restricted to certain invariant submanifolds of $\lattice$, as in \cite[Lemma 6.2]{EinKat}. See also e.g., \cite{Mor1}, \cite{KadKleLinMar}, \cite{EinLinMicVen} for the relationship between escape of mass and entropy.
\end{rmk}

The proof of Theorem \ref{thm:maximalestimates} follows in two parts; we first establish the upper bound by constructing an appropriate net of ellipsoids and then applying a Borel-Cantelli argument, and then prove the corresponding lower bound deterministically. In view of the results of Corollary \ref{cor:height=count}, we may equivalently study the quantity
\begin{equation}\label{eq:maxtoheight}
    \Max_\zbf^t(\Lambda) \asymp \sup_{h \in H} |g^th\Lambda \cap \B_1^N| = \sup_{h \in H} |\Lambda \cap hg^{-t}\B_1|.
\end{equation}

\begin{rmk}
    The connection between Theorem \ref{thm:cuspestimatefinitetimeprobupper} and the Mizohata-Takeuchi conjecture lies in the observation that $g^{-t}\B_1$ is an ellipsoid in $\R^N$ of dimensions $e^{z_1t} \times \cdots \times e^{z_Nt}$, and the $H$-action simply rotates this ellipse inside the subspace $\R^n$; thus, for an appropriate choice of $\zbf$, the operator $\Max_\zbf^t$ is essentially the X-ray transform of a certain weight on $\R^n$.
\end{rmk}

\subsection{Constructing a good net}

Temporarily, we work in $\R^n$. Let $\Fc$ denote the family of all ellipsoids in $\R^n$ of dimensions $e^{z_1t} \times \cdots \times e^{z_nt}$; then we may write $\Fc = \SO(n)F_0$, where $F_0$ is the standard axis-aligned ellipsoid of these dimensions. We say a finite subset $\{F_\theta\}_{\theta \in \Theta} \subseteq \Fc$ is a \textit{net} if, for all $F \in \Fc$, there exists some $F_\theta$ with $F \subseteq CF_\theta$ for some $C = O(1)$. Our goal is to establish an upper bound for $|\Theta|$.

\begin{lemma}\label{lemma:netsize}
    For all $t \ge 1$, we may choose $|\Theta| \lesssim e^{\delta_\zbf t}$.
\end{lemma}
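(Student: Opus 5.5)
We need a net of $\SO(n)$-rotations of the ellipsoid $F_0$ with semi-axes $e^{z_1t}\ge\cdots\ge e^{z_nt}$, with size $\lesssim e^{\delta_\zbf t}$. The plan is to reduce to covering $\SO(n)$ by suitably anisotropic "boxes" and count them by a volume argument.

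First we determine when two rotations give comparable ellipsoids. Write $F_0 = D\B_1^n$ with $D=\mathrm{diag}(e^{z_1t},\dots,e^{z_nt})$. For $h,h'\in\SO(n)$, the containment $hF_0\subseteq CF_0'$ with $F_0' = h'F_0$ is equivalent to $\|D^{-1}h'^{-1}hD\|_{op}\le C$. Set $k=h'^{-1}h$. The entries of $D^{-1}kD$ are $k_{ij}e^{(z_j-z_i)t}$. So it suffices that $|k_{ij}|\lesssim e^{-(z_j-z_i)t}$ for $i>j$, the entries below the diagonal. The entries with $i\le j$ already have $e^{(z_j-z_i)t}\le 1$ and $|k_{ij}|\le1$, so they impose no constraint.

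Next we parametrize the neighbourhood $\{k\in\SO(n): |k_{ij}|\le c\,e^{-(z_j-z_i)t}\ \forall i>j\}$ near the identity. Writing $k=\exp(X)$ with $X$ skew-symmetric, take the coordinates $X_{ij}$ for $i>j$. The set contains the image of the box $\{|X_{ij}|\le c' e^{-(z_j-z_i)t}\}$ for small $c'$.

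To check this, note that the second-order terms of the exponential give $(X^2)_{ij}=\sum_l X_{il}X_{lj}$. Each product is bounded by $e^{-|z_i-z_l|t}e^{-|z_l-z_j|t}\le e^{-|z_i-z_j|t}$ by the triangle inequality, and higher terms behave similarly. This triangle-inequality bound on the higher-order terms of the exponential is the step I expect to need the most care, though it should go through.

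This box has Haar volume $\asymp\prod_{i<j}e^{-(z_i-z_j)t}=e^{-\delta_\zbf t}$. When some $z_i=z_j$ the sidelength is $O(1)$, which is harmless.

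Finally we count via a maximal packing. Let $V$ be the box neighbourhood (with a smaller constant) and choose a maximal family $\{h_\theta\}$ such that the translates $h_\theta V$ are disjoint. Then $|\Theta|\le \mathrm{vol}(\SO(n))/\mathrm{vol}(V)\lesssim e^{\delta_\zbf t}$.

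Maximality gives, for any $h$, some $\theta$ with $hV\cap h_\theta V\neq\emptyset$, so $h_\theta^{-1}h\in VV^{-1}$. We then need $VV^{-1}$ to lie in the set where $\|D^{-1}kD\|\le C$. This holds because the operator-norm condition is submultiplicative: $\|D^{-1}k_1k_2^{-1}D\|\le \|D^{-1}k_1D\|\,\|D^{-1}k_2^{-1}D\|$. Also $\|D^{-1}k_2^{-1}D\|=\|D k_2 D^{-1}\|$, and this is bounded because for $k_2\in V$ the above-diagonal entries of $k_2$ are symmetrically small (skew-symmetry up to higher order).

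Defining $F_\theta=h_\theta F_0$ then yields $F\subseteq CF_\theta$ for every $F\in\Fc$, which completes the net.
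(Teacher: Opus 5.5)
Your proposal is correct and follows essentially the same route as the paper. Both reduce the containment to boundedness of the conjugate $D^{-1}kD$, take the box in $\sortho_n(\R)$ with sides $e^{-|z_i-z_j|t}$ and volume $e^{-\delta_\zbf t}$, and cover $\SO(n)$ by its translates; your maximal-packing step and the symmetry $V=V^{-1}$ make rigorous what the paper leaves implicit. The step you flagged as delicate is immediate from $D^{-1}\exp(X)D=\exp(D^{-1}XD)$, which is exactly how the paper handles it: $D^{-1}XD$ has entries $X_{ij}e^{(z_j-z_i)t}$, all $O(c')$ on your box, so $\|D^{-1}\exp(X)D\|_{op}\le e^{O(c')}$.
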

\begin{proof}
    Let $\sortho_n(\R)$ be the Lie algebra of $\SO(n)$, and define $\gamma^t = \exp(-t(z_1,\dots,z_n))$. Then $F_0 = \gamma^{-t}\B_1^n$. Note that, for $\Omega \in \sortho_n(\R)$, we have
    \begin{equation*}
        \exp(\Omega)F_0 = \exp(\Omega)\gamma^{-t}\B_1 = \gamma^{-t}(\gamma^t\exp(\Omega)\gamma^{-t})\B_1 = \gamma^{-t}(\exp(\gamma^{t}\Omega \gamma^{-t}))\B_1.
    \end{equation*}
    Thus, a matrix $\Omega \in \sortho_n(\R)$ satisfies $\exp(\Omega)F_0 \subseteq CF_0$ if and only if $\gamma^t\Omega \gamma^{-t}$ has bounded entries. We have
    \begin{equation*}
        (\gamma^t\Omega\gamma^{-t})_{ij} = e^{(z_j-z_i)t}\Omega_{ij},
    \end{equation*}
    and so we must consider the set
    \begin{equation*}
        P = \left\{ \Omega \in \sortho_n(\R) : |\Omega_{ij}| \lesssim e^{(z_j - z_i)t} \right\}.
    \end{equation*}
    Since $\Omega$ is skew-symmetric and $\zbf$ is nonincreasing, we have
    \begin{equation*}
        \Vol(P) \asymp \prod_{1 \le i < j \le n} e^{(z_j-z_i)t} = e^{-\delta_\zbf t}.
    \end{equation*}
    Thus, we require $\lesssim e^{\delta_\zbf t}$ patches of this form to cover $\SO(n)$, so $|\Theta| \lesssim e^{\delta_\zbf t}$.
\end{proof}

\subsection{Establishing an upper bound}

The primary goal of this subsection is to establish the following probabilistic estimate.

\begin{thm}\label{thm:cuspestimatefinitetimeprobupper}
    For all $t \gg 1$, we have
    \begin{equation*}
        \P\left( \Max_\zbf^t(\Lambda) \ge Te^{\delta_\zbf t/N} \right) \lesssim T^{-N}.
    \end{equation*}
\end{thm}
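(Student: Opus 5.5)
We discretize the supremum over $H$ using the net from Lemma~\ref{lemma:netsize}, and then apply a union bound together with the cusp volume estimate. By \eqref{eq:maxtoheight}, $\Max_\zbf^t(\Lambda) \asymp \sup_{h\in H}|\Lambda \cap hg^{-t}\B_1^N|$. The set $g^{-t}\B_1^N$ is an ellipsoid whose first $n$ axes have lengths $e^{z_1t},\dots,e^{z_nt}$. It is contained in $F_0\times E'$, where $E'\subseteq\R^{N-n}$ is the fixed ellipsoid with axes $e^{z_{n+1}t},\dots,e^{z_Nt}$, and this product is in turn contained in $O(1)\cdot g^{-t}\B_1^N$. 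Since $H$ acts only on the first factor, each $h g^{-t}\B_1^N$ lies in $(hF_0)\times E'$. The net $\{F_\theta\}$ then gives some $\theta$ with $hF_0\subseteq CF_\theta$. Writing $F_\theta = h_\theta F_0$ with $h_\theta\in H$, we get $hg^{-t}\B_1^N \subseteq C' h_\theta g^{-t}\B_1^N$. Hence
\begin{equation*}
    \Max_\zbf^t(\Lambda) \lesssim \max_{\theta\in\Theta} |\Lambda \cap C'h_\theta g^{-t}\B_1^N| \lesssim \max_{\theta\in\Theta}|g^th_\theta^{-1}\Lambda \cap \B_{C'}^N|.
\end{equation*}
Covering $\B_{C'}^N$ by $O(1)$ unit balls and applying Corollary~\ref{cor:height=count} (the translate version follows because the count in a translated ball is at most the count in a centered ball of twice the radius, via differences), we obtain $\Max_\zbf^t(\Lambda)\lesssim \max_\theta \alpha(g^th_\theta^{-1}\Lambda)$.

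Next comes the union bound. Since $\mu$ is $G$-invariant, each $g^th_\theta^{-1}\Lambda$ is $\mu$-distributed. Theorem~\ref{thm:cuspvolume} then gives
\begin{equation*}
    \P\big(\alpha(g^th_\theta^{-1}\Lambda) \ge cTe^{\delta_\zbf t/N}\big) = \mu\big(\lattice^{\thin,(cTe^{\delta_\zbf t/N})^{-1}}\big) \lesssim T^{-N}e^{-\delta_\zbf t}.
\end{equation*}
Summing over the $|\Theta|\lesssim e^{\delta_\zbf t}$ net elements from Lemma~\ref{lemma:netsize} gives $\P(\Max_\zbf^t(\Lambda)\ge Te^{\delta_\zbf t/N})\lesssim T^{-N}$. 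The constants lost in comparing $\Max$ with the net maximum are absorbed into $T$, which only changes the bound by a factor $O(1)^N$. For $T\lesssim 1$ the statement is trivial.

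The main obstacle is the geometric containment step. We must check that the net property, which is stated for ellipsoids in $\R^n$, lifts to the full $N$-dimensional ellipsoids. This rests on the block structure of $H$ and on the comparability of an ellipsoid with the product of its two coordinate sections. Because $g^t$ is diagonal, the ellipsoid $g^{-t}\B_1^N$ is exactly $\{\sum_i x_i^2e^{-2z_it}\le1\}$. It therefore sits between $F_0\times E'$ and $\sqrt2$ times the ellipsoid, so the comparison costs only absolute constants. The second point to check is that Lemma~\ref{lemma:netsize} really produces $h_\theta\in\SO(n)$ with $F\subseteq CF_\theta$ for every $F = hF_0$. That lemma's proof does this, since it covers $\SO(n)$ by translates of the patch $\exp(P)$, for $t\gg1$.
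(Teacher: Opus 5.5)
Your proposal is correct and follows essentially the same route as the paper: cover the $H$-orbit of $g^{-t}\B_1^N$ by the $\lesssim e^{\delta_\zbf t}$ ellipsoids from Lemma~\ref{lemma:netsize}, take a union bound, use $G$-invariance of $\mu$ to move each net element to a fixed ball, and conclude with Corollary~\ref{cor:height=count} and Theorem~\ref{thm:cuspvolume}; your containment check $F_0\times E'\subseteq\sqrt{2}\,g^{-t}\B_1^N$ just spells out what the paper asserts parenthetically. One tiny fix: when you compare $|\Lambda\cap\B_{C'}^N|$ with $|\Lambda\cap\B_1^N|$ via differences, cover by balls of radius $1/2$ rather than $1$, so that the differences land in $\B_1^N$ and not in $\B_2^N$.
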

\begin{proof}
    Fix a scale $t \gg 1$. In view of \eqref{eq:maxtoheight}, we have
    \begin{equation*}
        \P\left( \Max_\zbf^t(\Lambda) \ge A \right) \lesssim \P\left(\, \sup_{h \in H} |g^th\Lambda \cap \B_1^N| \gtrsim A \right).
    \end{equation*}
    Let us set $E_0 = g^{-t}\B_1^N$; then $E_0$ is an ellipsoid of dimensions $e^{z_1 t} \times \cdots \times e^{z_Nt}$ whose $\R^n$-slice is of the dimensions for which Lemma \ref{lemma:netsize} applies (note that any net for dimension $n$ will also be a net for $HE_0$, perhaps after changing the constant $C$). Thus, we can find a set $\{E_\theta\}_{\theta \in \Theta}$ of $\lesssim e^{\delta_\zbf t}$ ellipsoids of this form for which each $hE_0$ lies in $CE_\theta$ for some $\theta \in \Theta$ and $C = O(1)$. By the union bound we have
    \begin{equation*}
        \P\left(\, \sup_{h \in H} |g^th\Lambda \cap \B_1^N| \gtrsim A \right) \le \sum_{\theta \in \Theta} \P\left( |\Lambda \cap CE_\theta| \gtrsim A \right).
    \end{equation*}
    Now, there exists $g_\theta \in G$ for which $g_\theta CE_\theta = \B_C^N$, and since $\mu$ is $G$-invariant, we obtain
    \begin{equation*}
        \sum_{\theta \in \Theta} \P\left( |\Lambda \cap CE_\theta| \gtrsim A \right) = |\Theta|\P(|\Lambda \cap \B_C^N| \gtrsim A).
    \end{equation*}
    But now $\P(|\Lambda \cap \B_C^N| \ge A) \asymp \mu\bigl(\lattice^{\thin,O(A^{-1})}\bigr) \asymp A^{-N}$ by Corollary \ref{cor:height=count} and Theorem \ref{thm:cuspvolume}, and so taking $A = Te^{\delta_\zbf t/N}$ and applying Lemma \ref{lemma:netsize} yields
    \begin{equation*}
        \P\left( \Max_\zbf^t(\Lambda) \ge Te^{\delta_\zbf t/N} \right) \lesssim |\Theta|(Te^{\delta_\zbf t/N})^{-N} \lesssim T^{-N},
    \end{equation*}
    as desired.
\end{proof}

\begin{cor}\label{cor:BCupperbound}
    For a.e. $\Lambda \in \lattice$, we have
    \begin{equation*}
        \limsup_{t \to \infty} \frac{1}{t}\log \Max_\zbf^t(\Lambda) \le \frac{\delta_\zbf}{N}.
    \end{equation*}
\end{cor}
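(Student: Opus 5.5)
We deduce Corollary \ref{cor:BCupperbound} from Theorem \ref{thm:cuspestimatefinitetimeprobupper} by a Borel--Cantelli argument along integer times, followed by an interpolation step that handles intermediate times.

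\textbf{Borel--Cantelli along a discrete sequence.} Fix $\eta > 0$. Apply Theorem \ref{thm:cuspestimatefinitetimeprobupper} at integer times $t = j \in \N$ with $T = e^{\eta j}$. This gives
\begin{equation*}
    \P\left( \Max_\zbf^j(\Lambda) \ge e^{(\delta_\zbf/N + \eta)j} \right) \lesssim e^{-N\eta j}.
\end{equation*}
These bounds are summable in $j$. By Borel--Cantelli, for $\mu$-a.e. $\Lambda$ we have $\Max_\zbf^j(\Lambda) < e^{(\delta_\zbf/N+\eta)j}$ for all sufficiently large integers $j$.

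\textbf{Passing to all real times.} For $t \in [j,j+1]$, write $g^t = g^{t-j}g^j$ and $s = t-j \in [0,1]$. The diagonal flow $g^t$ commutes with $H$, since $\exp(-t\zbf)$ is diagonal and $H$ is block diagonal with the identity on the last $N-n$ coordinates. However, the first $n$ entries of $\zbf$ need not be equal, so $g^{s}$ restricted to $\R^n$ need not commute with $\SO(n)$.

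This commutation is not actually needed. We have
\begin{equation*}
    \alpha(g^t h\Lambda) = \alpha(g^{s}(g^j h\Lambda)),
\end{equation*}
and $g^{s}$ ranges over the compact set $\{g^s : s\in[0,1]\}$. Each such $g^s$ distorts lengths and covolumes of sublattices of every dimension by at most a factor $e^{O(\|\zbf\|)}$. Hence $\alpha(g^s L) \asymp_{\zbf} \alpha(L)$ uniformly in $s\in[0,1]$ and in lattices $L$. Alternatively, one can use Corollary \ref{cor:height=count}: the set $g^{s}\B_1^N$ is sandwiched between $\B_c^N$ and $\B_C^N$, and counts in balls of comparable radii are comparable up to constants via Lemma \ref{lemma:sysbounds}, or by covering $\B_C$ with $O(1)$ translates of $\B_{1/2}$ and using the difference trick.

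Taking the supremum over $h$ gives $\Max_\zbf^t(\Lambda) \lesssim_\zbf \Max_\zbf^j(\Lambda)$ for $t\in[j,j+1]$. Therefore
\begin{equation*}
    \limsup_{t \to \infty} \frac{1}{t}\log \Max_\zbf^t(\Lambda) \le \frac{\delta_\zbf}{N} + \eta \quad\text{almost surely.}
\end{equation*}

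\textbf{Conclusion.} Intersect these full-measure sets over $\eta = 1/\ell$, $\ell \in \N$, to conclude. The only point needing care is the uniform comparability $\alpha(gL)\asymp\alpha(L)$ for $g$ in a compact subset of $G$. This follows directly from the definition of $\alpha$, since $g$ changes $p$-dimensional covolumes by at most $\|g\|^p\|g^{-1}\|^{p}$-type factors. I expect no real obstacle beyond this.
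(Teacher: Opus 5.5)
Your proposal is correct and follows the paper's route: Borel--Cantelli applied to the tail bound of Theorem \ref{thm:cuspestimatefinitetimeprobupper}. Your integer-time discretization, together with the uniform comparability $\alpha(g^sL)\asymp\alpha(L)$ for $s\in[0,1]$, is exactly the ``regularity of $\Max_\zbf^t$'' that the paper invokes without detail; the paper uses polynomial slack $T=t$ instead of your $T=e^{\eta j}$, which gives a sharper eventual bound but is not needed for this corollary. Your aside that $g^t$ commutes with $H$ is false unless $z_1=\cdots=z_n$, but, as you note yourself, it is never used.
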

\begin{proof}
    By Theorem \ref{thm:cuspestimatefinitetimeprobupper}, we have
    \begin{equation*}
        \int_1^\infty \P \left( \Max_\zbf^t(\Lambda) \ge te^{\delta_\zbf t/N} \right) \dt \lesssim \int_1^\infty t^{-N} \dt < \infty.
    \end{equation*}
    By regularity of $\Max_\zbf^t$, we apply Borel-Cantelli to obtain
    \begin{equation*}
        \P\left( \Max_\zbf^t(\Lambda) \ge te^{\delta_\zbf t/N}\ \text{infinitely often} \right) = 0.
    \end{equation*}
    In particular, taking logarithms yields
    \begin{equation*}
        \limsup_{t \to \infty} \frac{1}{t}\log \Max_\zbf^t(\Lambda) \le \frac{\delta_\zbf}{N}
    \end{equation*}
    almost surely.
\end{proof}

\subsection{The corresponding lower bound}

In this section, we establish the deterministic lower bound.

\begin{lemma}\label{lemma:cuspestimatefinitetimeproblower}
    For all $\Lambda \in \lattice$ and all $t \gg 1$, we have
    \begin{equation*}
        \Max_\zbf^t(\Lambda) \gtrsim e^{\rho_\zbf t/N}.
    \end{equation*}
\end{lemma}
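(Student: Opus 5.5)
The plan is to find one short lattice vector using a convex body that is invariant under $H$, and then choose the rotation $h \in H$ that aligns that vector with the longest axis of the ellipsoid $g^{-t}\B_1^N$. Since $\alpha \ge \alpha_1$, it suffices to produce, for each $t \gg 1$, some $h \in H$ and a nonzero $\vbf \in \Lambda$ with $|g^th\vbf| \lesssim e^{-\rho_\zbf t/N}$.

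First, define the convex body
\begin{equation*}
    K_t = \B^n_{e^{z_1 t}} \times \left\{ y \in \R^{N-n} : \sum_{i=n+1}^N e^{-2z_i t}y_i^2 \le 1 \right\}.
\end{equation*}
This is compact, convex and centrally symmetric, so $K_t \in \Kc_0^N$. Its volume is
\begin{equation*}
    \Vol(K_t) \asymp \exp\Bigl( t\bigl(nz_1 + \textstyle\sum_{i>n} z_i\bigr) \Bigr) = \exp\Bigl( t\bigl(nz_1 - \textstyle\sum_{i\le n} z_i\bigr) \Bigr) = e^{\rho_\zbf t},
\end{equation*}
where the middle equality uses $\sum_i z_i = 0$. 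By Minkowski's second theorem (Theorem \ref{thm:minksecond}) applied to the unimodular lattice $\Lambda$ and the body $K_t$, we have $\lambda_1(\Lambda)^N \le \prod_i \lambda_i(\Lambda) \asymp e^{-\rho_\zbf t}$. Hence there is a nonzero $\vbf \in \Lambda \cap \lambda_1 K_t$ with $\lambda_1 \lesssim e^{-\rho_\zbf t/N}$.

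Next, write $\vbf = (\vbf', \vbf'')$ with $\vbf' \in \R^n$. Choose $h \in H$, acting as $h_0 \in \SO(n)$ on the first $n$ coordinates, so that $h_0\vbf' = |\vbf'|e_1$; this is possible for $n \ge 2$, and if $\vbf' = 0$ take $h = \Id$. Since $h$ fixes $\vbf''$, we get $h\vbf = (|\vbf'|,0,\dots,0,\vbf'')$. Now $g^t = \exp(-t\zbf)$ multiplies the $i$-th coordinate by $e^{-z_i t}$. The first coordinate of $g^th\vbf$ is $e^{-z_1t}|\vbf'| \le \lambda_1$, and the last $N-n$ coordinates have Euclidean norm at most $\lambda_1$ by the definition of $K_t$. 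Therefore $|g^th\vbf| \lesssim \lambda_1 \lesssim e^{-\rho_\zbf t/N}$.

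Finally, $g^th\vbf$ is a nonzero vector of $g^th\Lambda$, so the line it spans gives a rank-one sublattice of covolume at most $|g^th\vbf|$. This yields $\alpha(g^th\Lambda) \ge \alpha_1(g^th\Lambda) \gtrsim e^{\rho_\zbf t/N}$, and hence the same bound for $\Max_\zbf^t(\Lambda)$. Alternatively, one can route this through \eqref{eq:maxtoheight} by counting the multiples of $g^th\vbf$ inside $\B_1^N$.

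I do not expect a serious obstacle here. The only points needing care are:
\begin{itemize}
    \item computing $\Vol(K_t)$ correctly using $\sum_i z_i = 0$;
    \item checking that $K_t$ is genuinely $H$-invariant and convex, so that Minkowski's second theorem applies with uniform constants;
    \item noting that the bound is uniform in $\Lambda$, since it uses only the unimodularity of $\Lambda$, and not any genericity.
\end{itemize}
The condition $t \gg 1$ is needed only so that the implied constants do not interfere, for instance when $\rho_\zbf = 0$.
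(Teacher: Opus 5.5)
Your proof is correct and is essentially the paper's argument: apply Minkowski to an $H$-invariant convex body of volume $\asymp e^{\rho_\zbf t}$, then rotate by an element of $H$ so that the short vector lies along the longest axis of $g^{-t}\B_1^N$ (the paper's body $D_M$ uses a box rather than an ellipsoid in the last $N-n$ coordinates). The only difference is cosmetic: the paper shrinks the body by a factor $M \ll e^{\rho_\zbf t/N}$ and counts the multiples $\vbf, 2\vbf, \dots, M\vbf$ in a tube to invoke Corollary~\ref{cor:height=count}, whereas you bound $\alpha_1(g^th\Lambda) \ge |g^th\vbf|^{-1}$ directly, which is slightly cleaner.
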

\begin{proof}
    Let us consider the region $D_M \subseteq \R^N$ given by
    \begin{equation*}
        D_M = \left\{ x \in \R^N : \norm{\proj_{\R^n}(x)} \lesssim M^{-1}e^{z_1t},\, |x_i| \lesssim M^{-1}e^{z_it}\ \text{for $i > n$} \right\}.
    \end{equation*}
    The set $D_M$ is an appropriate $\R^N$-thickening of $\B_{M^{-1}e^{z_1t}}^n \subseteq \R^n$. We compute
    \begin{equation*}
        \Vol(D_M) \asymp M^{-N}\exp\left({nz_1t + \sum_{i > n}z_it}\right) = M^{-N}e^{\rho_\zbf t}.
    \end{equation*}
    Thus, when $M \ll e^{\rho_\zbf t/N}$, we have $\Vol(D_M) \gg 1$, which by Minkowski's theorem implies that \textit{every} $\Lambda \in \lattice$ contains a nontrivial $v \in D_M \cap \Lambda$. But then we see that $v,2v,\dots,Mv$ all lie in a tube $hg^{-t}\B_{O(1)}^N$ for some $h \in H$, and so by Corollary \ref{cor:height=count} we obtain
    \begin{equation*}
        \alpha(g^th^{-1}\Lambda) \gtrsim e^{\rho_\zbf t/N}.
    \end{equation*}
\end{proof}

\begin{cor}\label{cor:BClowerbound}
    For all $\Lambda \in \lattice$ we have
    \begin{equation*}
        \liminf_{t \to \infty} \frac{1}{t}\log \Max_\zbf^t(\Lambda) \ge \frac{\rho_\zbf}{N}.
    \end{equation*}
\end{cor}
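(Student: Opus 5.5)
This corollary follows directly from Lemma \ref{lemma:cuspestimatefinitetimeproblower}, which is deterministic. No Borel--Cantelli step is needed, so the conclusion holds for every $\Lambda$ rather than almost every one.

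Fix $\Lambda \in \lattice$. By Lemma \ref{lemma:cuspestimatefinitetimeproblower}, there are a constant $c > 0$ and a threshold $t_0$, both independent of $\Lambda$ and $t$, such that
\begin{equation*}
    \Max_\zbf^t(\Lambda) \ge c\,e^{\rho_\zbf t/N} \quad \text{for all } t \ge t_0.
\end{equation*}
Taking logarithms and dividing by $t$ gives
\begin{equation*}
    \frac{1}{t}\log \Max_\zbf^t(\Lambda) \ge \frac{\rho_\zbf}{N} + \frac{\log c}{t}
\end{equation*}
for all $t \ge t_0$. The error term $\frac{\log c}{t}$ tends to $0$ as $t \to \infty$. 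Taking $\liminf_{t\to\infty}$ therefore yields the claimed bound $\rho_\zbf/N$.

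The one point to check is that the implied constant in Lemma \ref{lemma:cuspestimatefinitetimeproblower} does not depend on $t$. The constants there come from the volume comparison $\Vol(D_M) \asymp M^{-N}e^{\rho_\zbf t}$, from Minkowski's theorem, and from Corollary \ref{cor:height=count}. All of these depend only on $N$, $n$ and $\zbf$, not on $t$ or $\Lambda$.

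Strictly, the lemma requires $M$ to be an integer with $M \ll e^{\rho_\zbf t/N}$ and $M \ge 1$. Once $t \gg 1$, choosing $M = \lfloor c' e^{\rho_\zbf t/N}\rfloor$ meets both conditions and loses only a constant factor, which the $\frac{\log c}{t}$ term absorbs. In the degenerate case $\rho_\zbf = 0$, the bound follows instead from $\alpha \gtrsim 1$, which holds since a unimodular lattice has covolume $1$.
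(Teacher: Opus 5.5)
Your proposal is correct and matches the paper: the corollary is stated as an immediate consequence of the deterministic Lemma \ref{lemma:cuspestimatefinitetimeproblower}, whose constants are uniform in $t$ and $\Lambda$, so taking logarithms, dividing by $t$, and passing to the $\liminf$ gives the bound for every $\Lambda$. One small correction: in the case $\rho_\zbf = 0$, the bound $\alpha(\Lambda) \gtrsim 1$ comes from Minkowski's first theorem, which gives a nonzero vector of length $\lesssim 1$, rather than from the covolume of $\Lambda$ itself (since $\alpha$ only ranges over proper sublattices), and in any case the lemma already covers that case.
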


We have thus completed the proof of Theorem \ref{thm:maximalestimates}.

\begin{proof}[Proof of Theorem \ref{thm:maximalestimates}]
    This follows immediately from combining Corollaries \ref{cor:BCupperbound} and \ref{cor:BClowerbound}.
\end{proof}

\subsection{Logarithm laws}

Note that we can obtain more refined estimates on the behavior of $\Max_\zbf^t$ than Theorem \ref{thm:maximalestimates} using Theorem \ref{thm:cuspestimatefinitetimeprobupper}. In particular, we can show the following.

\begin{cor}\label{cor:loglaws}
    For a.e. $\Lambda \in \lattice$, we have
    \begin{equation*}
        \limsup_{t \to \infty} \frac{\log \Max_\zbf^t(\Lambda) - \delta_\zbf t/N}{\log t} \le \frac{1}{N}.
    \end{equation*}
\end{cor}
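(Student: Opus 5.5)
We will deduce the corollary from Theorem \ref{thm:cuspestimatefinitetimeprobupper} by a Borel--Cantelli argument along a discrete sequence of times, followed by an interpolation step to pass to all real $t$. Fix $\eta > 0$. We apply Theorem \ref{thm:cuspestimatefinitetimeprobupper} with $T = T_t = t^{(1+\eta)/N}$, which gives
\begin{equation*}
    \P\left( \Max_\zbf^t(\Lambda) \ge t^{(1+\eta)/N}e^{\delta_\zbf t/N} \right) \lesssim t^{-(1+\eta)}.
\end{equation*}
Restricting to integer times $t = j \in \N$, the right-hand side is summable in $j$. By Borel--Cantelli, for a.e. $\Lambda$ we then have $\Max_\zbf^j(\Lambda) < j^{(1+\eta)/N}e^{\delta_\zbf j/N}$ for all sufficiently large integers $j$.

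The second step is to pass from integer times to all $t$. For $t \in [j,j+1]$ we write $g^t = g^{t-j}g^j$, where $g^{t-j}$ ranges over the compact set $\{g^s : 0 \le s \le 1\}$. Since $g^s$ is diagonal and commutes with nothing in particular with $H$, we argue at the level of counts instead. By \eqref{eq:maxtoheight},
\begin{equation*}
    \Max_\zbf^t(\Lambda) \asymp \sup_{h} |\Lambda \cap hg^{-t}\B_1|.
\end{equation*}
For $s \in [0,1]$ the ellipsoid $g^{-t}\B_1$ is contained in $g^{-j}\B_{C}$ with $C = e^{\max_i |z_i|}$, and $\B_C$ is covered by $O(1)$ unit balls. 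This step is the one requiring care: each translate of a unit ball meets $\Lambda$ in at most as many points as a centrally symmetric body of comparable size, via a difference-set argument. The count $|\Lambda \cap hg^{-j}\B_C|$ is therefore $\lesssim |\Lambda \cap hg^{-j}\B_1|$ up to a constant, using Corollary \ref{cor:height=count} applied to $g^jh\Lambda$ together with the fact that $|L\cap \B_C| \lesssim_C |L \cap \B_1|$ uniformly over unimodular lattices. The latter holds since both are comparable to $\alpha(L)$; apply Corollary \ref{cor:height=count} to a dilate and note that $\alpha$ changes by at most a bounded factor under dilation by $C$ combined with renormalization. Consequently $\Max_\zbf^t(\Lambda) \lesssim \Max_\zbf^j(\Lambda)$ for $t \in [j,j+1]$, with a constant independent of $j$.

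Combining the two steps gives, for a.e. $\Lambda$ and all large $t$,
\begin{equation*}
    \log\Max_\zbf^t(\Lambda) - \frac{\delta_\zbf t}{N} \le \frac{1+\eta}{N}\log t + O(1).
\end{equation*}
Here the $\delta_\zbf/N$ discrepancy between $t$ and $j$ is also $O(1)$. Dividing by $\log t$ and taking $\limsup$ yields the bound $(1+\eta)/N$. Finally, intersecting the full-measure sets over $\eta = 1/m$, $m \in \N$, gives the stated bound $1/N$. The main obstacle is the uniform comparison $|L\cap\B_C|\lesssim|L\cap\B_1|$ used in the interpolation step; everything else is routine.
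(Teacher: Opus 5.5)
Your proposal is correct and is essentially the paper's proof: the paper takes $T = t^{1/N+\epsilon}$ (your $\eta = N\epsilon$) in Theorem~\ref{thm:cuspestimatefinitetimeprobupper}, applies Borel--Cantelli and sends $\epsilon \to 0$, leaving the passage from integer to real times implicit (the ``regularity of $\Max_\zbf^t$'' remark in Corollary~\ref{cor:BCupperbound}), which you make explicit. Your interpolation step is sound, but it is cleaner to justify it in one of two ways: either note directly that $\alpha(g^{s}L) \lesssim \alpha(L)$ uniformly for $s \in [0,1]$, since covolumes are distorted by a bounded factor, which gives $\Max_\zbf^t \lesssim \Max_\zbf^j$; or cover $\B_C$ by $O_C(1)$ balls of radius $\tfrac12$, so that differences land in $\B_1$ rather than $\B_2$. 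The ``dilate and renormalize'' argument is unnecessary, and Corollary~\ref{cor:height=count} as stated does not apply to the non-unimodular lattice $C^{-1}L$.
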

\begin{proof}
    Take $T = t^{1/N+\epsilon}$ in Theorem~\ref{thm:cuspestimatefinitetimeprobupper} and send $\epsilon \to 0$.
\end{proof}

In the case $n=1$, so that $\delta_\zbf = 0$, this recovers the upper bound of \cite{KleMar1}. We remark that the value $\limsup_{t \to \infty} \frac{1}{t}\log \Max_\zbf^t(\Lambda)$ depends on $\zbf$ whenever $n \ge 2$, in contrast to the $n=1$ case.

As an aside, let us show that for a.e. $\Lambda \in \lattice$, the value $\limsup_{t \to \infty} \log \Max_\zbf^t(\Lambda)$ is a fixed constant.

\begin{prop}\label{prop:asloglaw}
    Suppose $N > n$. Then there exists some $\rho_\zbf \le v_\zbf \le \delta_{\zbf}$ for which
    \begin{equation*}
        \limsup_{t \to \infty} \frac{1}{t}\log \Max_\zbf^t(\Lambda) = \frac{v_\zbf}{N}.
    \end{equation*}
    for a.e. $\Lambda$.
\end{prop}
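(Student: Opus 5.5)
Since Corollaries \ref{cor:BCupperbound} and \ref{cor:BClowerbound} already give $\rho_\zbf/N \le \liminf \le \limsup \le \delta_\zbf/N$ almost surely, the only content is that the random variable
\begin{equation*}
    V(\Lambda) = \limsup_{t \to \infty} \frac{1}{t}\log \Max_\zbf^t(\Lambda)
\end{equation*}
is a.e.\ constant. I would prove this with an ergodicity argument: find a subgroup $Q \le G$ acting ergodically on $(\lattice,\mu)$ such that $V$ is $Q$-invariant. Then $V$ is a.e.\ constant, and we call this constant $v_\zbf/N$. The bounds $\rho_\zbf \le v_\zbf \le \delta_\zbf$ follow from the two corollaries.

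The natural candidate for $Q$ is a subgroup acting only on the complementary coordinates $\R^{N-n}$, which is where the hypothesis $N > n$ enters. Take $Q$ to be the unipotent subgroup of matrices $u = \begin{bmatrix} \Id_n & B \\ & \Id_{N-n}\end{bmatrix}$ with $B \in M_{n \times (N-n)}(\R)$. This $Q$ is a noncompact unipotent subgroup, nontrivial precisely when $N > n$. By Moore's ergodicity theorem ($G$ is simple with finite center, $\Gamma$ a lattice, $Q$ noncompact), $Q$ acts ergodically, and in fact mixingly, on $\lattice$.

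The invariance check is the key step. Fix $u \in Q$. For $h \in H$, $huh^{-1}$ has block form $\begin{bmatrix} \Id & h_0 B \\ & \Id \end{bmatrix}$, so $huh^{-1} \in Q$ and, as $h$ ranges over $H$, these elements stay in a fixed compact subset of $Q$. Writing $g^t h u \Lambda = (g^t (huh^{-1}) g^{-t}) g^t h\Lambda$, I need the conjugate $g^t u' g^{-t}$ to be bounded. Its block entries are scaled by $e^{-(z_i - z_j)t}$ with $i \le n < j$. Since $\zbf$ is nonincreasing, $z_i \ge z_j$, so these factors are at most $1$ and the conjugates remain in a fixed compact set $C \subseteq G$ for all $t \ge 0$ and all $h$. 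Because $\alpha$ is coarsely Lipschitz under a compact perturbation, i.e.\ $\alpha(c\Lambda) \asymp_C \alpha(\Lambda)$ (for instance via Corollary \ref{cor:height=count}, since $c^{-1}\B_1 \subseteq \B_{O(1)}$), we get $\Max_\zbf^t(u\Lambda) \asymp_u \Max_\zbf^t(\Lambda)$ uniformly in $t$. Hence $V(u\Lambda) = V(\Lambda)$ for every $\Lambda$, not just almost every one. (The block direction matters: if the contracting-direction convention is reversed, I would use the transposed block instead. The choice is whichever makes $g^t Q g^{-t}$ non-expanding for $t \ge 0$.)

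The step I expect to need the most care is measurability of $V$, so that ergodicity applies. I would handle it by noting that $\Max_\zbf^t$ is lower semicontinuous in $\Lambda$ (a supremum of continuous functions over compact $H$) and coarsely continuous in $t$, by the same compact-perturbation argument applied to $g^{s}$ with $|s| \le 1$. So the $\limsup$ may be taken over integer $t$, which makes $V$ Borel. With measurability and invariance in hand, ergodicity of $Q$ gives the result.
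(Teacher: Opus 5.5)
Your argument is correct. It has the same skeleton as the paper's proof: Moore's ergodicity theorem for a noncompact subgroup, plus invariance of the $\limsup$ under that subgroup. The subgroup and the mechanism for invariance are different, though.

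The paper uses the centralizer $Z = C_G(AH)$. For $N > n$ this contains the diagonal flow $\exp(s\,\mathrm{diag}(1,\dots,1,-(N-1)))$. Since such a $z$ commutes with $g^t$ and with every $h \in H$, one gets $\Max_\zbf^t(z\Lambda) = \sup_{h \in H}\alpha(zg^th\Lambda) \asymp_z \Max_\zbf^t(\Lambda)$ in one line. The ordering $z_1 \ge \cdots \ge z_N$ plays no role there.

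You instead use the unipotent block group $Q$. It is only normalized by $H$, with the $H$-conjugates of a fixed $u$ staying in a compact set, and it is not expanded by conjugation by $g^t$ for $t \ge 0$. This is a stable-direction (Hopf-type) argument, and it genuinely needs the Weyl chamber condition on $\zbf$, as your remark about transposing the block indicates. Your computations are right: $huh^{-1}$ has upper-right block $h_0B$, and conjugation by $g^t$ scales the $(i,j)$ entry by $e^{-(z_i-z_j)t} \le 1$ for $i \le n < j$. So the invariance $V(u\Lambda) = V(\Lambda)$ holds for every $\Lambda$.

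For this statement the paper's choice is more economical, since exact commutation avoids any contraction estimate. Yours needs only bounded conjugates rather than commutation, so it also shows that $V$ is invariant along the $g^t$-contracted directions.

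Your handling of measurability also fills in what the paper dismisses as ``clearly measurable.'' You use continuity of $\alpha$, semicontinuity of $\Max_\zbf^t$ as a supremum over compact $H$, and a reduction of the $\limsup$ to integer times via $\Max_\zbf^{t+s} \asymp \Max_\zbf^t$ for $|s| \le 1$.
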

\begin{proof}
    Since $N > n$, the centralizer $Z = C_G(AH)$ of $AH$ is noncompact, since it contains for example the one-parameter subgroup $a^s = \exp(s \diag(1,...,1,-(N-1)))$.
    By the Howe-Moore theorem (see e.g. \cite[Theorem 2.2.6]{Zim}), the action $Z \acts \lattice$ is ergodic.
    
    We claim the function $f \colon \lattice \to [0,\infty)$, $\Lambda \mapsto \limsup_t \frac{1}{t}\log \Max_\zbf^t(\Lambda)$ (which is clearly measurable) is $Z$-invariant. To see this, suppose $z \in Z$ and $\Lambda \in \lattice$. Then
    \begin{equation*}
        \Max_\zbf^t(z\Lambda) = \sup_{h \in H} \alpha(g^thz\Lambda) = \sup_{h \in H}\alpha(zg^th\Lambda).
    \end{equation*}
    Now, the condition number of $z$ is some fixed constant, and so we have
    \begin{equation*}
        \sup_{h \in H} \alpha(zg^th\Lambda) \asymp_z \alpha(g^th\Lambda),
    \end{equation*}
    so that
    \begin{equation*}
        f(z\Lambda) = \limsup_{t \to \infty} \frac{\log \Max_\zbf^t(w\Lambda)}{t} = \limsup_{t \to \infty} \frac{\log \Max_\zbf^t(\Lambda) + O_z(1)}{t} = f(\Lambda)
    \end{equation*}
    By ergodicity of the $Z$-action, it follows that $f$ is $\mu$-a.e. constant.
\end{proof}

The case $N=n$ is easier.

\begin{lemma}
    If $N=n$, then
    \begin{equation*}
        \lim_{t \to \infty} \frac{1}{t}\log \Max_\zbf^t(\Lambda) = \sum_{1 \le i \le N} \max(z_i,0).
    \end{equation*}
    for every lattice $\Lambda \in \lattice$.
\end{lemma}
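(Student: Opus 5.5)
When $N=n$ we have $H=\SO(N)=K$, so $\Max_\zbf^t(\Lambda)=\sup_{h\in K}\alpha(g^th\Lambda)$. Write $S=\sum_i\max(z_i,0)$ and $p=\#\{i: z_i>0\}$. If $\zbf=0$ there is nothing to prove, so assume $1\le p\le N-1$. By \eqref{eq:maxtoheight}, which rests on Corollary \ref{cor:height=count}, it suffices to show
\begin{equation*}
    c(\Lambda)e^{St}\le \sup_{h\in K}|\Lambda\cap hE_t|\le C(\Lambda)e^{St}
\end{equation*}
for all $t\gg_\Lambda 1$, where $E_t=g^{-t}\B_1^N$ is the axis-aligned ellipsoid with semi-axes $e^{z_1t},\dots,e^{z_Nt}$. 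The constants may depend on $\Lambda$, since they disappear after taking $\frac1t\log$. Hence both the $\liminf$ and the $\limsup$ equal $S$, and the limit exists.

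\textbf{Upper bound.} Consider the semi-axes of $hE_t$. Those with $z_i\le 0$ have length $\le1$. Those with $z_i>0$ have length $e^{z_it}$. So $hE_t$ is contained in a rotated box, and this box can be covered by $\lesssim\prod_{z_i>0}e^{z_it}=e^{St}$ balls of radius $O(1)$, uniformly in $h$. By Lemma \ref{lemma:sysbounds}, each such ball contains $\lesssim\alpha_1(\Lambda)^N$ points of $\Lambda$. Summing over the balls gives $|\Lambda\cap hE_t|\lesssim\alpha_1(\Lambda)^Ne^{St}$ for every $h\in K$.

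\textbf{Lower bound.} This step uses the freedom to rotate.
\begin{enumerate}
    \item Let $V=\Span(e_1,\dots,e_p)$ be the span of the expanding directions.
    \item Pick any primitive $p$-dimensional sublattice $\Delta\le\Lambda$, for instance one spanned by vectors realizing the first $p$ successive minima. It has some finite covolume $c_\Delta(\Lambda)$.
    \item Choose $h\in K$ with $h\,\Span_\R(\Delta)=V$. Then $h\Delta\subseteq V$ is a lattice of covolume $c_\Delta(\Lambda)$, and $h^{-1}E_t\supseteq h^{-1}(E_t\cap V)$.
    \item The slice $E_t\cap V$ is a $p$-dimensional centrally symmetric ellipsoid of volume $\asymp e^{St}$, and all of its semi-axes are $\ge1$ and grow with $t$.
    \item Apply Theorem \ref{thm:VDC} in dimension $p$, after rescaling $h\Delta$ to unit covolume. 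This gives
    \begin{equation*}
        |\Lambda\cap h^{-1}E_t|\ge|h\Delta\cap(E_t\cap V)|\gtrsim e^{St}/c_\Delta(\Lambda).
    \end{equation*}
\end{enumerate}
Since $h^{-1}\in K$, the ellipsoid $h^{-1}E_t$ is one of the sets in the supremum, which gives the lower bound.

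\textbf{Main obstacle.} The delicate point is the lower bound. Without rotating, Theorem \ref{thm:VDC} applied to $E_t$ itself only yields a count $\gtrsim\Vol(E_t)=1$, because $\sum_iz_i=0$. The whole gain comes from aligning a sublattice of bounded covolume with the expanding subspace $V$. This is exactly what is impossible when $N>n$ and only $\SO(n)$ acts, and that is why the general bounds in Theorem \ref{thm:maximalestimates} are weaker.
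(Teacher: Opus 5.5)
Your proof is correct. Its lower bound uses the same idea as the paper's: rotate a rank-$p$ sublattice of $\Lambda$ into $\Span(e_1,\dots,e_p)$, the directions with $z_i>0$, so that $g^t$ shrinks its covolume by exactly $e^{-St}$. The difference is in how this is measured, and in the upper bound.

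\emph{Lower bound.} The paper stays on the height side. It uses $G=KP$ to rotate $\Lambda$ to $p\Z^N$ with $p$ upper triangular. It then notes that $p(\Z e_1+\cdots+\Z e_{i_0})\subseteq\Span(e_1,\dots,e_{i_0})$ and reads off $\Covol(g^tp\Delta_0)=\prod_{i\le i_0}e^{-z_it}p_{ii}$ directly, with no counting. You instead pass through Corollary \ref{cor:height=count} and apply van der Corput to the slice $E_t\cap V$. This costs only constants depending on $N$ and $\Lambda$, which vanish after taking $\frac1t\log$.

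\emph{Upper bound.} The paper only says it is ``clear from the action of $g^t$.'' Your argument, covering $hE_t$ by $\lesssim e^{St}$ balls of radius $O(1)$ and applying Lemma \ref{lemma:sysbounds} to each, supplies an actual proof. If you prefer to stay on the height side here too, there is a slightly sharper route. For any rank-$q$ sublattice $\Delta'$, the map $g^t$ scales $q$-dimensional volume by at least the product of its $q$ smallest singular values, $e^{-t(z_1+\cdots+z_q)}\ge e^{-St}$. Hence $\Covol(g^t\Delta')\ge e^{-St}\Covol(\Delta')$, and so $\alpha(g^th\Lambda)\le e^{St}\alpha(\Lambda)$ for every $h\in K$. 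This gives the bound with constant $1$ and avoids the count--height comparison altogether.
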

\begin{proof}
    Recall $g^t = \exp(-t\zbf)$. Let $P \le G$ be the subgroup of upper triangular matrices. Then we have a decomposition $G = KP$, so that each $\Lambda \in \lattice$ has in its $K$-orbit a unique lattice $p \Z^N$ for some $p \in P$. Let $i_0 \ge 1$ be the largest index for which $z_i > 0$, and consider the subgroup $\Delta_0 \le \Z^N$ defined by $\Delta_0 = \Z \ebf_{1} + \cdots + \Z \ebf_{i_0}$. Then $\Delta = p\Delta_0 \le p\Z^N$ is a well-defined subgroup of $p\Z^N$, and $\Covol(\Delta) = \prod_{i = 1}^{i_0} p_{ii}$. Thus, we have
    \begin{equation*}
        \Covol(g^t\Delta) = \prod_{i=1}^{i_0} e^{-z_it}p_{ii}.
    \end{equation*}
    Now, we have $p_{ii} \asymp_\Lambda 1$, so
    \begin{equation*}
        \log \Covol(g^t\Delta) = -t\sum_{i=1}^{i_0} z_i + O_\Lambda(1).
    \end{equation*}
    Therefore, we obtain
    \begin{equation*}
        \liminf_{t \to \infty} \frac{1}{t}\log \Max_\zbf^t(\Lambda) \ge \sum_{i=1}^{i_0}z_i.
    \end{equation*}
    On the other hand, it is clear from the action of $g^t$ on $\R^N$ that
    \begin{equation*}
        \limsup_{t \to \infty} \frac{1}{t}\log \Max_\zbf^t(\Lambda) \le \sum_{i=1}^{i_0} z_i,
    \end{equation*}
    and so we are done.
\end{proof}

\begin{rmk}
    When $N=n$, we verify that
    \begin{equation*}
        \frac{\rho_\zbf}{N} = z_1 \le \sum_{1 \le i \le N} \max(z_i,0) \le \frac{1}{N}\sum_{i=1}^N \left( (N-i)z_i + \sum_{j=1}^i z_j \right) = \frac{\delta_\zbf}{N}.
    \end{equation*}
\end{rmk}

\subsection{Some additional remarks}

The maximal functions $\Max_\zbf^t$ can be compared to \textit{averaging operators} $\Avg^t$ of the form
\begin{equation*}
    \Avg^tf(\Lambda) = \int_K f(g^tk\Lambda)\dk.
\end{equation*}
These operators have been studied extensively in the context of quantitative nondivergence results, with the ``system of integral inequalities'' argument pioneered by Eskin-Margulis-Mozes in \cite{EskMarMoz} alongside the use of \textit{Margulis functions} (which can be constructed via suitable modifications of $\alpha$), and have had a tremendous influence on dynamics since. See \cite{EskMoz} for a survey article on Margulis functions, as well as \cite{EskMirMoh} for landmark results in the translation surface case. For us, the case $H=K$ is uninteresting, as then one can freely rotate any lattice $\Lambda$ so that it has a vector pointing exactly in the direction of strongest contraction under $g^t$, so that $\Max_\zbf^t(\Lambda) \gtrsim e^{z_1t} = e^{\rho_\zbf t/N}$. In particular, one cannot hope to extract bounds for $\Max$ from bounds on $\Avg$ without additional effort.

The averaging operators have also been studied in a more general context beginning with Eskin-McMullen in \cite{EskMcM}, where they prove equidistribution and counting-type results using averages of the shape
\begin{equation*}
    \int_{gH/(\Gamma \cap H)} f(h)\dh,
\end{equation*}
for $H \le G$ so that $G/H$ is \textit{affine symmetric}, i.e., so that $H$ is the fixed point set of an involution. In particular, see \cite[Theorem 3.1]{EskMcM}.

%% file: Sections/6-proofofmt.tex
\section{Power loss for local Mizohata-Takeuchi}

In this section, we apply Theorems \ref{thm:countingnearmanifolds} and \ref{thm:maximalestimates} to prove Proposition \ref{prop:oldmainproposition}, and consequently Theorem \ref{thm:LocalMTFailsalways}. Throughout, we assume $N \ge 3$, and let $\iota \colon \R^n \hookrightarrow \R^N$ be the standard embedding in the first $n$ coordinates, with adjoint $\pi \colon \R^N \to \R^n$. We also set $V = \iota \R^n$.

We in fact prove the following stronger version of Proposition \ref{prop:mainproposition}, which we will apply to obtain genericity results in Section \ref{section:genericity}. Let $0 < \rho_0 \le 1$ be a constant whose value is allowed to depend on $n,N$ but not $\epsilon,R$; this additional parameter will be pinned down in Lemma \ref{lemma:propertyiii}.

Fix now some $\psi \in C^k([0,1]^{n-1})$, some $0 < \epsilon < \frac{1}{10}$, and some $R \ge C_0\rho_0^{-(N-n)/q}\epsilon^{-1/q}$ as defined in Theorem \ref{thm:countingpointsnge3}.

\begin{prop}\label{prop:mainproposition}
    There exists a set $\Gc_{\psi,\epsilon,R} \subseteq \lattice$ of measure at least $\frac{1}{1000}$ so that for each $\Lambda \in \Gc_{\psi,\epsilon,R}$, there exist:
    \begin{enumerate}[label=(\arabic*)]
        \item\label{perturbations} Perturbations $\eta_\Lambda \colon [0,1]^{n-1} \to \R$ so that $\norm{\eta_\Lambda}_{C^k} \lesssim \epsilon$, and with associated submanifolds
        \begin{equation*}
            \Sigma_\Lambda = \left\{ (\xi,(\psi + \eta_\Lambda)(\xi)) : \xi \in [0,1]^{n-1} \right\};
        \end{equation*}
        \item\label{weights'} weights $w_\Lambda \colon \B_R^n \to [0,\infty)$ satisfying:
        \begin{enumerate}[label=(\roman*)]
            \item\label{propertyi} $\norm{w_\Lambda}_{L^1} \lesssim R^{n-1}$;
            \item\label{propertyii} $\norm{Xw_\Lambda}_{L^\infty} \lesssim R^{\frac{n-1}{N}}$;
            \item\label{propertyiii} $\norm{\hat{w}_\Lambda}_{L^2(\Sigma_{f})}^2 \gtrsim R^qR^{n-1}$ for any $f \in C^k$ with $\norm{f - (\psi+\eta_\Lambda)}_{C^k} \ll R^{-1}$.
        \end{enumerate}
    \end{enumerate}
\end{prop}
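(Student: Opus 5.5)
The plan is to build the weight from a random unimodular lattice $\Lambda\in\lattice$ at scale $R^{-1/N}$, and to define $\Gc_{\psi,\epsilon,R}$ as the intersection of two events. The first is the good event of Theorem \ref{thm:countingpointsnge3}, applied with $m=n-1$, of probability $\ge \frac{1}{100}$. The second is a cusp-avoidance event, of probability $\ge 1-\frac{1}{200}$, coming from Theorem \ref{thm:cuspestimatefinitetimeprobupper}.

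\emph{The weight.} Let $h_\Lambda$ be a smoothed, restricted Dirac comb:
\begin{equation*}
h_\Lambda=\sum_{\vbf\in R^{-1/N}\Lambda,\ |\pi\vbf|\le 2,\ |\vbf-\pi\vbf|\le\rho_0}\phi_R(\cdot-\pi\vbf),
\end{equation*}
viewed on $\R^n$. Set $w_\Lambda=|\check h_\Lambda|^2\, b_R$, or equivalently take $w_\Lambda$ to be a bump-weighted sum over the projected dual lattice, which is supported on $\B_R^n$.

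\emph{Properties (i) and (iii).} For (i), write $\norm{w_\Lambda}_{L^1}\approx\int_{\B_R}|\check h_\Lambda|^2$. By Plancherel this is roughly $R^n\sum_{\vbf,\vbf'}\hat b(R(\pi\vbf-\pi\vbf'))$. After discarding a measure-$\frac{1}{1000}$ set of lattices whose projected points coincide up to scale $R^{-1}$, this is $\lesssim R^n\cdot\#\{\vbf\}/R$. Siegel's formula gives $\#\{\vbf\}\asymp R\rho_0^{N-n}$ on average, which yields $\lesssim R^{n-1}$ after normalization.

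For (iii), let $\{\vbf_\theta\}$ be the $\gtrsim\epsilon R^q$ points from Theorem \ref{thm:countingnearmanifolds}, lying in the slabs $\Sigma_\psi(\epsilon,\theta)$ and $R^{-\beta}$-separated. Define $\eta_\Lambda=\sum_\theta (\vbf_\theta^{(n)}-\psi(\xi_\theta))\,\chi\bigl(R^{\beta}(\xi-\xi_\theta)\bigr)$ with a fixed bump $\chi$. Each coefficient is $\le\epsilon R^{-k\beta}$, so $\norm{\eta_\Lambda}_{C^k}\lesssim\epsilon$, and $\Sigma_\Lambda$ passes through every $\pi\vbf_\theta$. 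Near each such point, $\hat w_\Lambda$ has a bump of height $\gtrsim R^{n-1}$ in $L^2$-mass over a cap of size $R^{-1}$. Hence $\norm{\hat w_\Lambda}_{L^2(\Sigma)}^2\gtrsim R^qR^{n-1}$. This bound is stable under $C^k$ perturbations of size $\ll R^{-1}$, since such a perturbation moves the surface by much less than the width $R^{-1}$ of the bumps. Any cross terms are nonnegative because $\hat b_1\ge0$.

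\emph{Property (ii): the main obstacle.} For a line $\ell$ in direction $h\ebf_1$, the integral $Xw_\Lambda(\ell)$ is controlled by $R^{1-?}$ times the number of dual-lattice points lying in a tube of length $R$ and width $1$ around $\ell$, i.e.\ $\sup_h|\Lambda^\vee\cap hg^{-t}\B_{O(1)}|$. Here $\zbf$ is chosen so that $\delta_\zbf=\rho_\zbf$ (only $z_1$ distinguished) and $e^{\rho_\zbf t}$ matches $R^{n-1}$. By \eqref{eq:maxtoheight} this count is $\Max_\zbf^t(\Lambda^\vee)$, and Theorem \ref{thm:cuspestimatefinitetimeprobupper} with $T=O(1)$ bounds it by $e^{\delta_\zbf t/N}=R^{(n-1)/N}$ outside a set of small measure. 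Since $\Lambda\mapsto\Lambda^\vee$ preserves $\mu$, this set is harmless. The delicate points are matching the scalings $R^{1/N}$ and the $\rho_0$-thickness in the normal directions, and absorbing the Schwartz tails of the bumps into the tube count. I would handle the tails by a dyadic decomposition into tubes $2^j$ times thicker, each controlled by the same theorem with rapidly decaying weights. A union bound over all the events then gives $\mu(\Gc_{\psi,\epsilon,R})\ge\frac{1}{1000}$.
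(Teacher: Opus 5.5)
Your plan follows the paper's architecture. The perturbation $\eta_\Lambda$ is built exactly as in Lemma \ref{lemma:mainprobabilisticlemma}. Property (ii) goes through tube counts for $\Lambda^\vee$ with $\zbf=(1-\frac1N,-\frac1N,\dots,-\frac1N)$ and $t=\log R$, then Theorem \ref{thm:cuspestimatefinitetimeprobupper} and the $\mu$-invariance of $\Lambda\mapsto\Lambda^\vee$. Property (iii) uses $\hat b_1\ge0$ and stability at scale $R^{-1}$. Doing (i) by Plancherel on the frequency side is a fine substitute for Lemmas \ref{lemma:diagonalestimate} and \ref{lemma:offdiagonalestimate}: the diagonal contributes $R^{n-2}|L\cap S|\asymp R^{n-1}$, and off-diagonal pairs with $|\pi(\vbf-\vbf')|\lesssim R^{-1}$ are rare by a first-moment bound.

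\textbf{The main gap is in (iii).} You assert the height of $\hat w_\Lambda$ near $\pi\vbf_\theta$, but this is exactly where the lattice structure is used. With $L=R^{-1/N}\Lambda$ and your normalization,
\begin{equation*}
    \hat w_\Lambda(\xi)=R^{-2}\sum_{\vbf,\vbf'\in L\cap S}\widehat{b_R^3}\bigl(\xi-\pi\vbf+\pi\vbf'\bigr).
\end{equation*}
So height $R^{n-1}$ at $\pi\vbf_\theta$ requires $\gtrsim R$ pairs with $\vbf-\vbf'=\vbf_\theta$. The pair $(\vbf_\theta,0)$ alone gives only $R^{n-2}$, a full power of $R$ short. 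The paper's device is the $B_c$ versus $B_{2c}$ step in Lemma \ref{lemma:propertyiii}. If $\vbf_\theta\in K$ with $K$ convex, centered, and $2K\subseteq S$, then $\wbf\mapsto(\wbf+\vbf_\theta,\wbf)$ for $\wbf\in L\cap K$ gives $|L\cap K|\gtrsim R\Vol(K)$ such pairs by Theorem \ref{thm:VDC}. In your setup, the comb and Theorem \ref{thm:countingpointsnge3} use the same normal radius $\rho_0$. Then $S\cap(S+\vbf_\theta)$ is not centered and Theorem \ref{thm:VDC} does not apply. Invoke Theorem \ref{thm:countingpointsnge3} with $\rho_0/2$ instead.

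\textbf{In (ii), the sharp cutoff contradicts the ``Schwartz tails'' you invoke.} Poisson summation turns $\check h_\Lambda$ into a sum over $L^\vee$ of translates of $\check{\ind}_S$. These decay only like $|x|^{-(n+1)/2}$ along $V$ and $|x|^{-(N-n+1)/2}$ transversally, and are not absolutely summable over the dual lattice. So the reduction to dyadically fattened tube counts fails as written. Use a smooth cutoff, as the paper does with $b_1$. Then, with the $R^{-1}$ normalization implicit in your (i), Poisson summation gives exactly $w_\Lambda=b_R^3\bigl|\sum_{\xi\in R^{1/N}\Lambda^\vee}\phi_1(\iota x-\xi)\bigr|^2$. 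In particular, your prefactor $R^{1-?}$ is $R^0$.

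Alternatively, keep $\ind_S$ and argue on the frequency side. For $\ell$ in direction $e$, $\int_\ell w_\Lambda\lesssim R^{-1}\sum_{\vbf,\vbf'\in L\cap S}(1+R|e\cdot\pi(\vbf-\vbf')|)^{-M}$. This reduces to counting $L$ in $R^{-1}$-thin slabs normal to $e$, which is the dual form of the same cusp estimate.

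You also never address the cross terms $\xi\ne\xi'$ in the square. The paper uses Lemma \ref{lemma:offdiagonalestimate}, but it suffices to restrict to the event that $\Lambda^\vee$ has systole $\gtrsim1$. On that event $\sup_x\sum_\xi\phi_1(x-\xi)\lesssim1$, hence $w_\Lambda\lesssim\sum_\xi\phi_1(\iota x-\xi)$. With these repairs, your union bound gives the stated measure.
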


\begin{rmk}
    We note that, in view of Remark \ref{rmk:highprob}, we may in fact take $\Gc_{\psi,\epsilon,R}$ to have measure $1-o(1)$.
\end{rmk}

\subsection{Construction of perturbations}

To start, we apply Theorem \ref{thm:countingnearmanifolds} in the case of a hypersurface to establish that a $C^k$-small perturbation of the surface $\Sigma$ can hit many disparate lattice points. This will be used to establish Proposition \ref{prop:mainproposition}\ref{weights'}\ref{propertyiii}.

\begin{lemma}\label{lemma:mainprobabilisticlemma}
    There exists a subset $\Gc_{\psi,\epsilon,R}' \subseteq \lattice$ of measure at least $\frac{1}{100}$ for which there exists a perturbation $\eta_\Lambda$ of $\psi$ which is $\epsilon$-close in the $C^k$-topology, and so that the intersection
    \begin{equation*}
        \pi(R^{-1/N}\Lambda \cap \B_{\rho_0}^N) \cap \Sigma_\Lambda
    \end{equation*}
    contains $\gtrsim \epsilon R^q$ points which are $\gtrsim R^{-\beta}$-separated.\footnote{Here, we are permitted to absorb $\rho_0$ into the implicit constants, since it does not depend on $\epsilon,R$.}
\end{lemma}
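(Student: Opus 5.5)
The plan is to apply Theorem \ref{thm:countingpointsnge3} with $m = n-1$, so that $q = \frac{n-1}{n-1+k}$ and $\beta = \frac{1}{n-1+k}$. Then to build $\eta_\Lambda$ as a sum of disjointly supported bumps, one per good cube $I_\theta$, which lift the graph of $\psi$ to pass exactly through one projected lattice point in each good thickening $\Sigma_\psi(\epsilon,\theta)$.

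\textbf{Step 1: the good event.} By Theorem \ref{thm:countingpointsnge3}, with probability at least $\frac{1}{100}$, the set $\Theta' \subseteq \Theta$ of $\theta$ for which $R^{-1/N}\Lambda \cap (\Sigma_\psi(\epsilon,\theta) \times \B_{\rho_0}^{N-n}) \ne \emptyset$ has $|\Theta'| \gtrsim \epsilon R^q$. Here $\rho_0$, which is fixed independently of $\epsilon$ and $R$, is absorbed into the constants. As in the proof of Theorem \ref{thm:countingnearmanifolds}, we discard the measure-zero set of lattices on which $\pi$ fails to be injective. Call the resulting set $\Gc'_{\psi,\epsilon,R}$.

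\textbf{Step 2: selecting points.} For each $\theta \in \Theta'$, choose a point $\vbf_\theta$ in this intersection. Its projection has the form $\pi\vbf_\theta = (\xi_\theta, \psi(\xi_\theta) + y_\theta)$ with $\xi_\theta \in I_\theta$ and $0 \le y_\theta < \epsilon R^{-k\beta}$. Note that $\vbf_\theta \in \Sigma_\psi(\epsilon,\theta)\times\B^{N-n}_{\rho_0}$. Since $\Sigma_\psi \subseteq \B_1^n$, this set lies in a ball of radius $O(1)$. The lemma's $\B_{\rho_0}^N$ should therefore be read as this product region, or else we rescale constants.

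Because the thickenings are $R^{-\beta}$-separated, the points $\pi\vbf_\theta$ are $\gtrsim R^{-\beta}$-separated. After shrinking the cubes slightly, which costs only constants, we may also assume each $\xi_\theta$ lies in the middle half of $I_\theta$.

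\textbf{Step 3: the bumps.} Fix $\chi \in C_c^\infty(\B_{1/2}^{n-1})$ with $\chi(0) = 1$ and $\norm{\chi}_{C^k} \lesssim 1$. Define
\begin{equation*}
    \eta_\Lambda(\xi) = \sum_{\theta \in \Theta'} y_\theta\, \chi\bigl(c R^{\beta}(\xi - \xi_\theta)\bigr)
\end{equation*}
for a small constant $c$, so that the summands have disjoint supports inside the respective cubes. Then $(\psi + \eta_\Lambda)(\xi_\theta) = \psi(\xi_\theta) + y_\theta$, so every $\pi\vbf_\theta$ lies on $\Sigma_\Lambda$.

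The key estimate is the $C^k$ bound. For $|\gamma| \le k$, each bump satisfies
\begin{equation*}
    \norm{D^\gamma \eta_\Lambda}_{L^\infty} \lesssim \max_\theta y_\theta R^{|\gamma|\beta} \le \epsilon R^{(|\gamma|-k)\beta} \le \epsilon.
\end{equation*}
Disjointness of supports means these bounds do not accumulate across $\theta$. This is exactly the balancing built into the choice $\beta = 1/(m+k(n-m))$: the transversal thickness $\epsilon R^{-k\beta}$ matches the $k$-th derivative scaling at width $R^{-\beta}$.

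\textbf{Main obstacle.} The only delicate points are bookkeeping. The bumps must fit inside the cubes while still passing through the chosen points, which is handled by the middle-half reduction. The lattice points must also land in the stated ball, which follows from the product structure and the fact that $\Sigma \subseteq \B_1^n$. Neither step affects the exponents.
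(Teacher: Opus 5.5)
Your proposal is correct and follows the paper's proof essentially step for step: you apply Theorem~\ref{thm:countingpointsnge3} with $m=n-1$, pick one projected lattice point in each good thickening $\Sigma_\psi(\epsilon,\theta)$, and add disjointly supported bumps of width $\sim R^{-\beta}$ and height $O(\epsilon R^{-k\beta})$, which gives $\norm{\eta_\Lambda}_{C^k}\lesssim\epsilon$. Your remark about the ball of radius $\rho_0$ versus the product region is fair, since the paper's own proof silently works with the unit ball at that point. One small slip: $\chi$ is supported in the ball of radius $1/2$, so the rescaled bump $\chi(cR^{\beta}(\xi-\xi_\theta))$ has support radius $R^{-\beta}/(2c)$. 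You therefore need $c$ large rather than small, which only changes the $C^k$ bound by a factor $c^k=O(1)$.
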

\begin{proof}
    We apply Theorem \ref{thm:countingpointsnge3} with $m = n-1$. For each $\theta \in \Theta$, choose (if possible) a point
    \begin{equation*}
        \vbf \in \pi(R^{-1/N}\Lambda \cap \B_1^N) \cap \Sigma_\psi(\epsilon,\theta).
    \end{equation*}
    Note that, with probability at least $\frac{1}{100}$, we are allowed to choose $\gtrsim \epsilon R^q$ such points $\vbf$. Let us additionally define the projections
    \begin{equation*}
        \pi_{\hor} \colon \R^{n-1} \times \R \to \R^{n-1}, \quad \quad \pi_{\ver} \colon \R^{n-1} \times \R \to \R.
    \end{equation*}
    For each $\vbf$, let $\chi_\vbf \colon \R^{n-1} \to [0,\infty)$ be a smooth bump function satisfying
    \begin{equation*}
        \ind_{\B_{R^{-\beta}/4}(\pi_{\hor}(\vbf))} \le \chi_\vbf \lesssim \ind_{\B_{R^{-\beta}/2}(\pi_{\hor}(\vbf))}.
    \end{equation*}
    Note that the functions $\chi_\vbf$ have disjoint supports by the separation condition on $\Theta$, and moreover we can choose these bump functions to satisfy
    \begin{equation*}
        \norm{\chi_\vbf}_{C^k} \lesssim R^{k\beta}.
    \end{equation*}
    We define perturbations $\eta_\Lambda$ by
    \begin{equation*}
        \eta_\Lambda = \sum_\vbf a_\vbf \chi_\vbf,
    \end{equation*}
    where $a_\vbf = \pi_{\ver}(\vbf) -\psi(\pi_{\hor}(\vbf))$. By construction, we have $|a_\vbf| \lesssim \epsilon R^{-k\beta}$, and so
    \begin{equation*}
        \norm{\eta_\Lambda}_{C^k} \lesssim \epsilon.
    \end{equation*}
    Therefore, $\Sigma_\Lambda = \{(\xi,(\psi+\eta_\Lambda)(\xi)) : \xi \in [0,1]^{n-1}\}$ satisfies the desired conditions.
\end{proof}

\subsection{Construction of weights}

The goal of this section is to prove Proposition \ref{prop:mainproposition}\ref{weights'}, which will conclude the proof of Theorem \ref{thm:LocalMTFailsalways}. From now on, we treat $\Lambda \in \lattice$ as a $\mu$-distributed random lattice. Let us define the functions
\begin{equation*}
    H_R = R^{-1}\phi_R * (b_1\delta_{R^{-1/N}\Lambda}), \quad \hat{h}_R = \hat{H}_R \circ \iota, \quad W_R = \hat{H}_R^2, \quad w_R = \hat{h}_R^2.
\end{equation*}
We may alternately express
\begin{equation*}
    w_R(x) = \left| \left( \phi_1 * \sum_{\xi \in R^{1/N}\Lambda^\vee} \delta_\xi \right)(\iota x)b_R(\iota x) \right|^2.
\end{equation*}

We will need two lemmas. The first one will help to bound the main terms in all of our estimates.

\begin{lemma}\label{lemma:diagonalestimate}
    If $K$ is an $n$-dimensional centrally symmetric convex body in $\R^N$, then
    \begin{equation*}
        \sum_{\xi \in R^{1/N}\Lambda^\vee} \int_{K+y} \phi_1^2(x-\xi)\dx \lesssim |R^{1/N}\Lambda^\vee \cap \Nc_2(2K)|
    \end{equation*}
    uniformly in $y \in \R^N$ and $\Lambda \in \lattice$.
\end{lemma}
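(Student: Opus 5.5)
We need to bound $\sum_\xi \int_{K+y}\phi_1^2(x-\xi)\,dx$, where $\phi_1=\hat b_1$ is a Schwartz function, so $|\phi_1(z)|\lesssim_M (1+|z|)^{-M}$ for every $M$. Here $K$ is $n$-dimensional, so the integral is with respect to $n$-dimensional Lebesgue measure on the affine $n$-plane $K+y$. The plan is to split the sum into lattice points close to $K+y$ and far from it, and to control the far points by a dyadic decomposition together with a lattice-point count in the dilates of $\Nc_2(2K)$.

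\textbf{Step 1: the translate $y$.} I would first remove $y$. If $K+y$ meets no $\xi$-neighbourhood, the estimate is only easier. More precisely, if $x_0 \in K+y$, then $K+y\subseteq x_0+2K$ because $K$ is symmetric. For the near part I would pick a lattice point $\xi_0\in L:=R^{1/N}\Lambda^\vee$ with $\dist(\xi_0,K+y)\le 1$, if one exists. Then $K+y\subseteq \Nc_1(\xi_0+2K)$, and every $\xi$ with $\dist(\xi,K+y)\le 1$ satisfies $\xi-\xi_0\in\Nc_2(2K)$. Translating by $\xi_0$ therefore bounds the number of such $\xi$ by $|L\cap \Nc_2(2K)|$. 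If no such $\xi_0$ exists, the near part is empty. Each near term is at most $\int_{\R^n}\phi_1^2\lesssim 1$, since integrating a Schwartz function over any $n$-plane gives $O(1)$.

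\textbf{Step 2: far shells.} For $j\ge 1$, let $S_j$ be the set of $\xi$ with $2^{j-1}<\dist(\xi,K+y)\le 2^j$. For such $\xi$, split the integral over $K+y$ into the region where $|x-\xi|\le 2^{j-1}$ (empty) and unit-scale pieces. This gives
\[
\int_{K+y}\phi_1^2(x-\xi)\,dx \lesssim_M 2^{-jM}.
\]
It remains to count $|S_j|$. Covering $S_j$ requires bounding $|L\cap(\xi_0'+\Nc_{2^{j+1}}(2K))|$ for a lattice point $\xi_0'\in S_j$, and I would show
\[
|L\cap \Nc_{2^{j+1}}(2K)| \lesssim 2^{jN}\,|L\cap \Nc_2(2K)|.
\]

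\textbf{Main obstacle.} Proving this doubling bound is the key step. My plan is to cover $\Nc_{2^{j+1}}(2K)$ by $O(2^{jN})$ translates of $\Nc_1(K)$. This is possible because $2K+\B_{2^{j+1}}\subseteq 2^{j+1}(K+\B_1)$, and a convex symmetric body $2^{j+1}C$ (with $C=K+\B_1$) is covered by $O(2^{jN})$ translates of $C/2$. Then, for any convex symmetric $C'$, each translate satisfies $|L\cap(z+C'/2)|\le |L\cap C'|$ by differencing: any two points of $L$ in $z+C'/2$ differ by an element of $L\cap C'$. Taking $C'=\Nc_1(2K)\subseteq \Nc_2(2K)$ gives the doubling bound uniformly in $\Lambda$.

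\textbf{Conclusion.} Summing, the left-hand side is at most
\[
\sum_{j\ge 0} 2^{-jM}\,2^{jN}\,|L\cap\Nc_2(2K)| \lesssim |L\cap \Nc_2(2K)|
\]
once $M>N$. The same differencing trick also handles Step 1 without needing to choose $\xi_0$ carefully.
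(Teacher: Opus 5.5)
Your proposal is correct and is essentially the paper's argument. You split into dyadic shells in $\dist(\xi,K+y)$, whereas the paper writes $\phi_1^2$ as a layer-cake integral over its superlevel sets $\{\phi_1^2\ge t\}\subseteq \B_{C_Mt^{-1/M}}$; the key step in both is the same: cover $\Nc_r(K+y)$ by $O(r^N)$ translates of $\Nc_1(K)$, then difference to bound each translate's lattice count by $|R^{1/N}\Lambda^\vee\cap \Nc_2(2K)|$. The paper gets this covering more directly by covering $\B_r$ with unit balls and adding $K$, so your rescaling step $2K+\B_{2^{j+1}}\subseteq 2^{j+1}(K+\B_1)$ is valid but unnecessary.
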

\begin{proof}
    We decompose $\phi_1^2(x) = \int_0^A \ind_{S_t}(x)\dt$, where $S_t = \{y : \phi_1^2(y) \ge t\}$. Then
    \begin{align*}
        \sum_{\xi \in R^{1/N}\Lambda^\vee}\int_{K+y} \phi_1^2(x-\xi)\dx &= \int_0^A \sum_{\xi \in R^{1/N}\Lambda^\vee} \int_{K+y} \ind_{S_t}(x-\xi)\dx\dt \\
        &\le \int_0^A \sum_{\xi \in R^{1/N}\Lambda^\vee} \int_{K+y}\ind_{\xi + \B_{C_Mt^{-1/M}}}(x)\dx\dt \\
        &\lesssim_M \int_0^A \sum_{\xi \in R^{1/N}\Lambda^\vee} t^{-n/M} \ind_{\xi \in \Nc_{C_Mt^{-1/M}}(K+y)}\dt \\
        &= \int_0^A t^{-n/M}\left|R^{1/N}\Lambda^\vee \cap \Nc_{C_Mt^{-1/M}}(K+y)\right|\dt.
    \end{align*}
    Now, suppose $\xi_1,...,\xi_s \in \Nc_{a}(K+y) \cap L$ for some lattice $L$. Then $\zeta_1,...,\zeta_s \in \Nc_{2a}(2K)$, where $\zeta_i = \xi_i - \xi_1$. Since we can cover $\Nc_{C_Mt^{-1/M}}(K+y)$ by $\lesssim_M t^{-N/M}$ translates of $\Nc_1(K)$, we obtain
    \begin{equation*}
        \left|R^{1/N}\Lambda^\vee \cap \Nc_{C_Mt^{-1/M}}(K+y)\right| \lesssim_M t^{-N/M}\left|R^{1/N}\Lambda^\vee \cap \Nc_{2}(2K)\right|.
    \end{equation*}
    Therefore, we obtain
    \begin{equation*}
        \int_0^A t^{-n/M}\left|R^{1/N}\Lambda^\vee \cap \Nc_{C_Mt^{-1/M}}(K+y)\right|\dt \lesssim_M \left|R^{1/N}\Lambda^\vee \cap \Nc_{2}(2K)\right|\int_0^A t^{-(N+n)/M}\dt,
    \end{equation*}
    and so taking $M > N+n$ yields the desired inequality.
\end{proof}

The second lemma will bound the off-diagonal terms.

\begin{lemma}\label{lemma:offdiagonalestimate}
    With probability at least $\frac{997}{1000}$, we have
    \begin{equation*}
        \sum_{\substack{\zeta \in R^{1/N}\Lambda^\vee \\ \zeta \ne 0}}  \ \sum_{\xi \in R^{1/N}\Lambda^\vee} \phi_1(x-\xi)\phi_1(x-\xi-\zeta) \lesssim R^{-1},
    \end{equation*}
    independent of $x \in \R^N$.
\end{lemma}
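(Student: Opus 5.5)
The plan is to reduce the estimate to a lower bound on the systole of the dual lattice $L := R^{1/N}\Lambda^\vee$, combined with the rapid decay of $\phi_1 = \hat{b}_1$. Since $b_1 \in C_c^\infty$, $\phi_1$ is Schwartz, so for every $M$ we have $|\phi_1(y)| \lesssim_M (1+|y|)^{-M}$.

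\textbf{Step 1: a good event of probability at least $\frac{997}{1000}$.} The map $\Lambda \mapsto \Lambda^\vee$ preserves $\mu$, since it is induced by the automorphism $g \mapsto {}^tg^{-1}$ of $G$, which fixes $\Gamma$. Choose a small constant $c>0$ such that $\mu(\lattice^{\thin,c}) \le \frac{3}{1000}$; this is possible by Theorem \ref{thm:cuspvolume}. Outside this thin set we have $\alpha_1(\Lambda^\vee) \le \alpha(\Lambda^\vee) \le c^{-1}$, so every nonzero vector of $\Lambda^\vee$ has length at least $c$. Consequently every nonzero $\zeta \in L$ satisfies $|\zeta| \ge cR^{1/N}$, and we may take $R^{1/N} \ge c^{-1}$, so the minimal length is at least $1$. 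All further estimates are deterministic on this event.

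\textbf{Step 2: the inner sum over $\xi$.} Fix $\zeta \ne 0$ and set $a = x - \xi$. By the triangle inequality, $\max(|a|,|a-\zeta|) \ge |\zeta|/2$. Splitting the decay exponent between the two factors gives
\begin{equation*}
    |\phi_1(a)\phi_1(a-\zeta)| \lesssim_M (1+|\zeta|)^{-M}\left[(1+|a|)^{-M} + (1+|a-\zeta|)^{-M}\right].
\end{equation*}
Now sum over $\xi \in L$. The systole of $L$ is at least $1$, so by packing (as in Lemma \ref{lemma:sysbounds}) every ball of radius $r \ge 1$ contains $\lesssim r^N$ points of $L$. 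Hence $\sum_{\xi \in L}(1+|x-\xi|)^{-M} \lesssim_M 1$ uniformly in $x$ once $M > N$, and the same holds for the shifted sum. It follows that
\begin{equation*}
    \sum_{\xi \in L}\phi_1(x-\xi)\phi_1(x-\xi-\zeta) \lesssim_M (1+|\zeta|)^{-M}.
\end{equation*}

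\textbf{Step 3: the outer sum over $\zeta$.} Decompose $L \setminus \{0\}$ into dyadic shells $2^j cR^{1/N} \le |\zeta| < 2^{j+1}cR^{1/N}$ for $j \ge 0$. Rescaling by $(cR^{1/N})^{-1}$ turns $L$ into a lattice with systole at least $1$, so the packing bound shows the $j$-th shell contains $\lesssim 2^{jN}$ points. Therefore
\begin{equation*}
    \sum_{\zeta \ne 0}(1+|\zeta|)^{-M} \lesssim \sum_{j \ge 0} 2^{jN}(2^j cR^{1/N})^{-M} \lesssim_{M,c} R^{-M/N}.
\end{equation*}
Taking $M = 2N$, say, gives a bound $\lesssim R^{-2} \le R^{-1}$, uniformly in $x$.

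The only genuinely probabilistic input is Step 1, that the dual lattice avoids the cusp with probability close to $1$. I expect the main care to be needed in Step 2: the packing estimate must be applied to $L$ itself rather than to $\Lambda$, and the bounds must be uniform in $x$. Both points are handled by the lower bound $|\zeta| \ge cR^{1/N} \ge 1$ for nonzero $\zeta \in L$.
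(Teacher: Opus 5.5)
Your proof is correct, but it takes a different route from the paper's.

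\textbf{The paper's route.} The paper splits the inner sum according to whether $|x-\xi|\le|\zeta|/2$.
\begin{itemize}
\item The near range is controlled by packing, on the event that $\Lambda^\vee$ has systole $\gtrsim 1$. This is what its condition ``$\alpha_1(\Lambda)\gtrsim 1$'' must mean; you make explicit the needed measure-preservation of $\Lambda\mapsto\Lambda^\vee$.
\item In the far range, the paper spends all of the decay of $\phi_1(x-\xi)$ on the factor $(1+|\zeta|)^{-M}$. It must then handle the leftover sum $\sum_\xi\phi_1(x-\xi-\zeta)$ by Poisson summation. This produces $R^{-1}|\Lambda\cap\B_{R^{1/N}}|$ and costs a Markov step.
\item The outer sum over $\zeta$ is bounded only on average, by Siegel's mean value theorem (expectation $\lesssim R^{-1}$) and another Markov step.
\end{itemize}
In total, three exceptional sets are discarded.

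\textbf{Your route.} You split the Schwartz decay symmetrically via $\max(|a|,|a-\zeta|)\ge|\zeta|/2$. This keeps a factor $(1+|a|)^{-M}$ or $(1+|a-\zeta|)^{-M}$ in each term, so the inner sum converges by packing alone and no Poisson summation is needed. You then bound the outer sum deterministically by dyadic packing, using $|\zeta|\ge cR^{1/N}$ for all nonzero $\zeta$. This needs a single exceptional set and no Siegel transform. It also gives the much stronger bound $O_M(R^{-M/N})$.

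\textbf{Why the bounds differ.} The paper's $R^{-1}$ is the true expectation of $\sum_{\xi\in\Lambda^\vee\setminus\{0\}}(1+R^{1/N}|\xi|)^{-M+N}$. That expectation is dominated by the deep cusp: lattices whose dual has a vector of length $\lesssim R^{-1/N}$, an event of probability $\asymp R^{-1}$. The paper has already restricted to the thick part for its near-range term, so its averaging step buys nothing extra. Your pointwise argument on the thick part is the more efficient one.

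A minor remark: to choose $c$ you do not need Theorem~\ref{thm:cuspvolume}, which is stated for $N\ge 3$. Mahler compactness and continuity of $\mu$ already give $\mu(\{\alpha>c^{-1}\})\to 0$ as $c\to 0$.
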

\begin{proof}
    To start, we fix $\zeta \in R^{1/N}\Lambda^\vee \setminus \{0\}$ and define
    \begin{equation*}
        F_\zeta(x) = \sum_{\xi \in R^{1/N}\Lambda^\vee} \phi_1(x-\xi)\phi_1(x-\xi-\zeta).
    \end{equation*}
    We split this sum into two regimes, depending on whether $|x-\xi|$ is less than or greater than $|\zeta|/2$. Fix some $M \gg N$. The first contribution can be bounded by
    \begin{align*}
        \sum_{\substack{\xi \in R^{1/N}\Lambda^\vee \\ |x-\xi| \le |\zeta|/2}}(1+|x-\xi|)^{-M}(1+|x-\xi-\zeta|)^{-M} &\lesssim \left| R^{1/N}\Lambda^\vee \cap \B_{|\zeta|/2}(x) \right|(1+|\zeta|)^{-M} \\
        &\lesssim |\Lambda^\vee \cap \B_{R^{-1/N}|\zeta|/2}(R^{-1/N}x)|(1+|\zeta|)^{-M}.
    \end{align*}
    To bound the second contribution, we note
    \begin{align*}
        \sum_{\substack{\xi \in R^{1/N}\Lambda^\vee \\ |x-\xi| > |\zeta|/2}}(1+|x-\xi|)^{-M}\phi_1(x-\xi-\zeta)&\lesssim (1+|\zeta|)^{-M}\sum_{\xi \in R^{1/N}\Lambda^\vee}\phi_1(x-\xi-\zeta).
    \end{align*}
    Applying Poisson summation, we get
    \begin{align*}
        \sum_{\xi \in R^{1/N}\Lambda^\vee}\phi_1(x-\xi-\zeta) &\le R^{-1}\sum_{\vbf \in R^{-1/N}\Lambda} b_1(\vbf) \\
        &\lesssim R^{-1}|R^{-1/N}\Lambda \cap \B_1^N| \\
        &= R^{-1}|\Lambda \cap \B_{R^{1/N}}|.
    \end{align*}
    Thus, it suffices to bound
    \begin{equation*}
        (1+|\zeta|)^{-M}\left(|\Lambda^\vee \cap \B_{R^{-1/N}|\zeta|/2}(R^{-1/N}x)| + R^{-1}|\Lambda \cap \B_{R^{1/N}}| \right).
    \end{equation*}
    Let us pass to a $\frac{999}{1000}$-measure subset for which $\alpha_1(\Lambda) \gtrsim 1$. We may then apply Lemma \ref{lemma:sysbounds} to obtain
    \begin{equation*}
        |\Lambda^\vee \cap \B_{R^{-1/N}|\zeta|/2}(R^{-1/N}x)| \lesssim |\zeta|^N,
    \end{equation*}
    with the inequality holding uniformly in $R,\zeta$. Moreover, it is easy to check
    \begin{equation*}
        \E[|\Lambda \cap \B_{R^{1/N}}|] \lesssim R.
    \end{equation*}
    Therefore, by Mahler's compactness criterion and Markov's inequality we may pass to a $\frac{998}{1000}$-measure subset of lattices on which we have
    \begin{equation*}
        |\Lambda^\vee \cap \B_{R^{-1/N}|\zeta|/2}(R^{-1/N}x)| + R^{-1}|\Lambda \cap \B_{R^{1/N}}| \lesssim |\zeta|^N + 1,
    \end{equation*}
    so summing in $\zeta$ yields
    \begin{align*}
        \sum_{\substack{\zeta \in R^{1/N}\Lambda^\vee \\ \zeta \ne 0}} F_\zeta(x) &\lesssim \sum_{\substack{\zeta \in R^{1/N}\Lambda^\vee \\ \zeta \ne 0}}(1+|\zeta|)^{-M}(|\zeta|^N+1) \\
        &\lesssim\sum_{\substack{\zeta \in R^{1/N}\Lambda^\vee \\ \zeta \ne 0}} (1+|\zeta|)^{-M+N} \\
        &= \sum_{\xi \in \Lambda^\vee \setminus \{0\}} (1+R^{1/N}|\xi|)^{-M+N}.
    \end{align*}
    Note that this final sum is the (non-primitive) Siegel transform of $f(y) = (1+R^{1/N}|y|)^{-M+N}$, so
    \begin{equation*}
        \E\left(\, \sum_{\xi \in \Lambda^\vee \setminus \{0\}} (1+R^{1/N}|\xi|)^{-M+N} \right) = \int_{\R^N} (1+R^{1/N}|y|)^{-M+N} \dy \lesssim R^{-1},
    \end{equation*}
    and applying Markov's inequality we obtain the desired conclusion after restricting to a $\frac{997}{1000}$-measure subset.
\end{proof}

\subsubsection{First property}

In this section, we establish \ref{prop:mainproposition}\ref{weights'}\ref{propertyi}.

\begin{lemma}\label{lemma:propertyi}
    For all $R \ge 1$, we have
\begin{equation*}
    \P\left(\norm{w_R}_{L^1} \lesssim R^{n-1}\right) \ge \frac{996}{1000}. 
\end{equation*}
\end{lemma}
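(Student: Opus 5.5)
We need $\|w_R\|_{L^1} \lesssim R^{n-1}$, where
\[
w_R(x) = \Bigl|\bigl(\phi_1 * \textstyle\sum_{\xi} \delta_\xi\bigr)(\iota x)\Bigr|^2 b_R(\iota x)^2, \qquad \xi \in R^{1/N}\Lambda^\vee .
\]
The plan is to expand the square into diagonal and off-diagonal terms and treat each with Lemmas \ref{lemma:diagonalestimate} and \ref{lemma:offdiagonalestimate}. Since $b_R \lesssim \ind_{\B_{2R}}$, it suffices to bound the integral of $\bigl|\sum_\xi \phi_1(\iota x - \xi)\bigr|^2$ over $x \in \B_{2R}^n$. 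Writing $\xi' = \xi + \zeta$, this integrand equals
\[
\sum_\xi \phi_1^2(\iota x-\xi) \;+\; \sum_{\zeta \ne 0}\sum_\xi \phi_1(\iota x-\xi)\,\phi_1(\iota x-\xi-\zeta).
\]

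For the off-diagonal part, I would restrict to the event of probability $\ge \frac{997}{1000}$ from Lemma \ref{lemma:offdiagonalestimate}. On this event the inner double sum is $\lesssim R^{-1}$ pointwise in $\iota x$. Integrating over $\B_{2R}^n$ gives $\lesssim R^{-1} R^n = R^{n-1}$. If needed, I would handle signs by bounding $|\phi_1|$ by its Schwartz majorant, as that lemma's proof does.

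For the diagonal part, apply Lemma \ref{lemma:diagonalestimate} with $K = \iota\B_{2R}^n$, an $n$-dimensional symmetric convex body in $V$. This bounds the integral by $|R^{1/N}\Lambda^\vee \cap \Nc_2(\iota \B_{4R}^n)|$. Strictly, the lemma integrates over $K$ in $\R^N$ while we integrate over $x \in \R^n$, i.e. over the slice $K \subseteq V$; the same layer-cake proof gives this with the $n$-dimensional measure, and that is the form I intend to use.

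It then suffices to show that, with probability $\ge \frac{999}{1000}$,
\[
|\Lambda^\vee \cap R^{-1/N}\Nc_2(\iota\B_{4R}^n)| \lesssim R^{n-1}.
\]
The set $R^{-1/N}\Nc_2(\iota\B_{4R}^n)$ is contained in a convex symmetric cylinder of dimensions roughly $R^{1-1/N}$ in the $n$ directions of $V$ and $R^{-1/N}$ in the $N-n$ transverse directions. Its volume is
\[
\asymp R^{n(1-1/N)} \cdot R^{-(N-n)/N} = R^{n-1}.
\]
Since $\Lambda^\vee$ is $\mu$-distributed when $\Lambda$ is, Siegel's mean value theorem (applied to the non-primitive count: primitive points plus multiples, contributing a factor $\zeta(N)$, plus $1$ for the origin) gives expectation $\lesssim R^{n-1} + 1$. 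Markov's inequality then yields the bound off an event of small probability.

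A union bound across the diagonal and off-diagonal events, and against the exceptional sets already removed in Lemma \ref{lemma:offdiagonalestimate} (the $\alpha_1 \gtrsim 1$ condition), leaves probability at least $\frac{996}{1000}$.

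The main obstacle is the diagonal term: checking that Lemma \ref{lemma:diagonalestimate} applies in the sliced, $n$-dimensional form needed here, and that the counting region really has volume $\asymp R^{n-1}$ rather than picking up extra factors from the $\Nc_2$ thickening. The thickening is harmless because $R^{-1/N}\cdot 2 \lesssim R^{-1/N}$ in the transverse directions.
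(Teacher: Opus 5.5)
Your proposal is correct and matches the paper's proof: split $\norm{w_R}_{L^1}$ into a diagonal and an off-diagonal part. The diagonal part is bounded by Lemma~\ref{lemma:diagonalestimate} plus Markov's inequality, using $\Vol(R^{-1/N}\Nc_2(\iota\B_{4R}))\asymp R^{n-1}$ (failure probability $\frac{1}{1000}$), and the off-diagonal part by Lemma~\ref{lemma:offdiagonalestimate} (failure probability $\frac{3}{1000}$). Your extra remarks---that the integral in Lemma~\ref{lemma:diagonalestimate} is taken with respect to $n$-dimensional measure on the slice, that $\Lambda^\vee$ is again $\mu$-distributed, and that the non-primitive count has expectation $\Vol+1$---only make explicit what the paper leaves implicit; also, since $\phi_1=\hat b_1\ge 0$, no sign issue arises.
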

\begin{proof}
    We have
    \begin{align*}
        \norm{w_R}_{L^1} &\lesssim \int_{\iota\B_{2R}} \sum_{\xi,\zeta \in R^{1/N}\Lambda^\vee} \phi_1(x-\xi)\phi_1(x-\zeta)\dx \\
        &= \sum_{\xi \in R^{1/N}\Lambda^\vee} \int_{\iota \B_{2R}} \phi_1^2(x-\xi)\dx + \sum_{\substack{\zeta \in R^{1/N}\Lambda^\vee \\ \zeta \ne 0}} \ \sum_{\xi \in R^{1/N}\Lambda^\vee}\int_{\iota\B_{2R}}\phi_1(x-\xi)\phi_1(x-\xi - \zeta)\dx.
    \end{align*}
    To control the main term, we use Lemma \ref{lemma:diagonalestimate} to obtain\footnote{Note that we could alternatively control the main term using the John ellipse and an analogous argument to the proof of Theorem \ref{thm:cuspestimatefinitetimeprobupper}.}
    \begin{align*}
        \sum_{\xi \in R^{1/N}\Lambda^\vee} \int_{\iota \B_{2R}}\phi_1^2(x-\xi)\dx &\lesssim |R^{1/N}\Lambda^\vee \cap \Nc_2(\iota \B_{4R})| = |\Lambda^\vee \cap R^{-1/N}\Nc_2(\iota \B_{4R})|.
    \end{align*}
    Since $\Vol(R^{-1/N}\Nc_2(\iota \B_{4R})) \asymp R^{n-1}$, we may apply Markov's inequality to obtain that this term is $\lesssim R^{n-1}$ with probability at least $\frac{999}{1000}$. The off-diagonal term is $\lesssim R^{n-1}$ with probability at least $\frac{997}{1000}$ by Lemma \ref{lemma:offdiagonalestimate}.
\end{proof}

\subsubsection{Second property}

In this section we establish Proposition \ref{prop:mainproposition}\ref{weights'}\ref{propertyii} with error $\lesssim R^{\frac{n-1}{N}}$.

\begin{lemma}\label{lemma:propertyii}
    For all $R \gg 1$, we have
    \begin{equation*}
        \P\left( \norm{Xw_R}_{L^\infty} \lesssim R^{\frac{n-1}{N}} \right) \ge \frac{999}{1000}.
    \end{equation*}
\end{lemma}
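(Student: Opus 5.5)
We bound $Xw_R(\ell)$ for an arbitrary line $\ell \subseteq \R^n$ intersecting $\B_{2R}^n$ (otherwise $Xw_R(\ell)=0$ by the support of $b_R$), by expanding the square exactly as in the proof of Lemma \ref{lemma:propertyi}:
\begin{equation*}
    Xw_R(\ell) \lesssim \sum_{\xi \in R^{1/N}\Lambda^\vee} \int_{\iota(\ell \cap \B_{2R})} \phi_1^2(x-\xi)\dx + \sum_{\substack{\zeta \in R^{1/N}\Lambda^\vee \\ \zeta \ne 0}} \ \sum_{\xi \in R^{1/N}\Lambda^\vee}\int_{\iota(\ell\cap \B_{2R})}\phi_1(x-\xi)\phi_1(x-\xi-\zeta)\dx.
\end{equation*}
For the off-diagonal term, on the event of Lemma \ref{lemma:offdiagonalestimate} the integrand is $\lesssim R^{-1}$ pointwise, uniformly in $x$. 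The segment has length $\lesssim R$, so this term is $\lesssim 1$ uniformly in $\ell$.

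For the diagonal term, we apply Lemma \ref{lemma:diagonalestimate} with $K$ a centered segment of length $\asymp R$ parallel to $\iota\ell$, translated by $y$. (The lemma is stated for $n$-dimensional bodies, but the proof works verbatim for a $1$-dimensional one, with $t^{-1/M}$ in place of $t^{-n/M}$.) This bounds the diagonal term by
\begin{equation*}
    |R^{1/N}\Lambda^\vee \cap \Nc_2(2K)| = |\Lambda^\vee \cap R^{-1/N}\Nc_2(2K)|.
\end{equation*}
Now $R^{-1/N}\Nc_2(2K)$ is contained in a tube $T_\ell$ of dimensions $R^{1-1/N} \times R^{-1/N} \times \cdots \times R^{-1/N}$, whose long axis lies in the direction of $\ell \subseteq V = \iota\R^n$. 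Choose $t$ with $e^{t} = R^{1/N}$ and $\zbf = (N-1,-1,\dots,-1)$, so that $g^{-t}\B_1^N$ is exactly such a tube with long axis along $\ebf_1$. Rotating by $h \in H$ sweeps out all tubes whose long axis lies in $V$. Then by \eqref{eq:maxtoheight},
\begin{equation*}
    \sup_{\ell} |\Lambda^\vee \cap T_\ell| \lesssim \sup_{h\in H}|\Lambda^\vee \cap hg^{-t}\B_{O(1)}^N| \asymp \Max_\zbf^t(\Lambda^\vee).
\end{equation*}
Since $\Lambda \mapsto \Lambda^\vee$ preserves $\mu$, Theorem \ref{thm:cuspestimatefinitetimeprobupper} with $T$ a large constant gives $\Max_\zbf^t(\Lambda^\vee) \lesssim e^{\delta_\zbf t/N}$ with probability at least $\frac{999}{1000}$ (up to adjusting the constants). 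Here
\begin{equation*}
    \delta_\zbf = \sum_{1\le i<j\le n}(z_i-z_j) = (n-1)N,
\end{equation*}
since only pairs with $i=1$ contribute. Hence $e^{\delta_\zbf t/N} = R^{(n-1)/N}$, and combining the two terms yields $\norm{Xw_R}_{L^\infty} \lesssim R^{\frac{n-1}{N}} + 1$.

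The main obstacle is the probability bookkeeping: this event must be intersected with the event of Lemma \ref{lemma:offdiagonalestimate}, so as stated one obtains probability $\ge \frac{996}{1000}$ rather than $\frac{999}{1000}$. I would either accept this, or treat the off-diagonal sum along lines separately, bounding its expectation via Siegel's formula. A second point to check is that the $O(1)$-dilation constants from the tube containment and from the net in Lemma \ref{lemma:netsize} are absorbed, which follows because $\B_C^N$ and $\B_1^N$ give comparable counts by Corollary \ref{cor:height=count}.
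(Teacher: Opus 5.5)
Your proposal is correct and is essentially the paper's own proof. It uses the same diagonal/off-diagonal split and applies Lemma \ref{lemma:diagonalestimate} to a segment, reducing to counting dual lattice points in $R^{1-1/N}\times R^{-1/N}\times\cdots\times R^{-1/N}$ tubes with axis in $V$, before invoking Theorem \ref{thm:cuspestimatefinitetimeprobupper}. Your choice $\zbf=(N-1,-1,\dots,-1)$ with $e^t=R^{1/N}$ is just the paper's $\zbf=(1-1/N,-1/N,\dots,-1/N)$ with $e^t=R$, rescaled so that $t\zbf$ is unchanged. Your explicit appeal to the $\mu$-invariance of $\Lambda\mapsto\Lambda^\vee$ fills in a step the paper leaves implicit.

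The probability discrepancy you flag is also present in the paper's own argument, and it is harmless. Every probability in Lemma \ref{lemma:offdiagonalestimate} comes from Markov's inequality or Mahler compactness, so it can be pushed arbitrarily close to $1$ by enlarging the implied constants. In the final assembly that event is shared with Lemma \ref{lemma:propertyi} anyway.
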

\begin{proof}
    Fix a line $\ell \subseteq V$, and note
    \begin{align*}
        \int_\ell w_R  &\lesssim \int_{\ell \cap \B_{2R}} \sum_{\xi \in R^{1/N}\Lambda^\vee} \phi_1^2(x-\xi)\dx  \\
        &\quad \quad + \sum_{\substack{\zeta \in R^{1/N}\Lambda^\vee \\ \zeta \ne 0}} \ \sum_{\xi \in R^{1/N}\Lambda^\vee} \int_{\ell \cap \B_{2R}} \phi_1(x-\xi)\phi_1(x-\xi-\zeta)\dx,
    \end{align*}
    and the error term is $O(1)$ in view of Lemma \ref{lemma:offdiagonalestimate}.
    
    Let us now bound the main term. Again, we may apply Lemma \ref{lemma:diagonalestimate} to obtain
    \begin{equation*}
        \int_{\ell \cap \B_{2R}} \sum_{\xi \in R^{1/N}\Lambda^\vee} \phi_1^2(x-\xi)\dx \lesssim |R^{1/N}\Lambda^\vee \cap \Nc_{2}(\ell_0 \cap \B_{4R})| = |\Lambda^\vee \cap R^{-1/N}\Nc_2(\ell_0 \cap \B_{4R})|,
    \end{equation*}
    where $\ell_0 \in \Gr_1(V)$ is the line parallel to $\ell$.\footnote{We write $\Gr_1(V)$ for the projective space (or 1-Grassmannian) of $V$ as opposed to the more conventional $\P(V)$, so as to avoid conflicting notation.}  Therefore, up to a uniform error, it suffices to consider 
    \begin{equation*}
        \P \left( \sup_{\ell \in \Gr_1(V)} |\Lambda \cap T_\ell| \ge A \right),
    \end{equation*}
    where $T_\ell$ is the tube around $\ell$ of length $\asymp R^{1-1/N}$ and radius $\asymp R^{-1/N}$.

    Now, take $\zbf = (1-1/N,-1/N,\dots,-1/N) \in \afrak_+$, and set $R = e^t$. Note $T_\ell \approx hg^{-t}\B_1^N$ for some $h \in H = \SO(n) \times \Id_{N-n}$, where $S \approx S'$ means that $C^{-1}S \subseteq S' \subseteq CS$ for some $C = O(1)$. By Corollary \ref{cor:height=count}, we find
    \begin{equation*}
        \P \left( \sup_{\ell \in \Gr_1(V)} |\Lambda \cap T_\ell| \ge A \right) \le \P\left( \Max_\zbf^t(\Lambda) \gtrsim A \right).
    \end{equation*}
    Applying Theorem \ref{thm:cuspestimatefinitetimeprobupper}, we see that
    \begin{equation*}
        \P\left( \Max_\zbf^t(\Lambda) \ge Te^{\delta_\zbf t/N} \right) \lesssim T^{-N},
    \end{equation*}
    and so we have for all $t \ge 1$ that
    \begin{equation*}
        \P\left( \Max_\zbf^t(\Lambda) \lesssim e^{\delta_\zbf t/N} \right) \ge \frac{999}{1000}.
    \end{equation*}
    But now, we compute
    \begin{equation*}
        \delta_\zbf = \sum_{1 \le i < j \le n}(z_i - z_j) = (n-1)z_1 - \sum_{j=2}^n z_j = (n-1)(N-1)/N + (n-1)/N = n-1,
    \end{equation*}
    and so $e^{\delta_\zbf t/N} = R^{(n-1)/N}$, which finishes the proof.
\end{proof}

\begin{rmk}
    The bound $\lesssim R^{(n-1)/N}$ is optimal for our choice of weights; see Remark \ref{rmk:matchingbounds}.
\end{rmk}

\subsubsection{Third property}

\begin{lemma}\label{lemma:propertyiii}
    For all $R \gg 1$, we have
    \begin{equation*}
        \P\left( \norm{\hat{w}_R}_{L^2(\Sigma_\Lambda)}^2 \gtrsim \epsilon R^\frac{n-1}{n-1+k}R^{n-1} \right) \ge \frac{9}{1000}.
    \end{equation*}
\end{lemma}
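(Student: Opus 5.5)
The plan is to compute $\hat{w}_R$ explicitly on the frequency side, using the fact that every term is nonnegative. Then we show that each lattice point supplied by Lemma \ref{lemma:mainprobabilisticlemma} produces a bump of $\hat{w}_R$ on $\Sigma_\Lambda$ with height $\asymp R^{n-1}$ and width $\asymp R^{-1}$.

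\textbf{Step 1: a formula for $\hat{w}_R$.} Since $w_R = \hat{h}_R^2$ with $\hat{h}_R = \hat{H}_R \circ \iota$, restricting to $V$ on the physical side corresponds to pushing forward by $\pi$ on the frequency side. Hence
\begin{equation*}
    \hat{w}_R = \pi_*H_R * \pi_*H_R.
\end{equation*}
Here $H_R = R^{-1}\sum_{\ubf \in R^{-1/N}\Lambda} b_1(\ubf)\,\phi_R(\cdot-\ubf)$. The function $\phi_R = \hat{b}_R$ is nonnegative, has height $R^N$, and is essentially supported on $\B_{R^{-1}}$. Consequently $\pi_*(R^{-1}\phi_R)$ is a nonnegative bump on $\R^n$ of mass $R^{-1}$ and width $R^{-1}$, so its height is $\asymp R^{n-1}$. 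Convolving two such bumps gives a bump of height $\gtrsim R^{n-2}$ on a ball of radius $\gtrsim R^{-1}$. Expanding the convolution, and using $b_1 \ge 0$, $\phi_R \ge 0$ and $b_1 \ge 1$ on $\B_1$, gives the pointwise bound
\begin{equation*}
    \hat{w}_R(\xi) \gtrsim R^{n-2}\sum_{\ubf,\ubf' \in R^{-1/N}\Lambda \cap \B_1} \ind_{\B_{cR^{-1}}(\pi(\ubf+\ubf'))}(\xi).
\end{equation*}

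\textbf{Step 2: counting representations.} Work on the event $\Gc'_{\psi,\epsilon,R}$ of Lemma \ref{lemma:mainprobabilisticlemma}, which has measure $\ge \frac{1}{100} \ge \frac{9}{1000}$. It provides $\gtrsim \epsilon R^q$ points $\pi(\vbf)$ with $\vbf \in R^{-1/N}\Lambda \cap \B_{\rho_0}^N$. These points lie on $\Sigma_\Lambda$ and are $R^{-\beta}$-separated.

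Now fix $\rho_0 = \frac12$. For each such $\vbf$, every $\ubf \in R^{-1/N}\Lambda \cap \B_{1/2}$ yields a pair $(\ubf, \vbf-\ubf)$ with both entries in $\B_1$ and with $\pi(\ubf + (\vbf-\ubf)) = \pi(\vbf)$. By Theorem \ref{thm:VDC} applied to $\Lambda \cap \B_{R^{1/N}/2}$, there are $\gtrsim R$ such $\ubf$, deterministically in $\Lambda$. Therefore $\hat{w}_R \gtrsim R^{n-1}$ on $\B_{cR^{-1}}(\pi(\vbf))$.

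\textbf{Step 3: integrating over the surface.} Because $\beta < 1$, the balls $\B_{cR^{-1}}(\pi(\vbf))$ are disjoint. Each ball meets $\Sigma_\Lambda$, and also any $\Sigma_f$ with $\norm{f-(\psi+\eta_\Lambda)}_{C^k} \ll R^{-1}$, in a patch of surface measure $\gtrsim R^{-(n-1)}$. Since the surface passes within $\ll R^{-1}$ of $\pi(\vbf)$ and is $C^2$ with bounded curvature, a slightly shrunken ball still meets it in such a patch. Summing gives
\begin{equation*}
    \norm{\hat{w}_R}_{L^2(\Sigma_\Lambda)}^2 \gtrsim \epsilon R^q \cdot (R^{n-1})^2 \cdot R^{-(n-1)} = \epsilon R^{q}R^{n-1},
\end{equation*}
and $q = \frac{n-1}{n-1+k}$ because $m = n-1$.

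\textbf{Main obstacle.} The delicate point is Step 1. One must make sure that the Schwartz tails of $\phi_R$ and the pushforward do not spoil the lower bound. This is harmless only because $\hat{b}_1 \ge 0$ was built into the choice of $b_1$, so every cross term is nonnegative and we can discard all but the pairs found in Step 2. A second point to check is that the lower bound on the bump height holds on a ball of radius comparable to $R^{-1}$ around $\pi(\vbf)$, not merely at the point itself. I would handle this using that $\hat{b}_1$ is continuous and positive near $0$, so that $\phi_R \gtrsim R^N$ on $\B_{cR^{-1}}$.
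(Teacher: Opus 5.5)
Your proposal is correct and follows the paper's argument. Like the paper, you use the projection--slice formula $\hat w_R = \pi_*H_R * \pi_*H_R$ with all terms nonnegative, count $\gtrsim R$ representations of each good lattice point via Theorem~\ref{thm:VDC}, and integrate $\hat w_R \gtrsim R^{n-1}$ over disjoint $R^{-1}$-balls around the $\gtrsim \epsilon R^q$ separated points from Lemma~\ref{lemma:mainprobabilisticlemma}. The one difference is that you get the representation count deterministically, since every $\ubf \in R^{-1/N}\Lambda \cap \B_{1/2}$ pairs with $\vbf - \ubf$; this makes the paper's restriction to a $\frac{999}{1000}$-measure compact subset unnecessary and gives probability at least $\frac{1}{100}$.
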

\begin{proof}
    We follow the strategy of \cite{CaiZha}, this time without any power loss. By the projection-slice theorem, we see that\footnote{Here, as in \cite{CaiZha}, we abuse notation by writing $\phi_R$ for the appropriately scaled functions in both $n$ and $N$ dimensions.}
    \begin{equation*}
        h_R = R^{-1}\phi_R * \left( b_1\delta_{\pi(R^{-1/N} \Lambda)} \right),
    \end{equation*}
    and so
    \begin{equation*}
        \hat{w}_R = h_R * h_R = R^{-2}\phi_R * \phi_R * (b_1\delta_{\pi(R^{-1/N} \Lambda)}) * (b_1\delta_{\pi(R^{-1/N} \Lambda)}).
    \end{equation*}
    
    Let us introduce the temporary notation
    \begin{equation*}
        B_c(\Lambda) = R^{-1/N}\Lambda \cap \B_c^N.
    \end{equation*}
    By restricting to a $\frac{999}{1000}$-measure compact subset of $\lattice$ so that the geometry of $\Lambda$ is uniformly controlled, we can find some $0 < c < 1/2$ so that for each $\ubf \in B_c(\Lambda)$, one has $\gtrsim |B_c(\Lambda)|$ solutions $\vbf,\wbf \in B_{2c}(\Lambda)$ with $\vbf-\wbf = \ubf$. Using that $\phi_R * \phi_R \gtrsim R^n\ind_{\B_{R^{-1}}}$, we thus obtain
    \begin{equation*}
        \hat{w}_R \gtrsim R^{n-2}|B_c(\Lambda)|\ind_{\Nc_{R^{-1}}(\pi(B_c(\Lambda)))}.
    \end{equation*}
    Now, recall that $\Sigma_\Lambda$ is a random perturbation with good intersection properties with $\Lambda$, as in Lemma \ref{lemma:mainprobabilisticlemma}. Decompose
    \begin{align*}
        \norm{\hat{w}_R}_{L^2(\Sigma_\Lambda)}^2 &\gtrsim \int_{\Sigma_\Lambda \cap \Nc_{R^{-1}}(\pi B_c(\Lambda))}R^{2n-4}|B_c(\Lambda)|^2\dsigma \ge R^{2n-4}|B_c(\Lambda)|^2\sigma\left( \Sigma_\Lambda \cap \Nc_{R^{-1}}(\pi B_c(\Lambda))\right).
    \end{align*}
    Using Lemma \ref{lemma:mainprobabilisticlemma} for $\rho_0 = c$, we see that $|\Sigma_\Lambda \cap \pi B_{\rho_0}(\Lambda)| \gtrsim \epsilon R^q$ with probability at least $\frac{1}{100}$, and moreover that the points in this intersection are $R^{-1}$-separated. Thus, we have
    \begin{equation*}
        \sigma\left( \Sigma_\Lambda \cap \Nc_{R^{-1}}(\pi B_c(\Lambda))\right) \gtrsim \epsilon R^{\frac{n-1}{n-1+k}}R^{-n+1},
    \end{equation*}
    and so in total we have
    \begin{equation*}
        \norm{\hat{w}_R}_{L^2(\Sigma_\Lambda)}^2 \gtrsim \epsilon R^{\frac{n-1}{n-1+k}}R^{n-1}\left(R^{-2}|B_{\rho_0}(\Lambda)|^2\right).
    \end{equation*}
    Applying Theorem \ref{thm:VDC}, we see that $|B_{\rho_0}(\Lambda)| \gtrsim R$, which completes the bound.

    Finally, we note that whenever $\norm{f - (\psi + \eta_\Lambda)}_{C^k} \ll R^{-1}$, the same lower bound holds for $\hat{w}_R$ integrated over $\Sigma_f$, since we integrate over an $R^{-1}$-neighborhood of $\Sigma_\Lambda$.
\end{proof}

\subsection{Proof of Theorem \ref{thm:LocalMTFailsalways}}

Finally, we are able to complete the proof of power loss for local Mizohata-Takeuchi.

\begin{proof}[Proof of Theorem \ref{thm:LocalMTFailsalways}]
    By combining Lemmas \ref{lemma:propertyi}, \ref{lemma:propertyii}, \ref{lemma:propertyiii}, we see that we can find for each $R \ge C_0 \rho_0^{-(N-n)/q}\epsilon^{-1/q}$ a set of lattices in $\Gc_{\psi,\epsilon,R} \subseteq \Gc_{\psi,\epsilon,R}'$ of measure
    \begin{equation*}
        \mu(\Gc_{\psi,\epsilon,R}) \ge \frac{9}{1000} - \frac{4}{1000} - \frac{1}{1000} \ge \frac{1}{1000}
    \end{equation*}
    for which all three portions of Proposition \ref{prop:mainproposition}\ref{weights'} hold, which completes the proof. In view of Lemma \ref{lemma:errorreduction}, this implies Theorem \ref{thm:LocalMTFailsalways}.
\end{proof}

%% file: Sections/7-genericity.tex
\section{Genericity of power loss}\label{section:genericity}

In this section, we prove Theorem \ref{thm:localMTgenericallyfails}. To this end, fix some $\delta > 0$, and choose $N \ge 2n\delta^{-1}$. Let us also write $\Psi \subseteq C^k([0,1]^{n-1})$ for the unit ball in the $C^k$-norm.

\begin{proof}[Proof of Theorem \ref{thm:localMTgenericallyfails}]
    Suppose that $0 < \epsilon < 1$, and $R \gg \epsilon^{-1/q}$. By Proposition \ref{prop:mainproposition}, there exists some open subset $V_{\epsilon,R} \subseteq \Psi$ which contains the $\gtrsim R^{-1}$-neighborhoods of a $\lesssim \epsilon$-dense subset in $\Psi$, so that for every $\psi \in V_{\epsilon,R}$ there exists a weight $w \colon \B_R \to [0,\infty)$ for which
    \begin{equation*}
        \norm{\hat{w}}_{L^2(\Sigma_\psi)}^2 \gtrsim \epsilon R^qR^{-\frac{n-1}{N}}\norm{w}_{L^1}\norm{Xw}_{L^\infty}.
    \end{equation*}

    Now, let $(\epsilon_k)_{k \in \N}$ be a sequence tending to zero, and suppose $R_k \gg \epsilon_k^{-A}$ for some $A > 2\delta^{-1}$. Define the set
    \begin{equation*}
        V_K = \bigcup_{k \ge K} V_{\epsilon_k,R_k}.
    \end{equation*}
    Then $V_K$ is clearly open and dense. By the Baire category theorem, the set
    \begin{equation*}
        V = \bigcap_{K \ge 0}V_K = \limsup_{k \to \infty} V_{\epsilon_k,R_k}
    \end{equation*}
    is a dense $G_\delta$. Moreover, if $\psi \in V$, then there exist arbitrarily large $R_k \ge 1$ for which we can find a weight $w$ on $\B_{R_k}$ so that
    \begin{equation*}
        \norm{\hat{w}}_{L^2(\Sigma_\psi)}^2 \gtrsim R_k^qR_k^{-1/A}R_k^{-\frac{n-1}{N}}\norm{w}_{L^1}\norm{Xw}_{L^\infty} \ge R_k^q R_k^{-\delta}\norm{w}_{L^1}\norm{Xw}_{L^\infty}.
    \end{equation*}
\end{proof}

\begin{rmk}
    Just like in Theorem \ref{thm:LocalMTFailsalways}, one can obtain a quantitative bound on the error rate $\lesssim_\delta R^\delta$; we have made no effort to do so here.
\end{rmk}

%% file: Sections/8-bookkeeping.tex
\section{Bookkeeping and explicit error bounds}\label{section:bookkeeping}

In this section, we consider the explicit nature of all dependencies on $N$ to obtain the error term stated in Theorem \ref{thm:LocalMTFailsalways}. We frequently have dependencies which are (at worst) factorial in $N$, meaning $A \lesssim N^{KN} \cdot B$ for some $K > 0$ and with the implicit constant now independent of $N$. In particular, this holds for all bounds used to prove Lemmas \ref{lemma:propertyi} and \ref{lemma:propertyiii}.

Establishing Lemma \ref{lemma:propertyii}, on the other hand, requires a bit more care (the asymptotics are still of shape $N^{O(N)}$). We state the following quantitative analogues of our qualitative results.

\begin{thm}[Quantitative Analogue of Theorem \ref{thm:cuspvolume}]\label{thm:cuspvolumequant}
    For all $N \ge 2$ and all $0 < \epsilon < 1$, we have
    \begin{equation*}
        \mu\left( \lattice^{\thin,\epsilon} \right) \asymp \frac{\pi^{N/2}}{N \cdot \Gamma(N/2) \zeta(N)}\epsilon^N,
    \end{equation*}
    with the implicit constant independent of $N,\epsilon$.
\end{thm}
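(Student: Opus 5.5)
The plan is to prove the upper and lower bounds separately, tracking every $N$-dependence. The key observation is that $\frac{\pi^{N/2}}{N\Gamma(N/2)\zeta(N)}\epsilon^N = \frac{\omega_N}{2\zeta(N)}\epsilon^N$, since $\omega_N = \frac{\pi^{N/2}}{\Gamma(N/2+1)} = \frac{2\pi^{N/2}}{N\Gamma(N/2)}$. This is exactly half the expected number of primitive vectors in $\B_\epsilon^N$ under Siegel's mean value theorem (Theorem \ref{thm:Siegelsformula}). So I would compare $\lattice^{\thin,\epsilon}$ with the event that $\Lambda$ has a short primitive vector, i.e. $\alpha_1(\Lambda) > \epsilon^{-1}$.

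\textbf{Lower bound.} Let $X = \hat{\ind}_{\B_\epsilon}^{\SV}(\Lambda)$, which counts primitive vectors in $\B_\epsilon^N$. Since primitive vectors come in pairs $\pm\vbf$, $X$ is even, so $\P(X>0) \ge \E[X] - \E[X\ind_{X \ge 4}]$.
\begin{itemize}
\item[(a)] For $N\ge3$, I would use Rogers' formula (Theorem \ref{thm:Rogersformula}) via second-moment or Bonferroni-type inequalities. Counting pairs of primitive vectors up to sign, $\P(X>0) \ge \frac12\E[X] - \frac18\E[X(X-2)]$.
\item[(b)] Rogers' formula gives $\E[X(X-2)] = \E[X]^2$ up to the diagonal terms. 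With $\E[X] = \frac{\omega_N\epsilon^N}{\zeta(N)} \le \omega_N \ll 1$, this yields $\P(\alpha_1 > \epsilon^{-1}) \ge \frac{\omega_N}{2\zeta(N)}\epsilon^N(1 - O(\epsilon^N\omega_N))$.
\item[(c)] For $N=2$, I would use an explicit fundamental domain computation instead.
\end{itemize}

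\textbf{Upper bound.} I would use the union bound $\mu(\lattice^{\thin,\epsilon}) \le \sum_{p=1}^{N-1}\mu(\alpha_p > \epsilon^{-1})$.
\begin{itemize}
\item[(a)] The $p=1$ term is at most $\P(X \ge 2) \le \frac12\E[X]$, which is the main term.
\item[(b)] The $p = N-1$ term equals the $p=1$ term under duality, since $\alpha_{N-1}(\Lambda) = \alpha_1(\Lambda^\vee)$. This threatens a factor of 2, but a factor of 2 is absorbed by $\asymp$.
\item[(c)] For $2 \le p \le N-2$, I would apply a Siegel-type mean value formula for primitive $p$-sublattices (Schmidt/Rogers). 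It gives the expected number of primitive $p$-sublattices of covolume $<\epsilon$ as $c_{N,p}\epsilon^N$, where $c_{N,p} = \binom{N}{p}\frac{\omega_N\cdots\omega_{N-p+1}}{\omega_1\cdots\omega_p}\frac{\zeta(2)\cdots\zeta(p)}{\zeta(N)\cdots\zeta(N-p+1)}\cdot\frac1N$, up to normalization. The task is then to show $\sum_{2\le p\le N-2} c_{N,p} \lesssim \omega_N/\zeta(N)$ uniformly in $N$.
\end{itemize}

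\textbf{Main obstacle.} The hard step is (c) of the upper bound: bounding the middle-dimensional counts uniformly in $N$. I would handle it with Stirling's formula applied to the ratio $c_{N,p}/c_{N,1}$, aiming to show it decays geometrically in $\min(p, N-p)$, at least for $\epsilon<1$, using $\omega_j \to 0$ superexponentially. If this fails for small $N$, those finitely many $N$ are covered by Theorem \ref{thm:cuspvolume}, since the bound only needs constants independent of $N$ for large $N$.
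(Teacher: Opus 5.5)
Your proposal is correct and is essentially the paper's argument. The paper bounds each $p$-cusp by the expected number of primitive rank-$p$ sublattices of covolume at most $\epsilon$, which it computes by unfolding to be $C_{p,N-p}\epsilon^N$ with $C_{p,q}=c_pc_q/(Nc_N)$; via $\xi(\ell)=2\zeta(\ell)/(\ell\omega_\ell)$ this is exactly your $c_{N,p}$, and its symmetry in $p\leftrightarrow N-p$ does your duality step. It then dismisses the middle $p$ by the growth of $\xi$, and gets the lower bound from Rogers' second moment on primitive vectors for $N\ge3$ and the modular-surface computation for $N=2$.

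There are two small slips, both easily fixed. First, your lead-in inequality $\P(X>0)\ge\E[X]-\E[X\ind_{X\ge4}]$ is missing a factor $\frac12$, though your Bonferroni bound in (a) is correct. Second, $\omega_N\ll1$ fails for small $N$: for $4\le N\le 8$ and $\epsilon$ near $1$ one has $\E[X]>4$, so $\frac12\E[X]-\frac18\E[X]^2$ is vacuous. Use $\P(X>0)\ge\E[X]^2/\E[X^2]=\E[X]/(\E[X]+2)$ as the paper does, or your small-$N$ fallback.
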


The proof of Theorem \ref{thm:cuspvolumequant} is established in Appendix \ref{section:cuspvolumes}; note that the more standard argument involving Siegel sets given in the proof of Theorem \ref{thm:cuspvolume} yields a bound of shape $\lesssim C^{N^3}\epsilon^N$ for some absolute $C > 1$ by Remark \ref{rmk:suboptimalcuspbound}; this would lead to an error bound of shape $\exp(O(\log^{2/3}R))$ in Theorem \ref{thm:LocalMTFailsalways}.

\begin{prop}[Quantitative Analogue of Corollary \ref{cor:height=count}]\label{prop:quantheightcount}
    For all $\Lambda \in \lattice$, we have
    \begin{equation*}
        \frac{1}{6^NN!}|\B_1^N \cap \Lambda| \le \alpha(\Lambda) \le 2^N|\B_1^N \cap \Lambda|.
    \end{equation*}
\end{prop}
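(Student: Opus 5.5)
The plan is to redo the proofs of Propositions \ref{prop:counttocusp} and \ref{prop:cusptocount}, keeping every constant explicit instead of hiding it in Minkowski's second theorem, Henk's bound, or van der Corput. Both inequalities come from comparing $|\Lambda \cap \B_1^N|$ with the covolume of a well-chosen sublattice $\Delta \le \Lambda$.

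\emph{Upper bound} $\alpha(\Lambda) \le 2^N|\B_1^N \cap \Lambda|$. Let $\Delta$ be a $d$-dimensional sublattice with $\Covol(\Delta)^{-1} = \alpha(\Lambda)$, and set $\B_1^d = \B_1^N \cap \Span_\R(\Delta)$.
\begin{itemize}
\item We would like to apply Minkowski's first theorem in the form of Blichfeldt/van der Corput with explicit constant. A convex symmetric body $K \subseteq \R^d$ satisfies $|\Delta \cap K| \ge 2^{-d}\Vol(K)/\Covol(\Delta)$, or more precisely $\ge \lfloor 2^{-d}\Vol(K)/\Covol(\Delta)\rfloor$ after pairing $\pm$ points.
\item If $\Vol(\B_1^d)$ were at least $1$, this would immediately give $|\Lambda \cap \B_1^N| \ge |\Delta \cap \B_1^d| \gtrsim 2^{-d}\alpha(\Lambda)$.
\item However, $\omega_d$ decays superexponentially, so applying this to $\B_1^d$ directly loses a factor $\omega_d^{-1} \approx d^{d/2}$. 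That is not $2^N$.
\item To remove this loss, we use the maximality of $\Delta$. Any sublattice $\Delta' \le \Delta$ has $\Covol(\Delta') \ge \Covol(\Delta)$, and every sublattice of every dimension has covolume at least $\alpha(\Lambda)^{-1}$. Hence the successive minima of $\Delta$ with respect to the cube $[-1,1]^d$ (volume $2^d$) satisfy $\lambda_1 \cdots \lambda_j \gtrsim \Covol(\Delta)$ up to $j$-dependent constants.
\item Counting lattice points in a cube instead of a ball, or using $\B_1^d \supseteq d^{-1/2}[-1,1]^d$ together with a reduced basis in which $\Delta$ is nearly orthogonal, should produce $2^{-N}\alpha(\Lambda)$ points.
\end{itemize}
Honestly, my first attempt would be: $|\Delta \cap \B_1^d| \ge 2^{-d}\Vol(\B_1^d)\alpha(\Lambda) - O(1)$, and then check whether the stated constant tolerates $\omega_d$. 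If it does not, we switch to the cube argument above.

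\emph{Lower bound} $\alpha(\Lambda) \ge (6^N N!)^{-1}|\B_1^N \cap \Lambda|$. Let $\Delta$ be spanned by $\Lambda \cap \B_1^N$ and $d = \dim \Delta$. Since $\alpha(\Lambda) \ge \Covol(\Delta)^{-1}$, it suffices to show $|\Delta \cap \B_1^d| \le 6^d d!\,\Covol(\Delta)^{-1}$.
\begin{itemize}
\item Choose linearly independent $v_1, \dots, v_d \in \Delta \cap \B_1^d$ realizing the successive minima $\lambda_i \le 1$.
\item Henk's explicit bound gives $|\Delta \cap K| \le 2^{d-1}\prod_i \lfloor 2/\lambda_i + 1\rfloor \le 2^{d-1}\prod_i (3/\lambda_i)$, since $\lambda_i \le 1$. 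This is at most $6^d \prod_i \lambda_i^{-1}$.
\item The explicit Minkowski second theorem lower bound $\prod_i \lambda_i \,\Vol(\B_1^d) \ge 2^d\Covol(\Delta)/d!$ then yields $|\Delta \cap \B_1^d| \le 6^d\, d!\,\Vol(\B_1^d)\,2^{-d}/\Covol(\Delta)$. Since $\Vol(\B_1^d) \le 2^d$, this is bounded by $6^d d!/\Covol(\Delta)$.
\item Alternatively, we can avoid Minkowski's theorem: the $v_i$ span a sublattice of index at most $d!$ relative to nothing in particular, so $\Covol(\Delta) \le \prod |v_i| \le \prod \lambda_i$ by Hadamard's inequality. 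This is even simpler, and gives $|\Delta \cap \B_1^d| \le 6^d/\Covol(\Delta)$.
\end{itemize}

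The main obstacle is the upper bound: controlling the volume loss $\omega_d^{-1}$ of small balls without paying superexponentially. This is where maximality of $\Delta$ has to be exploited carefully.
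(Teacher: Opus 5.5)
\textbf{Lower bound.} Your argument for $\alpha(\Lambda)\ge(6^NN!)^{-1}|\B_1^N\cap\Lambda|$ is correct and is exactly the paper's: Henk's bound $|\Delta\cap\B_1^d|\le 2^{d-1}\prod_i(2/\lambda_i+1)\le 6^d\prod_i\lambda_i^{-1}$, then the lower half of Minkowski's second theorem, then $\Vol(\B_1^d)\le 2^d$. Your Hadamard variant is also valid once you drop the garbled remark about ``index at most $d!$''. If $v_1,\dots,v_d$ realize the minima, then $\Covol(\Delta)\le\Covol(\Z v_1+\cdots+\Z v_d)\le\prod_i|v_i|=\prod_i\lambda_i$, which improves the constant to $6^N$. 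In both versions, when $d=N$ use $\Covol(\Delta)^{-1}\le 1<\alpha_1(\Lambda)$ instead, since $\alpha$ only ranges over $p\le N-1$.

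\textbf{Upper bound: a gap that cannot be filled.} Your diagnosis is right. Van der Corput on $\B_1^d$ only gives $|\Delta\cap\B_1^d|\ge\omega_d2^{-d}\Covol(\Delta)^{-1}$, and $\omega_d<1$ for $d\ge 13$. This is exactly the step the paper's proof glosses over when it writes $\Vol(\B_1^d)/(2^d\Covol(\Delta))\ge 2^{-d}\alpha(\Lambda)$. Your proposed repair does not recover the lost factor. The inscribed cube $d^{-1/2}[-1,1]^d$ has volume $(2/\sqrt d)^d\le\omega_d$. Counting in $[-1,1]^d$ is not the quantity in the statement. Maximality of $\Delta$ only gives lower bounds on covolumes of its sublattices, which cannot force points into the ball.

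In fact no argument can work, because $\alpha(\Lambda)\le 2^N|\B_1^N\cap\Lambda|$ is false for large $N$. Here is a counterexample.
\begin{itemize}
\item By Siegel's mean value theorem (Minkowski--Hlawka), there is a unimodular $\Lambda_0\subseteq\R^{N-1}$ with no nonzero vector of length $<r$, where $\omega_{N-1}r^{N-1}=\frac12$.
\item Then $\Delta=r^{-1}\Lambda_0$ has covolume $2\omega_{N-1}$ and meets the open unit ball only at $0$.
\item Set $\Lambda=\Delta\oplus(2\omega_{N-1})^{-1}\Z\subseteq\R^{N-1}\times\R$. It is unimodular, and once $2\omega_{N-1}<1$ we have $\Lambda\cap\B_1^N=\{0\}$.
\item On the other hand, $\alpha(\Lambda)\ge\Covol(\Delta)^{-1}=(2\omega_{N-1})^{-1}$.
\item This exceeds $2^N$ whenever $2^{N+1}\omega_{N-1}<1$, which holds for all $N\ge 70$ (recall $\omega_d\approx(2\pi e/d)^{d/2}/\sqrt{\pi d}$).
\end{itemize}

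\textbf{What is actually true.} The van der Corput argument honestly gives $\alpha(\Lambda)\le\max_{1\le d\le N-1}(2^d/\omega_d)\,|\B_1^N\cap\Lambda|$, with a constant of size $N^{O(N)}$. That is the correct form of the upper bound. It is harmless for the paper, since Lemma~\ref{lemma:quantativeprobbound} only uses the lower bound on $\alpha$.
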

\begin{proof}
    To start, let $\Delta \subseteq \Lambda$ be the lattice spanned by $\Lambda \cap \B_1^N$, and let $d = \dim \Delta$. The quantitative version of Theorem \ref{thm:succminest} implies
    \begin{equation*}
        |\B_1^d \cap \Delta| \le 2^{d-1}\prod_{i=1}^d \left(\frac{2}{\lambda_i(\Delta)} + 1\right) \le 6^d \left(\,\prod_{i=1}^d \lambda_i(\Delta) \right)^{-1}.
    \end{equation*}
    Then Minkowski's second theorem implies
    \begin{equation*}
        \left(\,\prod_{i=1}^d \lambda_i(\Delta) \right)^{-1} \le \frac{d!}{2^d}\frac{\Vol(\B_1^d)}{\Covol(\Delta)} \le d!\cdot \alpha(\Delta).
    \end{equation*}
    Thus, in total, we have
    \begin{equation*}
        \alpha(\Lambda) \ge \frac{1}{6^NN!}|\B_1^d \cap \Delta|.
    \end{equation*}

    In the other direction, let us assume the supremum in $\alpha(\Lambda)$ is realized by a $d$-dimensional sublattice $\Delta \subseteq \Lambda$. By Theorem \ref{thm:VDC}, we have
    \begin{equation*}
        |\B_1^d \cap \Delta| \ge \frac{\Vol(\B_1^d)}{2^d\Covol(\Delta)} \ge 2^{-d}\alpha(\Lambda).
    \end{equation*}
    Thus we obtain
    \begin{equation*}
        \alpha(\Lambda) \le 2^N|\B_1^N \cap \Lambda|.
    \end{equation*}
\end{proof}

We also need a quantitative version of Lemma \ref{lemma:propertyii}.

\begin{lemma}\label{lemma:quantativeprobbound}
    There exists some $C_0 > 1$, independent of $N$, so that for all $R \gg 1$ we have
    \begin{equation*}
        \P\left( \norm{Xw}_{L^\infty} \le C_0600^NN^{2N}R^{\frac{n-1}{N}} \right) \ge \frac{999}{1000}.
    \end{equation*}
\end{lemma}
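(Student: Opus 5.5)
The plan is to repeat the proof of Lemma \ref{lemma:propertyii}, tracking every constant that depends on $N$. There are four inputs: the off-diagonal bound (Lemma \ref{lemma:offdiagonalestimate}), the diagonal bound (Lemma \ref{lemma:diagonalestimate}), the comparison between tubes and the maximal height (Proposition \ref{prop:quantheightcount}), and the cusp estimate (Theorem \ref{thm:cuspestimatefinitetimeprobupper}, now fed by Theorem \ref{thm:cuspvolumequant}).

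For a fixed line $\ell \subseteq V$, I split $\int_\ell w_R$ into diagonal and off-diagonal parts, as before. For the off-diagonal part I rerun Lemma \ref{lemma:offdiagonalestimate} with $M = 2N+1$. The Schwartz tails of $\phi_1$, the packing constant in Lemma \ref{lemma:sysbounds}, and the Siegel integral $\int(1+R^{1/N}|y|)^{-M+N}\dy$ each cost at most $N^{O(N)}$. Since this term carries a factor $R^{-1}$ (integrated over a segment of length $R$), it is $O(1)$ up to such a factor, far below $R^{(n-1)/N}$. I will check that its constant has the form $C^N N^{2N}$, with the $N!$-type factor coming from the lattice-point bounds. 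The diagonal part is reduced, via Lemma \ref{lemma:diagonalestimate} with $M \approx 2N$, to $\sup_\ell |\Lambda^\vee \cap T_\ell|$. The covering step there (covering the $C_Mt^{-1/M}$-neighborhood by translates of $\Nc_1(K)$) costs about $C^N$.

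The main step is the probability bound. Write $T_\ell \subseteq C hg^{-t}\B_1^N$ with $C$ absolute, and cover $Chg^{-t}\B_1^N$ by $C^N$ translates of $hg^{-t}\B_1^N$, using the difference trick from Lemma \ref{lemma:diagonalestimate}. Proposition \ref{prop:quantheightcount} then gives
\begin{equation*}
    \sup_\ell |\Lambda^\vee \cap T_\ell| \le C^N 6^N N! \, \Max_\zbf^t(\Lambda^\vee).
\end{equation*}
For the union bound I need the net of Lemma \ref{lemma:netsize} with an explicit size. For $\zbf = (1-1/N,-1/N,\dots)$ only the first row of $\Omega$ is constrained, so the relevant directions form a sphere $\S^{n-1}$ covered at scale $e^{-t}$. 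This takes $\lesssim C^n e^{(n-1)t}$ caps, which is independent of $N$ up to $C^N$. Each ellipsoid is mapped to a ball by an element of $G$, and Theorem \ref{thm:cuspvolumequant} with Proposition \ref{prop:quantheightcount} gives
\begin{equation*}
    \P(|\Lambda \cap \B_1^N| \ge A) \le \mu(\lattice^{\thin,\,2^N/A}) \lesssim \frac{\pi^{N/2}}{N\Gamma(N/2)}\frac{2^{N^2}}{A^N}.
\end{equation*}
Because the exponent $N$ is applied to $2^N$, I apply this with $A = \lambda\cdot e^{\delta_\zbf t/N}$ and the $2^N$ absorbed as $2^N$ in $\lambda$ rather than raised to the $N$-th power. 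That is, the threshold becomes $\alpha \ge A/2^N$, giving probability $\lesssim \Gamma(N/2)^{-1}\pi^{N/2}(2^N/A)^N$.

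Setting $A = 2^N\cdot 10^3 C^{n/N}e^{(n-1)t/N}$ makes the total probability $\le 10^{-3}$. Combining with the $C^N6^NN!$ factor and the $N^{O(N)}$ losses from the Schwartz tails gives a threshold of the form $C_0 600^N N^{2N} R^{(n-1)/N}$; here $N! \le N^N$, and the remaining $N^N$ comes from the diagonal and off-diagonal $M$-dependent constants with $M \sim N$. I expect the main obstacle to be keeping the Schwartz-tail constants $C_M$ with $M \sim 2N$ under control at $N^{O(N)}$ (choosing $b_1$ so that $|\phi_1(x)| \le (CM)^M|x|^{-M}$), together with ensuring that the net and covering constants contribute only $C^N$ and never $C^{N^2}$.
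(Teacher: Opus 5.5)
Your skeleton matches the paper's: a net of $\lesssim e^{(n-1)t}$ rotations, a union bound, $G$-invariance to move each ellipsoid to a ball, and Theorem~\ref{thm:cuspvolumequant} fed through Proposition~\ref{prop:quantheightcount}. However, the step that produces the $N$-dependence uses Proposition~\ref{prop:quantheightcount} in the wrong direction. The inequality $\alpha(\Lambda)\le 2^N|\B_1^N\cap\Lambda|$ only gives $\{\alpha\ge X\}\subseteq\{|\Lambda\cap\B_1^N|\ge 2^{-N}X\}$. Your bound $\P(|\Lambda\cap\B_1^N|\ge A)\le\mu(\lattice^{\thin,2^N/A})$ needs the reverse inclusion $\{|\Lambda\cap\B_1^N|\ge A\}\subseteq\{\alpha\ge 2^{-N}A\}$, which the proposition does not provide. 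To control the probability of a large count you must use the other inequality, $|\B_1^N\cap\Lambda|\le 6^NN!\,\alpha(\Lambda)$. This gives $\mu(\lattice^{\thin,6^NN!/A})\lesssim\frac{\pi^{N/2}}{N\,\Gamma(N/2)}(6^NN!/A)^N$, so the factor you absorb into the threshold is $6^NN!$, not $2^N$. This is exactly where the paper's $(6N)^N$ comes from. Its other factor $(100N)^N$ comes from covering $\B^N_{2\sqrt N}$ by balls of radius $\frac{1}{4\sqrt N}$; your $C^N$ covering of $Chg^{-t}\B_1^N$ is cheaper at that point.

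This breaks your bookkeeping. Because you bound $\Max_\zbf^t$ by a union bound over counts in ellipsoids, your chain is tube count $\to\Max_\zbf^t\to$ ellipsoid count $\to\alpha$. You pay $6^NN!$, then $2^N$, then (correctly) $6^NN!$ again, so the conversions alone cost $C^N(N!)^2$, already of the full size $N^{2N}$. Your closing account ($N!\le N^N$ plus ``the remaining $N^N$'' from the $M$-dependent Schwartz constants) therefore undercounts by a factor of order $N!$. With your own estimate for those constants you would land near $N^{3N}$ rather than $C_0600^NN^{2N}$.

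Two repairs are available.
\begin{enumerate}
\item Avoid the round trip, as the paper does: never pass through $\Max_\zbf^t$, run the net and union bound directly on lattice counts, and convert count to height only once, at the end.
\item Keep $\Max_\zbf^t$ but run the union bound on $\alpha$ itself. If $h=\exp(\Omega)h_\theta$ with $\Omega$ in the box of Lemma~\ref{lemma:netsize}, then $g^th=b\,g^th_\theta$ with $b=\exp(g^t\Omega g^{-t})$. This $b$ has bounded entries and acts only on the $\R^n$-block, so $\Covol(b\Delta)\gtrsim_n\Covol(\Delta)$ for every sublattice. Hence $\alpha(g^th\Lambda)\lesssim_n\alpha(g^th_\theta\Lambda)$, with no count conversion inside the tail bound.
\end{enumerate}
Separately, for the Euclidean ball Hadamard's inequality $\Covol(\Delta)\le\prod_i\lambda_i(\Delta)$ removes the $N!$ from the first inequality of Proposition~\ref{prop:quantheightcount} altogether.
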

\begin{proof}
    If we replace tubes $T_\ell$ in the proof of Lemma \ref{lemma:propertyii} with cubes of the same dimensions, we obtain
\begin{align*}
    \P \left( \sup_{\ell \in \Gr_1(V)} |\Lambda \cap T_\ell| \ge A \right) &\le \P\left(\, \sup_{h \in H} |g^{t}h\Lambda \cap \B_{2\sqrt{N}}^N| \ge A \right).
\end{align*}
By subdividing this ball into $\le (100N)^N$ translates of $\B_{\frac{1}{4\sqrt{N}}}^N$ and using an argument as in the proof of Lemma \ref{lemma:diagonalestimate}, we see that
\begin{align*}
    \P\left(\, \sup_{h \in H} |g^{t}h\Lambda \cap \B_{2\sqrt{N}}^N| \ge A \right) &\le \P\left(\, \sup_{h \in H} |g^{t}h\Lambda \cap \B_{\frac{1}{2\sqrt{N}}}^N| \ge (100N)^{-N}A \right).
\end{align*}
Using the same net argument as in Lemma \ref{lemma:netsize}, we find that there exists some constants $c_1,c_2 > 0$ (independent of $N$) for which
\begin{equation*}
    \P\left(\, \sup_{h \in H} |g^{t}h\Lambda \cap \B_{\frac{1}{2\sqrt{N}}}^N| \ge (100N)^{-N}A \right) \le c_1e^{\delta_\zbf t}\P(|\Lambda \cap \B_1^N| \ge c_2(100N)^{-N}A).
\end{equation*}
Now, combining Theorem \ref{thm:cuspvolumequant} and Proposition \ref{prop:quantheightcount}, we obtain
\begin{align*}
    \P(|\Lambda \cap \B_1^N| \ge Te^{\delta_\zbf t/N}) &\le \P\left( \alpha(\Lambda) \ge (6N)^{-N}Te^{\delta_\zbf t/N} \right) \\
    &\lesssim (6N)^{N^2}\frac{\pi^{N/2}}{N\cdot \Gamma(N/2)\zeta(N)}T^{-N}e^{-\delta_\zbf t}.
\end{align*}
Therefore, when $N \gg 1$ we obtain
\begin{align*}
    \P \left(\, \sup_{\ell \in \Gr_1(V)} |\Lambda \cap T_\ell| \ge TR^{\frac{n-1}{N}} \right) &\le c_1 R^{n-1}\P\left(|\Lambda \cap \B_1^N| \ge c_2(100N)^{-N}TR^{\frac{n-1}{N}}\right) \\
    &\le c_1c_2^{-N}600^{N^2}N^{2N^2}T^{-N}.
\end{align*}
Now, we require
\begin{equation*}
    c_1c_2^{-N}600^{N^2}N^{2N^2}T^{-N} \le \frac{1}{1000},
\end{equation*}
so it suffices to choose
\begin{equation*}
    T \ge (1000c_1c_2^{-1})600^N N^{2N} = N^{O(N)}.
\end{equation*}
\end{proof}

Combining Lemma \ref{lemma:quantativeprobbound} with the $N^{O(N)}$-error terms coming from properties (i) and (ii), we see that
\begin{align*}
    \norm{\hat{w}}_{L^2(\Sigma_R)}^2 &\ge N^{-O(N)}R^{\frac{n-1}{n-1+k}}R^{n-1} \\
    &\ge N^{-O(N)}N^{-O(N)}R^{\frac{n-1}{n-1+k}}\norm{w}_{L^1} \\
    &\ge N^{-O(N)}N^{-O(N)}N^{-O(N)} R^{\frac{n-1}{n-1+k}}R^{-\frac{n-1}{N}}\norm{Xw}_{L^\infty}\norm{w}_{L^1}.
\end{align*}
Therefore, there is $C_1 \ge 1$ so that for $N \gg 1$ and $R \gg 1$ we obtain
\begin{equation*}
    R^{\frac{n-1}{n-1+k}} \norm{\hat{w}}_{L^2(\Sigma_R)}^{-2} \norm{Xw}_{L^\infty} \norm{w}_{L^1} \le N^{C_1N}R^{\frac{n-1}{N}}.
\end{equation*}
For optimal estimates, we then take $N^2\log N \sim \log R$, after which we obtain
\begin{equation*}
    R^{\frac{n-1}{n-1+k}} \norm{Xw}_{L^\infty} \norm{w}_{L^1} \norm{\hat{w}}_{L^2(\Sigma_R)}^{-2} \le \exp\left(O(\sqrt{\log R \log\log R})\right).
\end{equation*}
This completes the proof of Theorem \ref{thm:LocalMTFailsalways}.

%% file: Sections/9-cuspvolumes.tex
\section{Cusp volumes}\label{section:cuspvolumes}

In this appendix, we establish Theorem \ref{thm:cuspvolumequant}, completing the proof of Theorem \ref{thm:LocalMTFailsalways}.

\subsection{Unfolding}

For each $1 \le p \le N-1$, write
\begin{equation*}
    \lattice^{\thin,\epsilon,p} = \left\{ \Lambda : \alpha_p(\Lambda) \ge \epsilon^{-1} \right\}
\end{equation*}
for the \textit{$p$-cusp} of $\lattice$. Let us also set $q = N-p$, so $p+q=N$. It will also be convenient to consider the (Archimedean portion of the) \textit{Tamagawa measure} $\omega$ on $G$, for which we have $\int_{G/\Gamma}\domega = \zeta(2) \cdots \zeta(N)$ (see \cite[Theorem 3.3.1]{Wei} or \cite{Gar}).

\begin{lemma}\label{lemma:pvolumesparabolic}
    For each $1 \le p \le N-1$, we have
    \begin{equation*}
        \int_{G/\Gamma} \ind_{\lattice^{\thin,\epsilon,p}} \domega \le \int_{G/(\Gamma \cap P)} \ind_{\Covol(g\Delta_0) \le \epsilon} \domega,
    \end{equation*}
    where $\Delta_0 = \Z^p \times 0^q$ and
    \begin{equation*}
        P = \left\{ \begin{bmatrix}
        X & Z \\
        0 & Y
    \end{bmatrix} : X \in \GL_p(\R),\ Y \in \GL_q(\R),\ \det(X)\det(Y) = 1,\ Z \in \Mat_{p \times q}(\R) \right\}.
    \end{equation*}
\end{lemma}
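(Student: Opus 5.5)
This is a standard unfolding argument: bound the indicator of the $p$-cusp on $G/\Gamma$ by a sum over the coset space $\Gamma/(\Gamma \cap P)$, and then unfold the integral.

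First I would describe the primitive rank-$p$ sublattices. For $g \in G$, the primitive $p$-dimensional sublattices $\Delta$ of $g\Z^N$ (those with $\Delta = g\Z^N \cap \Span_\R \Delta$) are exactly the sets $g\gamma\Delta_0$ for $\gamma \in \Gamma$. This holds because $\SL_N(\Z)$ acts transitively on primitive rank-$p$ subgroups of $\Z^N$. The stabilizer of $\Delta_0$, or equivalently of the subspace $\R^p \times 0^q$, in $\Gamma$ is precisely $\Gamma \cap P$. Hence the primitive $p$-sublattices of $g\Z^N$ are in bijection with $\Gamma/(\Gamma\cap P)$ via $\gamma \mapsto g\gamma\Delta_0$.

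Next I would reduce to primitive sublattices. If $\Covol_V(g\Z^N \cap V) \le \epsilon$ for some $V \in \Gr(p,N)$, then $\Delta = g\Z^N \cap V$ is automatically primitive. The supremum defining $\alpha_p$ is therefore a supremum over primitive $\Delta$. Up to the boundary issue of $\ge \epsilon^{-1}$ versus attainment, which is harmless since the sup is attained by discreteness of covolumes below any bound, we get
\begin{equation*}
    \ind_{\lattice^{\thin,\epsilon,p}}(g\Gamma) \le \sum_{\gamma \in \Gamma/(\Gamma \cap P)} \ind_{\Covol(g\gamma\Delta_0) \le \epsilon}.
\end{equation*}
This is an inequality rather than an equality because several sublattices may have small covolume. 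The right-hand side is well defined, since $\Covol(g\gamma\delta\Delta_0) = \Covol(g\gamma\Delta_0)$ for $\delta \in \Gamma \cap P$, as such $\delta$ preserves $\Delta_0$. The right-hand side is also right $\Gamma$-invariant in $g$.

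Finally I would integrate over $G/\Gamma$ against $\omega$ and apply the standard unfolding identity
\begin{equation*}
    \int_{G/\Gamma}\sum_{\gamma \in \Gamma/(\Gamma\cap P)} F(g\gamma)\,\domega(g) = \int_{G/(\Gamma\cap P)} F\,\domega
\end{equation*}
for nonnegative measurable $F$ on $G/(\Gamma\cap P)$. This is Fubini/Tonelli applied to a fundamental domain for $\Gamma$ together with its translates by coset representatives, which tile a fundamental domain for $\Gamma \cap P$. Nonnegativity means no convergence issues arise, even if the right side were infinite.

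The only step needing care is the transitivity and stabilizer claim in the first step. To check it, extend a basis of a primitive subgroup of $\Z^N$ to a basis of $\Z^N$, adjusting the sign to land in $\SL_N(\Z)$. Then check that $\gamma\Delta_0 = \Delta_0$ forces $\gamma$ to be block upper triangular, i.e. to lie in $P$.
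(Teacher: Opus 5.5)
Your proposal is correct and follows the paper's proof: you bound the indicator of the $p$-cusp by the number of primitive rank-$p$ sublattices of covolume at most $\epsilon$, parametrize these by $\Gamma/(\Gamma\cap P)$ using transitivity and the stabilizer computation, and then unfold. Your extra remarks fill in details the paper leaves implicit, namely attainment of the supremum defining $\alpha_p$, well-definedness of the sum on cosets, and the fundamental-domain justification of the unfolding identity.
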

\begin{proof}
    The proof runs by unfolding:
    \begin{align*}
        \int_{\lattice}\ind_{\alpha_p(\Lambda) \ge \epsilon^{-1}} \domega
        &\le \int_{\lattice}\sum_{\substack{\Delta \le \Lambda \\ \text{$\Delta$ primitive} \\ \rk \Delta = p}} \ind_{\Covol(\Delta) \le \epsilon}\domega \\
        &= \int_{G/\Gamma} \sum_{\Delta \le \Z^N} \ind_{\Covol(g\Delta) \le \epsilon} \domega.
    \end{align*}
    $\Gamma$ acts transitively on the space of primitive rank-$p$ sublattices of $\Z^N$, and the stabilizer of $\Delta_0 = \Z^p \times 0^q$ is precisely $\Gamma \cap P$. Thus, we have
    \begin{equation*}
        \int_{G/\Gamma} \sum_{\Delta \le \Z^N} \ind_{\Covol(g\Delta) \le \epsilon} \domega = \int_{G/\Gamma} \sum_{\gamma \in \Gamma/(\Gamma \cap P)} \ind_{\Covol(g\gamma\Delta_0) \le \epsilon} \domega = \int_{G/(\Gamma \cap P)}\ind_{\Covol(g\Delta_0) \le \epsilon}\domega.
    \end{equation*}
\end{proof}

\subsection{Oriented double covers and fundamental domains}

In order to avoid unnecessary tracking of factors of 2, it will be convenient to work with the double covers:
\begin{equation*}
    \Tilde{G} = \SL_N^{\pm}(\R), \quad \Tilde{\Gamma} = \SL_N^{\pm}(\Z), \quad \Tilde{K} = \Ortho(N),
\end{equation*}
where $\SL_\ell^\pm(A) = \{g \in \GL_\ell(A) : \det(g) = \pm 1\}$. Given $p+q = N$, we consider the parabolic subgroup $\Tilde{P} \le \Tilde{G}$ defined by
\begin{equation*}
    \Tilde{P} = \left\{ \begin{bmatrix}
        X & Z \\
        0 & Y
    \end{bmatrix} : X \in \GL_p(\R),\ Y \in \GL_q(\R),\ \det(X)\det(Y) = \pm 1,\ Z \in \Mat_{p \times q}(\R) \right\}.
\end{equation*}
Let us additionally set $\Tilde{\Gamma}_P = \Tilde{\Gamma} \cap \Tilde{P}$. In view of Lemma \ref{lemma:pvolumesparabolic}, we would like to produce a natural fundamental domain for $\Tilde{\Gamma}_P \acts \Tilde{G}$.

\begin{lemma}\label{lemma:fundamentaldomains}
    There is a natural identification
    \begin{equation*}
        \Tilde{G}/\Tilde{\Gamma}_P \cong \Gr(p,N) \times \R_+ \times \Ls_p \times \Ls_q \times \T^{p \times q}.
    \end{equation*}
\end{lemma}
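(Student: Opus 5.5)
We identify $\Tilde{G}/\Tilde{\Gamma}_P$ with the space of pairs $(L,\Delta)$, where $L = g\Z^N$ is a lattice of covolume one and $\Delta = g\Delta_0 \le L$ is a primitive rank-$p$ sublattice. Indeed, $\Tilde{G}$ acts transitively on such pairs, and the stabilizer of $(\Z^N,\Delta_0)$ is $\Tilde{\Gamma}_P$: an element of $\SL_N^{\pm}(\Z)$ preserving $\Z^p \times 0^q$ is block upper triangular. The sign ambiguity is why we pass to $\SL^\pm$ and $\Ortho(N)$, since then no orientation data has to be tracked. The task is to parametrize such pairs by the five factors in the claim.

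The map is built as follows. First, $V = \Span_\R(\Delta) \in \Gr(p,N)$. Second, $s = \Covol(\Delta) \in \R_+$. Third, we fix a measurable choice of isometry $k_V \in \Ortho(N)$ taking $\R^p \times 0$ to $V$; this is a section of $\Ortho(N) \to \Ortho(N)/(\Ortho(p)\times\Ortho(q)) = \Gr(p,N)$, taken well-defined only up to the $\Ortho(p)\times\Ortho(q)$ ambiguity, which is absorbed because we work with lattices up to isometry in each factor via $\SL^\pm$. Then $s^{-1/p}k_V^{-1}\Delta$ is a unimodular lattice in $\R^p$, giving a point of $\Ls_p$.

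Fourth, we take the orthogonal projection of $L$ onto $V^\perp$. This is a lattice in $V^\perp$ of covolume $1/s$, since $\Covol(L) = \Covol(\Delta)\cdot\Covol(\proj_{V^\perp}L)$ by primitivity. After rotating by $k_V$ and rescaling by $s^{1/q}$, it gives a point of $\Ls_q$.

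Fifth, with these four pieces fixed, the remaining freedom in $L$ is the choice of lifts of a basis of $\proj_{V^\perp}L$ back to $L$. Each lift is determined modulo $\Delta$, that is, by an element of $V/\Delta$ for each of the $q$ basis vectors. This gives a point of $(V/\Delta)^q \cong (\R^p/\Z^p)^q = \T^{p\times q}$, using the chosen bases.

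In matrix terms, we write $g = k\,a\,\begin{bmatrix} X & Z\\ 0 & Y\end{bmatrix}$, where $k\in\Tilde K$, $a = \diag(s^{1/p}I_p, s^{-1/q}I_q)$, and $X \in \SL^\pm_p$, $Y\in\SL^\pm_q$. Right multiplication by $\Tilde\Gamma_P$ then reduces $X$ modulo $\SL^\pm_p(\Z)$, reduces $Y$ modulo $\SL^\pm_q(\Z)$, and reduces $Z$ (after normalizing by $X$) modulo $\Mat_{p\times q}(\Z)$. This is the Langlands decomposition $\Tilde P = M A N$, with $\Tilde G = \Tilde K \Tilde P$ and $\Tilde K \cap \Tilde P = \Ortho(p)\times\Ortho(q)$.

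The main obstacle is making this bookkeeping honest. The $\Ortho(p)\times\Ortho(q)$ overlap between $\Tilde K$ and $\Tilde P$ is what lets $\Tilde K$ collapse to $\Gr(p,N)$. However, it also acts on $\Ls_p\times\Ls_q\times\T^{p\times q}$, so the identification is really as a bundle over $\Gr(p,N)$, trivialized by a measurable section $V\mapsto k_V$. I would state the identification measurably and almost everywhere in this sense. That is all that is needed for the subsequent volume computation.

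To finish, I would verify that the map is a bijection by writing down the inverse. Given the five data, take $g = k_V\,a\,\begin{bmatrix} X & Z\\ 0 & Y\end{bmatrix}$, with $X$ and $Y$ representatives of the two unimodular lattices and $Z$ a representative of the torus point. Different representatives change $g$ exactly by right multiplication by $\Tilde\Gamma_P$, and the four preceding invariants recover $(V, s, X, Y)$ uniquely.
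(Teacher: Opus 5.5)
Your proposal is correct and follows the paper's argument. The ingredients are the same: $\Tilde{G} = \Tilde{K}\Tilde{P}$ with $\Tilde{K}\cap\Tilde{P} = \Ortho(p)\times\Ortho(q)$ collapsing $\Tilde{K}$ to $\Gr(p,N)$, and the Langlands decomposition $\Tilde{P} = MAU$ with $\Tilde{\Gamma}_P = \Gamma_M\ltimes\Gamma_U$ reducing the $M$- and $U$-factors to $\Ls_p\times\Ls_q$ and $\T^{p\times q}$; you recast this in the language of pairs $(L,\Delta)$, and your caveat that the ``product'' is only a measurable trivialization of a bundle (via $V\mapsto k_V$ and your fixed representatives $X,Y$) is accurate, more careful than the paper, and enough for Lemma \ref{lemma:pcuspvolumeupper}, though in your inverse map you should take $Z = XW$ with $W$ representing the torus point, to match your normalization of $Z$ by $X$.
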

\begin{proof}
    To start, we note that $\Tilde{K}\Tilde{P} = \Tilde{G}$, and we have
    \begin{equation*}
        \Tilde{K} \cap \Tilde{P} = \Ortho(p) \times \Ortho(q).
    \end{equation*}
    Thus, it suffices to find a fundamental domain for $\Tilde{\Gamma}_P$ acting on $\Tilde{P}$, after which we may take a product with the compact quotient
    \begin{equation*}
        \Tilde{K}/(\Tilde{K} \cap \Tilde{P}) = \Gr(p,N).
    \end{equation*}
    Associated to $\Tilde{P}$ is the \textit{Langlands decomposition} $\Tilde{P} = MAU$, where
    \begin{align*}
        M &= \left\{ \begin{bmatrix}
            g_p & 0 \\
            0 & g_q
        \end{bmatrix} : g_p \in \SL_p^{\pm}(\R),\ g_q \in \SL_q^{\pm}(\R) \right\} \\
        A &= \left\{\begin{bmatrix}
            t^{1/p} \Id_p & 0 \\
            0 & t^{-1/q}\Id_q
        \end{bmatrix} : t \in \R_+\right\} \\
        U &= \left\{\begin{bmatrix}
            \Id_p & Z \\
            0 & \Id_q
        \end{bmatrix} : Z \in \Mat_{p \times q}(\R)\right\}.
    \end{align*}
    Then we have a decomposition
    \begin{equation*}
        \Tilde{\Gamma}_P = \Gamma_M \ltimes \Gamma_U,
    \end{equation*}
    where $\Gamma_M = \Tilde{\Gamma}_P \cap M$ and $\Gamma_U = \Tilde{\Gamma}_P \cap U$. Note that $\Tilde{\Gamma}_P \cap A$ is trivial. Thus, we may decompose
    \begin{equation*}
        \Tilde{P}/\Tilde{\Gamma}_P \cong A \times M/\Gamma_M \times U/\Gamma_U.
    \end{equation*}
    Note that $A \cong \R_+$. Next, we identify $U \cong \R^{p \times q}$, so that $\Gamma_U \cong \Z^{p \times q}$, and
    \begin{equation*}
        U/\Gamma_U \cong \R^{p \times q}/\Z^{p \times q} \cong \T^{p \times q}.
    \end{equation*}
    Finally, we identify $M \cong \SL_p^{\pm}(\R) \times \SL_q^{\pm}(\R)$, so that
    \begin{equation*}
        M/\Gamma_M \cong \left( \SL_p^{\pm}(\R) \times \SL_q^{\pm}(\R)\right)/\left(\SL_p^{\pm}(\Z) \times \SL_q^{\pm}(\Z) \right) \cong \Ls_p \times \Ls_q,
    \end{equation*}
    as desired.
\end{proof}

\subsection{The upper bound}

Let $\xi(s) = \pi^{-s/2}\Gamma(s/2)\zeta(s)$ be the completed zeta function, and define
\begin{equation*}
    c_p = \prod_{\ell=2}^p \xi(\ell) \quad \mathrm{and} \quad C_{p,q} = \frac{c_pc_q}{Nc_N},
\end{equation*}
with $c_p = 1$ if $p =1$.

\begin{lemma}\label{lemma:pcuspvolumeupper}
    For each $1 \le p \le N-1$, we have
    \begin{equation*}
        \mu\left( \lattice^{\thin,\epsilon,p} \right) \le C_{p,q}\epsilon^N.
    \end{equation*}
\end{lemma}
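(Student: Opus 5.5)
Starting from Lemma \ref{lemma:pvolumesparabolic}, the plan is to compute the unfolded integral exactly in the coordinates of Lemma \ref{lemma:fundamentaldomains}, and then normalize by the Tamagawa volume of $G/\Gamma$. We first pass to the double covers $\Tilde G,\Tilde\Gamma,\Tilde\Gamma_P$. The factors of $2$ from $\pm 1$ determinants should cancel between numerator and denominator, since $\Tilde G/\Tilde\Gamma \cong G/\Gamma$ and the stabilizer bookkeeping is identical. This gives
\begin{equation*}
    \omega\bigl(\lattice^{\thin,\epsilon,p}\bigr) \le \omega\bigl(\{g \in \Tilde G/\Tilde\Gamma_P : \Covol(g\Delta_0) \le \epsilon\}\bigr).
\end{equation*}

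Next we write $g = kamu$ with $k \in \Tilde K$, $a = \diag(t^{1/p}\Id_p, t^{-1/q}\Id_q)$, $m \in M$, $u \in U$. Then $g\Delta_0 = k a m\Delta_0$, and its covolume is $t\cdot|\det g_p| = t$. So the condition $\Covol(g\Delta_0) \le \epsilon$ reads simply $t \le \epsilon$, and it is independent of the $\Gr(p,N)$, $\Ls_p$, $\Ls_q$, and $\T^{p\times q}$ factors. The key step is to write the Haar (Tamagawa) measure in these coordinates. It should be a product of four pieces:
\begin{itemize}
    \item the $\Tilde K$-invariant measure on $\Gr(p,N)$, of total mass $\mathrm{vol}(\Ortho(N))/(\mathrm{vol}(\Ortho(p))\mathrm{vol}(\Ortho(q)))$ in the normalization compatible with $\omega$;
    \item the Tamagawa measures on $\Ls_p$ and $\Ls_q$, of masses $\zeta(2)\cdots\zeta(p)$ and $\zeta(2)\cdots\zeta(q)$;
    \item Lebesgue measure on $\T^{p\times q}$, of mass $1$;
    \item the modular factor on $A$, namely $t^{N}\,\frac{dt}{t}$ up to a constant depending on how $t$ is normalized. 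This comes from $\det \mathrm{Ad}(a)|_{\mathfrak u} = (t^{1/p+1/q})^{pq} = t^{N}$.
\end{itemize}
Integrating $t^{N-1}\,dt$ over $t \le \epsilon$ gives $\epsilon^N/N$, which is the source of the factor $1/N$ in $C_{p,q}$.

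Finally we divide by $\omega(G/\Gamma) = \zeta(2)\cdots\zeta(N)$ to convert to $\mu$. The volumes of orthogonal groups are expressed through $\mathrm{vol}(\S^{\ell-1}) = 2\pi^{\ell/2}/\Gamma(\ell/2)$, and the ratio of sphere products becomes a ratio of the form $\prod_{\ell\le N}\pi^{\ell/2}/\Gamma(\ell/2)$ divided by the same products up to $p$ and $q$. Combining these with the zeta products assembles exactly into $\xi(\ell)=\pi^{-\ell/2}\Gamma(\ell/2)\zeta(\ell)$, yielding $c_pc_q/(Nc_N)$.

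The main obstacle is this normalization bookkeeping. Everything must be computed in the same Haar normalization in which $\omega(G/\Gamma)=\prod\zeta(\ell)$ holds (Siegel's/Weil's computation). I would fix it by checking consistency via induction: apply the same decomposition with $p=1$ to $\Tilde G/\Tilde\Gamma_P$ itself and compare with the known answer, which also fixes the constant in the $A$-measure. Any leftover absolute factors (such as powers of $2$, or the Jacobian relating $dt/t$ to the Lie algebra measure) I would verify this way, and the inequality form of the lemma leaves room for harmless slack there.
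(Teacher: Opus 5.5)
Your proposal is correct and follows the paper's proof essentially step for step: unfold via Lemma~\ref{lemma:pvolumesparabolic}, pass to $\Tilde{G}/\Tilde{\Gamma}_P$ with the coordinates of Lemma~\ref{lemma:fundamentaldomains}, observe that $\Covol(g\Delta_0)=t$ and $\rho(a^t)=t^N$ so the integral becomes $\int_0^\epsilon t^{N-1}\,dt=\epsilon^N/N$ times the volumes, and assemble the sphere and zeta products into $c_pc_q/(Nc_N)$. One caveat: the only inequality in this argument comes from overcounting sublattices in the unfolding, so it leaves no room to absorb stray normalization factors in the exact constant $C_{p,q}$ (and a $p=1$ Siegel check only pins down the $p=1$ normalization). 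Those factors must actually cancel, as the powers of $2$ do in the paper's computation.
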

\begin{proof}
    By Lemma \ref{lemma:pvolumesparabolic}, we have
    \begin{equation*}
        \mu\left( \lattice^{\thin,\epsilon,p} \right) \le \frac{1}{\zeta(2)\cdots \zeta(N)}\int_{G/\Gamma_P} \ind_{\Covol(g\Delta_0) \le \epsilon}\domega.
    \end{equation*}
    We may identify $G/\Gamma_P \cong \Tilde{G}/\Tilde{\Gamma}_P$, and then apply Lemma \ref{lemma:fundamentaldomains} to obtain
    \begin{equation*}
        \int_{\Tilde{G}/\Tilde{\Gamma}_P} \ind_{\Covol(g\Delta_0) \le \epsilon}\domega = \int_{\Gr(p,N)}\int_{\R_+}\int_{\Ls_p}\int_{\Ls_q}\int_{\T^{p\times q}} \rho(a)\ind_{\Covol(kmau\Delta_0)\le \epsilon}\du\da\dm\dk,
    \end{equation*}
    where $\rho(a)$ is the Jacobian of $\mathrm{Ad}(a)|_U$. Under the identification $A \cong \R_+$ given by
    \begin{equation*}
        t \mapsto \begin{bmatrix}
            t^{1/p}\Id_p & 0 \\
            0 & t^{-1/q}
        \end{bmatrix},
    \end{equation*}
    we have $\da = \frac{\dt}{t}$, and $a = a^t \in A$ acts on $U$ via uniform scaling by $t^{\frac{1}{p} + \frac{1}{q}} = t^{\frac{N}{pq}}$; thus
    \begin{equation*}
        \rho(a^t) = (t^{\frac{N}{pq}})^{\dim U} = t^N.
    \end{equation*}
    Thus, we are reduced to computing the integral
    \begin{equation*}
        \int_{\Gr(p,N)} \int_{\R_+} \int_{\Ls_p} \int_{\Ls_q} \int_{\T^{p\times q}} t^{N-1}\ind_{\Covol(kma^tu\Delta_0)\le \epsilon}\du\dt\dm\dk.
    \end{equation*}
    Note that none of the $k,m,u$-factors affect the covolume, and $\Covol(a^t\Delta_0) = t$, so the above integral is simply given by
    \begin{equation*}
        \Vol(\Gr(p,N))\Vol(\Ls_p)\Vol(\Ls_q)\Vol(\T^{p \times q}) \int_0^\epsilon t^{N-1}\dt.
    \end{equation*}
    Thus, we obtain
    \begin{align*}
        \int_{G/\Gamma_P} \ind_{\Covol(g\Delta_0) \le \epsilon}\dmu(g) &= \frac{\prod_{\ell=1}^N\frac{2\pi^{\ell/2}}{\Gamma(\ell/2)}}{\left(\prod_{\ell=1}^p\frac{2\pi^{\ell/2}}{\Gamma(\ell/2)} \right)\left(\prod_{\ell=1}^q\frac{2\pi^{\ell/2}}{\Gamma(\ell/2)} \right)} \frac{\left(\prod_{\ell=2}^p\zeta(\ell)\right)\left(\prod_{\ell=2}^q\zeta(\ell) \right)}{\prod_{\ell=2}^N\zeta(\ell)}\frac{\epsilon^N}{N} \\
        &= \frac{c_pc_q}{Nc_N}\epsilon^N.
    \end{align*}
\end{proof}

\begin{cor}\label{cor:cuspvolumeupper}
    For all $N \ge 2$ and $0 < \epsilon < 1$, we have
    \begin{equation*}
        \mu\left( \lattice^{\thin,\epsilon} \right) \le (2+o_{N \to \infty}(1))\frac{\pi^{N/2}}{N \cdot \Gamma(N/2)\zeta(N)}\epsilon^N.
    \end{equation*}
\end{cor}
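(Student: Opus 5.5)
The plan is to combine the union bound $\mu(\lattice^{\thin,\epsilon}) \le \sum_{p=1}^{N-1}\mu(\lattice^{\thin,\epsilon,p})$ with Lemma \ref{lemma:pcuspvolumeupper}. This reduces the claim to an estimate on the constants:
\begin{equation*}
    \sum_{p=1}^{N-1} C_{p,N-p} \le (2+o(1))\frac{\pi^{N/2}}{N\Gamma(N/2)\zeta(N)}.
\end{equation*}

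The first step is to identify the two dominant terms, $p=1$ and $p=N-1$. These two are equal by the symmetry $C_{p,q}=C_{q,p}$, which corresponds to the duality $\Lambda \mapsto \Lambda^\vee$ exchanging $\alpha_p$ and $\alpha_{N-p}$. For $p=1$ we have $c_1=1$ and $c_{N-1}/c_N = 1/\xi(N)$, so
\begin{equation*}
    C_{1,N-1} = \frac{1}{N\xi(N)} = \frac{\pi^{N/2}}{N\Gamma(N/2)\zeta(N)}.
\end{equation*}
This is exactly the stated main term, and the two endpoint terms together produce the factor $2$.

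The remaining step, which is the main obstacle, is to show that the middle terms $2\le p\le N-2$ are negligible:
\begin{equation*}
    \sum_{p=2}^{N-2} \frac{c_pc_{N-p}}{c_N} = o\left(\frac{1}{\xi(N)}\right).
\end{equation*}
Writing $\frac{c_pc_q}{c_N} = \prod_{\ell=1}^{q}\frac{\xi(\ell)}{\xi(p+\ell)}$ with the convention $\xi(1):=1$, I will compare each $C_{p,q}$ with $C_{1,N-1}$:
\begin{equation*}
    \frac{C_{p,q}}{C_{1,N-1}} = \prod_{\ell=2}^{p}\frac{\xi(\ell)}{\xi(N-p+\ell)} \qquad (p\le q).
\end{equation*}

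The estimates needed for this ratio are the following. Since $\zeta(s)\in[1,\zeta(2)]$ for $s \ge 2$, the zeta factors contribute at most $O(1)^{p}$. The Gamma and $\pi$ factors behave like $\pi^{-(N-p)/2}\Gamma(\ell/2)/\Gamma((N-p+\ell)/2)$. For $\ell\ge 2$ and $N-p\ge N/2$, Stirling gives that each factor is at most $(C/N)^{(N-p)/2}$ for an absolute $C$, since $\Gamma((m+\ell)/2)/\Gamma(\ell/2) \gtrsim (m/2e)^{m/2}$ roughly. Thus for $2\le p\le N/2$ the ratio is at most $(C/N)^{(p-1)N/4}$. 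The case $p\ge N/2$ follows by the symmetry above.

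Summing over $p$ then gives a total relative contribution of $O(N\cdot (C/N)^{N/4}) = o(1)$. This also gives a uniform bound for small $N$, since each $C_{p,q}$ is finite and any bounded $N$ can be absorbed into the $o(1)$ term after enlarging it. I expect the Stirling bookkeeping here, namely getting a clean uniform bound on $\prod_\ell \Gamma(\ell/2)/\Gamma((q+\ell)/2)$, to be the only delicate point.
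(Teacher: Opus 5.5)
Your proposal is correct and is essentially the paper's argument: a union bound over $p$, Lemma~\ref{lemma:pcuspvolumeupper}, the endpoint terms $C_{1,N-1}=C_{N-1,1}=1/(N\xi(N))$ producing the factor $2$, and the growth of $\xi$ making the middle terms negligible; you also fill in the Stirling bookkeeping that the paper leaves implicit. There are two harmless slips. First, the ratio is actually $C_{p,q}/C_{1,N-1}=\prod_{\ell=2}^{p}\xi(\ell)/\xi(N-p+\ell-1)$; for example, $N=4$, $p=2$ gives $\xi(2)/\xi(3)$. Second, $\xi(\ell)/\xi(m+\ell)$ carries $\pi^{+m/2}$ rather than $\pi^{-m/2}$. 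Even so, each factor is still $\lesssim (C/N)^{(N-p-1)/2}$ for $2\le p\le N/2$, so your $o(1)$ conclusion stands.
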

\begin{proof}
    This follows immediately from Lemma \ref{lemma:pcuspvolumeupper} and the rapid growth of $\xi(s)$ as $s \to \infty$ along the real axis.
\end{proof}

\subsection{The lower bound}

We compute the lower bound separately for the cases $N=2$ and $N \ge 3$.

\begin{lemma}\label{lemma:cuspvolumelowerbound2}
    Suppose $N=2$. Then
    \begin{equation*}
        \mu\left( \Ls_2^{\thin,\epsilon} \right) = C_{1,1}\epsilon^2.
    \end{equation*}
\end{lemma}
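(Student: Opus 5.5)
For $N=2$ I will make the unfolding inequality of Lemma~\ref{lemma:pvolumesparabolic} an equality, and then evaluate the right-hand side exactly. The simplest way to evaluate it is Siegel's mean value theorem (Theorem~\ref{thm:Siegelsformula}), which is valid for $N=2$. Only $p=q=1$ occurs, so $\Ls_2^{\thin,\epsilon}=\{\Lambda:\alpha_1(\Lambda)>\epsilon^{-1}\}$. This is the set of unimodular lattices containing a nonzero vector of length $<\epsilon$, and hence a primitive one.

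\textbf{Step 1 (uniqueness of short primitive vectors).} Suppose $0<\epsilon<1$ and $\vbf,\wbf\in\Lambda_{\prim}$ satisfy $|\vbf|,|\wbf|<\epsilon$ and $\wbf\neq\pm\vbf$. Then $\vbf,\wbf$ are linearly independent, since two primitive vectors on the same line differ only by sign. They span a sublattice of $\Lambda$ of covolume at most $|\vbf||\wbf|<\epsilon^2<1$. A full-rank sublattice of a unimodular lattice has covolume at least $1$, so this is a contradiction. Hence
\begin{equation*}
    \ind_{\Ls_2^{\thin,\epsilon}}(\Lambda)=\tfrac12\,\bigl|\Lambda_{\prim}\cap \B_\epsilon^2\bigr| = \tfrac12\,\hat{\ind}_{\B_\epsilon}^{\SV}(\Lambda),
\end{equation*}
up to the $\mu$-null set where a primitive vector has length exactly $\epsilon$. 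Equivalently, the sum over rank-one primitive sublattices $\Delta$ in the first line of the proof of Lemma~\ref{lemma:pvolumesparabolic} has at most one term, so that inequality is an equality.

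\textbf{Step 2 (integrate).} Siegel's formula gives
\begin{equation*}
    \mu(\Ls_2^{\thin,\epsilon})=\frac{1}{2\zeta(2)}\Vol(\B_\epsilon^2)=\frac{\pi\epsilon^2}{2\zeta(2)}.
\end{equation*}
It remains to match this constant with $C_{1,1}$. Since $c_1=1$, we have $C_{1,1}=\frac{1}{2c_2}=\frac{1}{2\xi(2)}$. With $\xi(2)=\pi^{-1}\Gamma(1)\zeta(2)=\zeta(2)/\pi$, this gives $C_{1,1}=\frac{\pi}{2\zeta(2)}$, which equals $3/\pi$, as required.

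As a cross-check, I would also rerun the computation in Lemma~\ref{lemma:pcuspvolumeupper} with $p=q=1$. Every step there after the unfolding is an identity, so Step~1 already yields equality; the Siegel route just avoids re-verifying the Haar and Tamagawa normalizations. The main point of care is Step~1: the hypothesis $\epsilon<1$ is exactly what forces uniqueness, and one must keep track of the factor $\tfrac12$ coming from $\pm\vbf$ spanning the same $\Delta$.
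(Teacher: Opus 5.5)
Your proof is correct, but it takes a different route from the paper.

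\textbf{The paper's route.} The paper works on the modular surface. It uses the classical fundamental domain $\Fc$ and notes that $\alpha(\Lambda_\tau)=(\Im\tau)^{1/2}$ once $\alpha\ge 1$. It then computes the hyperbolic area $\int_{-1/2}^{1/2}\int_{\epsilon^{-2}}^\infty y^{-2}\,dy\,dx=\epsilon^2$ of the cusp region and normalizes by the area $\pi/3$ of $\Fc$, giving $3\epsilon^2/\pi=C_{1,1}\epsilon^2$.

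\textbf{Your route.} You prove the pointwise identity $\ind_{\Ls_2^{\thin,\epsilon}}=\tfrac12\hat{\ind}_{\B_\epsilon}^{\SV}$. Two independent primitive vectors of length $<\epsilon\le 1$ would span a full-rank sublattice of covolume $<1$. Since both the thin part and $\B_\epsilon$ are defined by strict inequalities, this identity holds everywhere, not just almost everywhere. You then integrate using Siegel's mean value theorem, which is valid for $N=2$.

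\textbf{Comparison.} Your argument needs no fundamental domain, no identification of $\mu$ with normalized hyperbolic area, and no knowledge of the area $\pi/3$; it uses only tools already stated in Section \ref{section:latticegeometry}. It also explains why the answer is exactly the constant from Lemma \ref{lemma:pcuspvolumeupper}: for $N=2$ the unfolding inequality of Lemma \ref{lemma:pvolumesparabolic} is an equality.

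Your argument is also the exact $N=2$ counterpart of the proof of Lemma \ref{lemma:cuspvolumelowerbound3+}. For $N\ge 3$, two short independent vectors span only a rank-$2$ sublattice, so your covolume argument breaks. The count $|\Lambda_{\prim}\cap\B_\epsilon^N|$ can then exceed $2$, which forces the second-moment method. For $N=2$ the count is deterministically $0$ or $2$, so the first moment suffices and Rogers' formula, which is unavailable for $N=2$, is never needed.

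The paper's computation, conversely, does not depend on the normalization in Siegel's formula, so the two proofs together give a consistency check on the constants. Your proposed cross-check through Lemma \ref{lemma:pcuspvolumeupper} is unnecessary and would require verifying the Haar and Tamagawa normalizations there. Both arguments implicitly need $\epsilon\le 1$, as you note.
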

\begin{proof}
    Recall that we may identify $\SO(2)\backslash{\SL_2(\R)} = \H^2 = \{\tau = x+iy : y > 0\}$. Under this identification, a fundamental domain for the $\Gamma$-action is given by
    \begin{equation*}
        \Fc = \left\{ \tau : |\Re\tau| \le 1/2,\ |\tau| \ge 1 \right\}.
    \end{equation*}
    A point $\tau \in \Fc$ corresponds to the lattice
    \begin{equation*}
        \Lambda_\tau = (\Im \tau)^{-1/2}(\Z + \tau \Z).
    \end{equation*}
    Note that, so long as $\alpha(\Lambda_\tau) \ge 1$, we have
    \begin{equation*}
        \alpha(\Lambda_\tau) = \alpha_1(\Lambda_\tau) = (\Im \tau)^{1/2}.
    \end{equation*}
    Thus, in the natural area form $\frac{\dx\dy}{y^2}$ on $\H^2$, we have
    \begin{equation*}
        \int_{\H^2/\Gamma} \ind_{\alpha(\Lambda_\tau) \ge \epsilon^{-1}}\frac{\dx\dy}{y^2} = \int_{-1/2}^{1/2}\int_{y=\epsilon^{-2}}^\infty \frac{\dx\dy}{y^2} = \epsilon^2.
    \end{equation*}
    Now, we note that $\int_{\Fc} \frac{\dx\dy}{y^2} = \frac{\pi}{3}$, and so
    \begin{equation*}
        \mu(\Ls_2^{\thin,\epsilon}) = \frac{3}{\pi}\epsilon^2 = \frac{1}{2\xi(2)}\epsilon^2 = C_{1,1}\epsilon^2.
    \end{equation*}
\end{proof}

\begin{lemma}\label{lemma:cuspvolumelowerbound3+}
    Suppose $N \ge 3$. Then
    \begin{equation*}
        \mu\left( \lattice^{\thin,\epsilon,1} \right) \ge (1-o_{N \to \infty}(1))C_{1,N-1}\epsilon^N.
    \end{equation*}
\end{lemma}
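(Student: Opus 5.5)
The idea is to bound the $1$-cusp from below by the expected number of short primitive vectors, and to control the overcount with Rogers' second moment formula. Set $B = \B_\epsilon^N$ and $X = \hat{\ind}_B^{\SV}$, so that $X(\Lambda)$ counts the primitive vectors of $\Lambda$ of length $<\epsilon$. Since $\Lambda \in \lattice^{\thin,\epsilon,1}$ exactly when $X(\Lambda) \ge 1$, and $X$ is always even (the vectors come in pairs $\pm\vbf$), we have
\begin{equation*}
    \mu\left(\lattice^{\thin,\epsilon,1}\right) = \P(X \ge 2) \ge \frac{1}{2}\E[X] - \frac{1}{2}\E[X\ind_{X \ge 4}].
\end{equation*}
This rests on the inequality $\ind_{X\ge 2} \ge \frac{X}{2} - \frac{X}{2}\ind_{X\ge 4}$, which holds pointwise because $X \in \{0,2\}$ on the complement of $\{X\ge 4\}$.

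Siegel's mean value theorem (Theorem \ref{thm:Siegelsformula}) gives the main term:
\begin{equation*}
    \frac{1}{2}\E[X] = \frac{\omega_N\epsilon^N}{2\zeta(N)} = \frac{\pi^{N/2}}{N\,\Gamma(N/2)\zeta(N)}\epsilon^N .
\end{equation*}
We then check that this equals $C_{1,N-1}\epsilon^N$. Indeed $C_{1,N-1} = c_{N-1}/(Nc_N) = 1/(N\xi(N))$, and $1/(N\xi(N)) = \pi^{N/2}/(N\Gamma(N/2)\zeta(N))$. So the main term matches the claimed constant exactly.

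For the error term, use $X\ind_{X\ge 4} \le X(X-2)/2$, which is valid because $X$ is even: if $X \ge 4$ then $(X-2)/2 \ge 1$. Rogers' formula (Theorem \ref{thm:Rogersformula}) applies since $N \ge 3$. For the symmetric set $B$ it gives
\begin{equation*}
    \E[X^2] = \E[X]^2 + 2\E[X].
\end{equation*}
Hence $\E[X(X-2)] = \E[X]^2$, and so
\begin{equation*}
    \P(X\ge 2) \ge \tfrac{1}{2}\E[X] - \tfrac{1}{4}\E[X]^2 = \tfrac12\E[X]\left(1 - \tfrac12\E[X]\right).
\end{equation*}
It remains to make the relative error $\frac12\E[X] = \omega_N\epsilon^N/(2\zeta(N))$ tend to $0$ as $N \to \infty$. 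For $\epsilon < 1$ we have $\omega_N\epsilon^N \le \omega_N$, and $\omega_N = \pi^{N/2}/\Gamma(N/2+1) \to 0$ superexponentially. The error is therefore $o_{N\to\infty}(1)$ uniformly in $\epsilon \in (0,1)$, which gives $(1-o(1))C_{1,N-1}\epsilon^N$.

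The main obstacle is the bookkeeping around primitivity and parity. One must use the primitive Siegel transform so that Rogers' formula takes exactly the stated form (the $\tfrac{1}{\zeta(N)}\int f(x)f(-x)$ term produces the factor $2\E[X]$). One must also exploit $X\in 2\Z$; a naive second-moment bound $\P(X>0)\ge \E[X]^2/\E[X^2]$ would only give the weaker constant $\E[X]/(\E[X]+2)$. The parity refinement is what recovers the sharp leading constant. If the Rogers formula is only available for $C_c$ functions, we approximate $\ind_B$ monotonically, as in the remark following Siegel's theorem.
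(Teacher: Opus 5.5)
Your proof is correct. It uses the same two inputs as the paper's proof: Siegel's mean value theorem and Rogers' formula applied to $X=|\Lambda_{\prim}\cap\B_\epsilon^N|$, which give $\E[X^2]=\E[X]^2+2\E[X]$. Where you differ is in how these become a probability bound. The paper uses the plain Cauchy--Schwarz (second moment) bound $\P(X>0)\ge \E[X]^2/\E[X^2]=\E[X]/(2+\E[X])$. You instead use parity of $X$ and the Bonferroni-type inequality $\ind_{X\ge 2}\ge \frac{X}{2}-\frac{X(X-2)}{4}$.

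Your closing remark, that the naive second-moment bound gives a weaker constant, is mistaken. Write $u=\frac12\E[X]=C_{1,N-1}\epsilon^N$. The paper's bound is then $\frac{u}{1+u}$ and yours is $u(1-u)$, and
\[
\frac{u}{1+u}-u(1-u)=\frac{u^3}{1+u}\ge 0 .
\]
So Cauchy--Schwarz already gives the sharp leading constant $(1-o(1))u$, and is in fact marginally stronger than your estimate. The factor $2$ in $\E[X]/(2+\E[X])$ already encodes the pairing $\pm\vbf$: it enters through the $\int f(x)f(-x)\dx$ term of Rogers' formula. The parity refinement is therefore harmless but unnecessary.

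Your write-up does add two things the paper leaves implicit. First, you explicitly check that $\frac12\E[X]=\frac{\pi^{N/2}}{N\Gamma(N/2)\zeta(N)}\epsilon^N=\frac{\epsilon^N}{N\xi(N)}=C_{1,N-1}\epsilon^N$; the paper's proof stops at the middle expression. Second, you observe that the $o_{N\to\infty}(1)$ is uniform because $\epsilon<1$.
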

\begin{proof}
    Note that $\alpha_1(\Lambda) \ge \epsilon^{-1}$ if and only if $\Lambda_{\prim} \cap \B_\epsilon^N \ne \emptyset$. Let us define the random variable $X = |\Lambda_{\prim} \cap \B_\epsilon^N|$; this is the primitive Siegel transform of $\ind_{\B_\epsilon}$. By the primitive version of Siegel's mean value theorem and Rogers' second moment formula, we have
    \begin{equation*}
        \E[X] = \frac{1}{\zeta(N)}\Vol(\B_\epsilon) \quad \mathrm{and} \quad \E[X^2] = \frac{1}{\zeta(N)^2}\Vol(\B_\epsilon)^2 + \frac{2}{\zeta(N)}\Vol(\B_\epsilon).
    \end{equation*}
    Thus, we have
    \begin{align*}
        \P(X > 0) \ge \frac{\E[X]^2}{\E[X^2]} &= \frac{\zeta(N)^{-2}\Vol(\B_\epsilon)^2}{\zeta(N)^{-2}\Vol(\B_\epsilon)^2 + 2\zeta(N)^{-1}\Vol(\B_\epsilon)} \\
        &= \left( 1 + \frac{2\zeta(N)}{\Vol(\B_\epsilon)} \right)^{-1} \\
        &= \left( 1 + \frac{N \cdot \Gamma(N/2)\zeta(N)}{\pi^{N/2}}\epsilon^{-N} \right)^{-1} \\
        &\ge (1-o_{N \to \infty}(1))\frac{\pi^{N/2}}{N \cdot \Gamma(N/2)\zeta(N)}\epsilon^N.
    \end{align*}
\end{proof}

\subsection{Proof of Theorem \ref{thm:cuspvolumequant}}

Combining the previous results, we obtain the main theorem of this section.

\begin{proof}[Proof of Theorem \ref{thm:cuspvolumequant}]
    By Corollary \ref{cor:cuspvolumeupper}, we have
    \begin{equation*}
        \mu\left( \lattice^{\thin,\epsilon} \right) \lesssim \frac{\pi^{N/2}}{N \cdot \Gamma(N/2)\zeta(N)}\epsilon^N,
    \end{equation*}
    and by Lemmas \ref{lemma:cuspvolumelowerbound2} and \ref{lemma:cuspvolumelowerbound3+}, we have
    \begin{equation*}
        \mu\left( \lattice^{\thin,\epsilon} \right) \ge \mu\left( \lattice^{\thin,\epsilon,1} \right) \gtrsim \frac{\pi^{N/2}}{N \cdot \Gamma(N/2)\zeta(N)}\epsilon^N.
    \end{equation*}
    Combining these inequalities yields the theorem.
\end{proof}

\begin{rmk}
    It is likely that one can show the correct leading asymptotic is $2$ in Theorem \ref{thm:cuspvolumequant}, i.e., that
    \begin{equation*}
        \mu\left( \lattice^{\thin,\epsilon} \right) = (2-o_{N \to \infty}(1)) \frac{\pi^{N/2}}{N\cdot\Gamma(N/2)\zeta(N)}\epsilon^N.
    \end{equation*}
    To see why, note that the map $\Lambda \mapsto \Lambda^\vee$ is a measure-preserving involution on $\lattice$ which sends $\lattice^{\thin,\epsilon,p}$ to $\lattice^{\thin,\epsilon,q}$, and we should expect
    \begin{equation*}
        \mu\left( \lattice^{\thin,\epsilon,p} \cap \lattice^{\thin,\epsilon,q} \right) \ll \mu\left( \lattice^{\thin,\epsilon,p}\right),
    \end{equation*}
    whenever $0 < \epsilon < 1$, since the systole of a random element of $\lattice$ is length $\asymp \sqrt{N} \gg 1$.
\end{rmk}

%% file: references.bib
@article{Cai,
  author    = {Cairo, H.},
  title     = {A Counterexample to the {M}izohata-{T}akeuchi Conjecture},
  journal   = {arXiv preprint arXiv:2502.06137},
  year      = {2025},
  month     = {mar},
  doi       = {10.48550/arXiv.2502.06137},
  url       = {https://arxiv.org/abs/2502.06137},
  shorthand = {Cai25}
}

@article{CaiZha,
  author    = {Cairo, H. and Zhang, R.},
  title     = {Power loss for the {M}izohata-{T}akeuchi conjecture on ${C}^k$ convex hypersurfaces},
  journal   = {arXiv preprint arXiv:2512.08064},
  year      = {2025},
  month     = {dec},
  doi       = {10.48550/arXiv.2512.08064},
  url       = {https://arxiv.org/abs/2512.08064},
  shorthand = {CZ25}
}

@article{CarIliWan,
  author    = {Carbery, A. and Iliopoulou, M. and Wang, H.},
  title     = {Some sharp inequalities of {M}izohata--{T}akeuchi-type},
  journal   = {Revista Matemática Iberoamericana},
  volume    = {40},
  number    = {4},
  pages     = {1387--1418},
  year      = {2024},
  month     = {feb},
  doi       = {10.4171/rmi/1463},
  url       = {https://ems.press/doi/10.4171/rmi/1463},
  issn      = {0213-2230, 2235-0616},
  shorthand = {CIW24},
}

@article{Sie,
shorthand = {Sie45},
  author    = {Siegel, C. L.},
  title     = {A Mean Value Theorem in Geometry of Numbers},
  journal   = {The Annals of Mathematics},
  volume    = {46},
  number    = {2},
  pages     = {340},
  year      = {1945},
  month     = {apr},
  doi       = {10.2307/1969027},
  url       = {https://www.jstor.org/stable/1969027},
  issn      = {0003486X}
}

@article{Rog,
shorthand = {Rog56},
  author    = {Rogers, C. A.},
  title     = {The Number of Lattice Points in a Set},
  journal   = {Proceedings of the London Mathematical Society},
  volume    = {s3-6},
  number    = {2},
  pages     = {305--320},
  year      = {1956},
  month     = {apr},
  doi       = {10.1112/plms/s3-6.2.305},
  url       = {http://doi.wiley.com/10.1112/plms/s3-6.2.305},
  issn      = {00246115},
  language  = {en}
}

@article{AthMar,
shorthand = {AM09},
  author    = {Athreya, J. and Margulis, G. A.},
  title     = {Logarithm laws for unipotent flows, {I}},
  journal   = {Journal of Modern Dynamics},
  volume    = {3},
  number    = {3},
  pages     = {359--378},
  year      = {2009}
}

@inproceedings{Ran,
shorthand = {Ran70},
  author    = {Randol, B.},
  title     = {A group-theoretic lattice-point problem},
  booktitle = {Problems in Analysis: A Symposium in Honor of Salomon Bochner},
  editor    = {Gunning, Robert C.},
  series    = {Princeton Mathematical Series},
  volume    = {31},
  pages     = {291--295},
  publisher = {Princeton University Press},
  address   = {Princeton, NJ},
  year      = {1970}
}

@inproceedings{Ste,
shorthand = {Ste79},
  author    = {Stein, E. M.},
  title     = {Some problems in harmonic analysis},
  booktitle = {Harmonic Analysis in Euclidean Spaces, Part 1},
  editor    = {Weiss, Guido and Wainger, Stephen},
  series    = {Proceedings of Symposia in Pure Mathematics},
  volume    = {35},
  pages     = {3--20},
  publisher = {American Mathematical Society},
  address   = {Providence, RI},
  year      = {1979}
}

@book{Lan,
shorthand = {Lan85},
  author    = {Lang, S.},
  title     = {$\mathrm{SL}_2(\mathbf{R})$},
  series    = {Graduate Texts in Mathematics},
  volume    = {105},
  publisher = {Springer},
  address   = {New York, NY},
  year      = {1985},
  doi       = {10.1007/978-1-4612-5142-2},
  url       = {https://link.springer.com/10.1007/978-1-4612-5142-2},
  isbn      = {9781461295815}
}

@article{Sch1,
shorthand = {Sch60},
  author    = {Schmidt, W. M.},
  title     = {A metrical theorem in geometry of numbers},
  journal   = {Transactions of the American Mathematical Society},
  volume    = {95},
  number    = {3},
  pages     = {516--529},
  year      = {1960},
  doi       = {10.1090/S0002-9947-1960-0117222-9},
  url       = {https://www.ams.org/tran/1960-095-03/S0002-9947-1960-0117222-9/},
  issn      = {0002-9947, 1088-6850},
  language  = {en}
}

@article{FaiHan,
shorthand = {FH25},
  author    = {Fairchild, S. and Han, J.},
  title     = {Mean value theorems for the ${S}$-arithmetic primitive {S}iegel transforms},
  journal   = {Journal of Modern Dynamics},
  volume    = {21},
  number    = {0},
  pages     = {645--692},
  year      = {2025},
  month     = {nov},
  doi       = {10.3934/jmd.2025015},
  url       = {https://www.aimsciences.org/en/article/doi/10.3934/jmd.2025015},
  issn      = {1930-5311},
  language  = {en}
}

@article{KelYu,
shorthand = {KY18},
  author    = {Kelmer, D. and Yu, S.},
  title     = {The second moment of the {S}iegel transform in the space of symplectic lattices},
  journal   = {arXiv preprint arXiv:1802.09645},
  year      = {2018},
  month     = {feb},
  doi       = {10.48550/arXiv.1802.09645},
  url       = {https://arxiv.org/abs/1802.09645}
}

@article{Vee,
shorthand = {Vee98},
  author    = {Veech, W. A.},
  title     = {Siegel Measures},
  journal   = {The Annals of Mathematics},
  volume    = {148},
  number    = {3},
  pages     = {895},
  year      = {1998},
  month     = {nov},
  doi       = {10.2307/121033},
  url       = {https://www.jstor.org/stable/121033},
  issn      = {0003486X}
}

@article{AthCheMas,
shorthand = {ACM19},
  author    = {Athreya, J. and Cheung, Y. and Masur, H.},
  title     = {Siegel-{V}eech transforms are in ${L}^2$},
  journal   = {arXiv preprint arXiv:1711.08537},
  year      = {2019},
  month     = {june},
  doi       = {10.48550/arXiv.1711.08537},
  url       = {https://arxiv.org/abs/1711.08537}
}

@article{Tak1,
shorthand = {Tak74},
  author    = {Takeuchi, J.},
  title     = {A necessary condition for the well-posedness of the {C}auchy problem for a certain class of evolution equations},
  journal   = {Proceedings of the Japan Academy},
  volume    = {50},
  number    = {2},
  pages     = {133--137},
  year      = {1974},
  publisher = {The Japan Academy}
}

@article{Tak2,
shorthand = {Tak80},
  author    = {Takeuchi, J.},
  title     = {On the {C}auchy problem for some non-{K}owalewskian equations with distinct characteristic roots},
  journal   = {Kyoto Journal of Mathematics},
  volume    = {20},
  number    = {1},
  year      = {1980},
  month     = {jan},
  doi       = {10.1215/kjm/1250522323},
  url       = {https://projecteuclid.org/journals/kyoto-journal-of-mathematics/volume-20/issue-1/On-the-Cauchy-problem-for-some-non-kowalewskian-equations-with/10.1215/kjm/1250522323.full},
  issn      = {2156-2261}
}

@book{Miz,
shorthand = {Miz85},
  author    = {Mizohata, S.},
  title     = {On the {C}auchy problem},
  series    = {Notes and Reports in Mathematics in Science and Engineering},
  volume    = {3},
  publisher = {Science Press and Academic Press},
  address   = {Beijing and Orlando, FL},
  year      = {1985}
}

@article{BenGutNakOli,
shorthand = {BGNO25},
  author    = {Bennett, J. and Gutiérrez, S. and Nakamura, S. and Oliveira, I.},
  title     = {A phase-space approach to weighted {F}ourier extension inequalities},
  journal   = {Forum of Mathematics, Sigma},
  volume    = {13},
  pages     = {e181},
  year      = {2025},
  month     = {jan},
  doi       = {10.1017/fms.2025.10127},
  url       = {https://www.cambridge.org/core/journals/forum-of-mathematics-sigma/article/phasespace-approach-to-weighted-fourier-extension-inequalities/AE5194721B554E6852CDB61655A370A9},
  issn      = {2050-5094},
  language  = {en}
}

@article{BenCarTao,
shorthand = {BCT06},
  author    = {Bennett, J. and Carbery, A. and Tao, T.},
  title     = {On the multilinear restriction and {K}akeya conjectures},
  journal   = {Acta Mathematica},
  volume    = {196},
  number    = {2},
  pages     = {261--302},
  year      = {2006},
  month     = {jan},
  doi       = {10.1007/s11511-006-0006-4},
  url       = {https://projecteuclid.org/journals/acta-mathematica/volume-196/issue-2/On-the-multilinear-restriction-and-Kakeya-conjectures/10.1007/s11511-006-0006-4.full},
  issn      = {0001-5962, 1871-2509},
  language  = {en}
}

@incollection{EskMoz,
shorthand = {EM22},
  title={Margulis functions and their applications},
  author={Eskin, A. and Mozes, S.},
  booktitle={Dynamics, Geometry, Number Theory: The Impact of Margulis on Modern Mathematics},
  editor={Fisher, David and Kleinbock, Dmitry and Soifer, Gregory},
  pages={342--361},
  year={2022},
  publisher={University of Chicago Press}
  }

@article{EskMirMoh,
shorthand = {EMM15},
title={Isolation, equidistribution, and orbit closures for the $\mathrm{SL}_2(\mathbb{R})$ action on moduli space}, ISSN={0003-486X}, url={https://annals.math.princeton.edu/2015/182-2/p07}, DOI={10.4007/annals.2015.182.2.7}, journal={Annals of Mathematics}, author={Eskin, A. and Mirzakhani, M. and Mohammadi, A.}, year={2015}, month=sept, pages={673–721}, language={en} }

@book{Cas,
shorthand = {Cas59},
  author    = {Cassels, J. W. S.},
  title     = {An Introduction to the Geometry of Numbers},
  publisher = {Springer},
  address   = {Berlin, Heidelberg},
  year      = {1959},
  doi       = {10.1007/978-3-642-62035-5},
  url       = {https://link.springer.com/10.1007/978-3-642-62035-5},
  isbn      = {9783540617884}
}

@article{IosSawSee,
shorthand = {ISS07},
  author    = {Iosevich, A. and Sawyer, E. T. and Seeger, A.},
  title     = {Mean lattice point discrepancy bounds, {II}: Convex domains in the plane},
  journal   = {Journal d’Analyse Mathématique},
  volume    = {101},
  number    = {1},
  pages     = {25--63},
  year      = {2007},
  month     = {mar},
  doi       = {10.1007/s11854-007-0002-4},
  url       = {https://doi.org/10.1007/s11854-007-0002-4},
  issn      = {1565-8538},
  language  = {en}
}

@article{BarRuiVeg,
shorthand = {BRV97},
  author    = {Barcelo, J. A. and Ruiz, A. and Vega, L.},
  title     = {Weighted Estimates for the {H}elmholtz Equation and Some Applications},
  journal   = {Journal of Functional Analysis},
  volume    = {150},
  pages     = {356--382},
  year      = {1997},
  month     = {nov},
  doi       = {10.1006/jfan.1997.3131},
  url       = {https://www.sciencedirect.com/science/article/pii/S0022123697931311},
  issn      = {0022-1236}
}

@article{Sch,
shorthand = {Sch58},
  author    = {Schmidt, W.},
  title     = {On the Convergence of Mean Values Over Lattices},
  journal   = {Canadian Journal of Mathematics},
  volume    = {10},
  pages     = {103--110},
  year      = {1958},
  month     = {jan},
  doi       = {10.4153/CJM-1958-013-2},
  url       = {https://www.cambridge.org/core/journals/canadian-journal-of-mathematics/article/on-the-convergence-of-mean-values-over-lattices/EC114326836609DCC60FFA8879B2FAAD},
  issn      = {0008-414X, 1496-4279},
  language  = {en}
}

@article{KleMar1,
shorthand = {KM99},
  author    = {Kleinbock, D. Y. and Margulis, G. A.},
  title     = {Logarithm laws for flows on homogeneous spaces},
  journal   = {Inventiones mathematicae},
  volume    = {138},
  number    = {3},
  pages     = {451--494},
  year      = {1999},
  month     = {dec},
  doi       = {10.1007/s002220050350},
  url       = {https://link.springer.com/10.1007/s002220050350},
  issn      = {0020-9910, 1432-1297},
  language  = {en}
}

@article{Hen,
shorthand = {Hen02},
  author    = {Henk, M.},
  title     = {Successive Minima and Lattice Points},
  journal   = {arXiv preprint arXiv:math/0204158},
  year      = {2002},
  month     = {apr},
  doi       = {10.48550/arXiv.math/0204158},
  url       = {https://arxiv.org/abs/math/0204158}
}

@article{Ber,
shorthand = {Ber12},
  author    = {Beresnevich, V.},
  title     = {Rational points near manifolds and metric Diophantine approximation},
  journal   = {Annals of Mathematics},
  volume    = {175},
  number    = {1},
  pages     = {187--235},
  year      = {2012},
  month     = {jan},
  doi       = {10.4007/annals.2012.175.1.5},
  url       = {http://annals.math.princeton.edu/2012/175-1/p05},
  issn      = {0003-486X},
  language  = {en}
}

@misc{CarLiPanYun,
shorthand = {CLPY25},
  author    = {Carbery, A. and Li, Z. K. and Pang, Y. and Yung, P-L},
  title     = {A weighted formulation of refined decoupling and inequalities of {M}izohata-{T}akeuchi-type for the moment curve},
  howpublished = {arXiv preprint arXiv:2510.04345},
  year      = {2025},
  month     = {oct},
  url       = {https://arxiv.org/abs/2510.04345},
  language  = {en}
}

@article{EinKatLin,
shorthand = {EKL06},
  author    = {Einsiedler, M. and Katok, A. and Lindenstrauss, E.},
  title     = {Invariant measures and the set of exceptions to {L}ittlewood’s conjecture},
  journal   = {Annals of Mathematics},
  volume    = {164},
  number    = {2},
  pages     = {513--560},
  year      = {2006},
  month     = {sept},
  doi       = {10.4007/annals.2006.164.513},
  url       = {http://annals.math.princeton.edu/2006/164-2/p04},
  issn      = {0003-486X},
  language  = {en}
}

@article{Hua,
shorthand = {Hua20},
  author    = {Huang, J.-J.},
  title     = {The density of rational points near hypersurfaces},
  journal   = {Duke Mathematical Journal},
  volume    = {169},
  number    = {11},
  year      = {2020},
  month     = {aug},
  doi       = {10.1215/00127094-2020-0004},
  url       = {https://projecteuclid.org/journals/duke-mathematical-journal/volume-169/issue-11/The-density-of-rational-points-near-hypersurfaces/10.1215/00127094-2020-0004.full},
  issn      = {0012-7094}
}

@article{KleMar,
shorthand = {KM98},
  author    = {Kleinbock, D. Y. and Margulis, G. A.},
  title     = {Flows on Homogeneous Spaces and Diophantine Approximation on Manifolds},
  journal   = {The Annals of Mathematics},
  volume    = {148},
  number    = {1},
  pages     = {339--360},
  year      = {1998},
  doi       = {10.2307/120997},
  url       = {https://www.jstor.org/stable/120997},
  issn      = {0003486X}
}

@article{SchSriTec,
shorthand = {SST23},
  author    = {Schindler, D. and Srivastava, R. and Technau, N.},
  title     = {Rational Points Near Manifolds, Homogeneous Dynamics, and Oscillatory Integrals},
  journal   = {arXiv preprint arXiv:2310.03867},
  year      = {2023},
  month     = {oct},
  doi       = {10.48550/arXiv.2310.03867},
  url       = {https://arxiv.org/abs/2310.03867}
}

@article{BerKleMar,
shorthand = {BKM02},
  author    = {Bernik, V. and Kleinbock, D. and Margulis, G. A.},
  title     = {Khintchine-type theorems on manifolds: the convergence case for standard and multiplicative versions},
  journal   = {Internat. Math. Res. Notices. 2001, no. 9, 453--486},
  year      = {2002},
  doi       = {10.48550/arxiv.math/0210298}
}

@article{EskMcM, 
shorthand = {EM93},
    title={Mixing, counting, and equidistribution in Lie groups}, 
    volume={71}, 
    ISSN={0012-7094, 1547-7398}, 
    url={https://projecteuclid.org/journals/duke-mathematical-journal/volume-71/issue-1/Mixing-counting-and-equidistribution-in-Lie-groups/10.1215/S0012-7094-93-07108-6.full}, 
    DOI={10.1215/S0012-7094-93-07108-6}, 
    number={1}, 
    journal={Duke Mathematical Journal}, 
    author={Eskin, A. and McMullen, C.}, 
    year={1993}, month=july, 
    pages={181–209}, 
    language={en} 
}

@book{Mor, 
shorthand = {Mor15},
series={Open textbook library}, title={Introduction to Arithmetic Groups}, ISBN={9780986571602}, abstractNote={Introduction to Arithmetic Groups}, publisher={Deductive Press}, author={Morris, D. W.}, year={2015}, collection={Open textbook library}, language={eng} }

@article{Doz, title={Measure bound for translation surfaces with short saddle connections}, volume={33}, ISSN={1420-8970}, url={https://doi.org/10.1007/s00039-023-00636-9}, DOI={10.1007/s00039-023-00636-9}, number={2}, journal={Geometric and Functional Analysis}, author={Dozier, Benjamin}, year={2023}, month=apr, pages={421–467}, language={en} }

@article{EskMarMoz, title={Upper Bounds and Asymptotics in a Quantitative Version of the Oppenheim Conjecture}, volume={147}, ISSN={0003486X}, url={https://www.jstor.org/stable/120984?origin=crossref}, DOI={10.2307/120984}, number={1}, journal={The Annals of Mathematics}, author={Eskin, A. and Margulis, G. A. and Mozes, S.}, year={1998}, month=jan, pages={93} }

@article{Smi,
shorthand = {Smi25},
title={Lattice points in thickened parabolas and rational points near hypersurfaces}, url={http://arxiv.org/abs/2512.00202}, DOI={10.48550/arXiv.2512.00202}, note={arXiv:2512.00202}, number={arXiv:2512.00202}, publisher={arXiv}, author={Smith, A.}, year={2025}, month=nov }

@article{Sri,
shorthand = {Sri25},
title={Counting rational points in non-isotropic neighborhoods of manifolds}, volume={478}, ISSN={0001-8708}, url={https://www.sciencedirect.com/science/article/pii/S0001870825002920}, DOI={10.1016/j.aim.2025.110394}, journal={Advances in Mathematics}, author={Srivastava, R.}, year={2025}, month=oct, pages={110394} }

@article{IosTay,
shorthand = {IT11},
title={Lattice points close to families of surfaces, non-isotropic dilations and regularity of generalized Radon transforms}, url={http://arxiv.org/abs/1103.1670}, DOI={10.48550/arXiv.1103.1670}, note={arXiv:1103.1670}, number={arXiv:1103.1670}, publisher={arXiv}, author={Iosevich, A. and Taylor, K.}, year={2011}, month=mar }

@article{Kim, title={Adelic Rogers integral formula}, url={http://arxiv.org/abs/2205.03138}, DOI={10.48550/arXiv.2205.03138}, note={arXiv:2205.03138}, number={arXiv:2205.03138}, publisher={arXiv}, author={Kim, S.}, year={2023}, month=aug }

@article{Pau, title={Comparison of volumes of {S}iegel sets and fundamental domains for $\mathrm{SL}_n(\mathbb{Z})$}, volume={199}, ISSN={1572-9168}, url={https://doi.org/10.1007/s10711-018-0350-5}, DOI={10.1007/s10711-018-0350-5}, number={1}, journal={Geometriae Dedicata}, author={Paula, G. T.}, year={2019}, month=apr, pages={291–306}, language={en} }

@book{Wei, address={Boston, MA}, title={Adeles and Algebraic Groups}, rights={http://www.springer.com/tdm}, ISBN={9781468491586}, url={http://link.springer.com/10.1007/978-1-4684-9156-2}, DOI={10.1007/978-1-4684-9156-2}, publisher={Birkhäuser Boston}, author={Weil, A.}, year={1982}, language={en} }

@online{Gar,
    title={Volumes of $\mathrm{SL}_n(\mathbb{Z})\backslash\mathrm{SL}_n(\mathbb{R})$ and $\mathrm{Sp}_n(\mathbb{Z})\backslash\mathrm{Sp}_n(\mathbb{R})$},
    year={2014},
    url={http://www-users.math.umn.edu/∼garrett/m/v/volumes.pdf},
    author={Garrett, P.},
    note={Accessed 05/22/2026}
}

@article{EinLinMicVen, title={The distribution of closed geodesics on the modular surface, and Duke’s theorem}, volume={58}, ISSN={0013-8584, 2309-4672}, url={https://ems.press/doi/10.4171/lem/58-3-2}, DOI={10.4171/lem/58-3-2}, number={3}, journal={L’Enseignement Mathématique}, author={Einsiedler, M. and Lindenstrauss, E. and Michel, P. and Venkatesh, A.}, year={2012}, month=dec, pages={249–313} }

@article{KadKleLinMar, title={Singular systems of linear forms and non-escape of mass in the space of lattices}, volume={133}, ISSN={0021-7670, 1565-8538}, url={http://link.springer.com/10.1007/s11854-017-0033-4}, DOI={10.1007/s11854-017-0033-4}, number={1}, journal={Journal d’Analyse Mathématique}, author={Kadyrov, S. and Kleinbock, D. and Lindenstrauss, E. and Margulis, G. A.}, year={2017}, month=oct, pages={253–277}, language={en} }

@article{Mor1, title={Bounding entropy for one-parameter diagonal flows on $\mathrm{SL}_{\mathtt{d}}(\mathbb{R})/\mathrm{SL}_{\mathtt{d}}(\mathbb{Z})$ using linear functionals}, ISSN={1435-9855}, url={https://ems.press/journals/jems/articles/14298574}, DOI={10.4171/jems/1592}, abstractNote={R. Mor}, journal={Journal of the European Mathematical Society}, author={Mor, Ron}, year={2025}, month=feb, language={en} }

@article{VDC, title={Verallgemeinerung einer Mordellschen Beweismethode in der Geometrie der Zahlen}, volume={1}, ISSN={0065-1036, 1730-6264}, url={http://www.impan.pl/get/doi/10.4064/aa-1-1-62-66}, DOI={10.4064/aa-1-1-62-66}, number={1}, journal={Acta Arithmetica}, author={Van Der Corput, J.}, year={1935}, pages={62–66}, language={en} }

@book{Zim, address={Boston, MA}, title={Ergodic Theory and Semisimple Groups}, rights={http://www.springer.com/tdm}, ISBN={9781468494907}, url={http://link.springer.com/10.1007/978-1-4684-9488-4}, DOI={10.1007/978-1-4684-9488-4}, publisher={Birkhäuser Boston}, author={Zimmer, Robert J.}, year={1984}, language={en} }

@article{EinKat, title={Invariant measures on $G/\Gamma$ for split simple Lie groups $G$}, volume={56}, ISSN={0010-3640, 1097-0312}, url={https://onlinelibrary.wiley.com/doi/10.1002/cpa.10092}, DOI={10.1002/cpa.10092}, number={8}, journal={Communications on Pure and Applied Mathematics}, author={Einsiedler, Manfred and Katok, Anatole}, year={2003}, month=aug, pages={1184–1221}, language={en} }
